\newcommand{\RomanNumeralCaps}[1]
    {\MakeUppercase{\romannumeral #1}}
\newtheorem{theorem}{Theorem}[section]
\newtheorem{corollary}[theorem] {Corollary}
\newtheorem{definition}[theorem]{Definition}
\newtheorem{example}[theorem]{Example}
\newtheorem{lemma}[theorem]{Lemma}
\newtheorem{proposition}[theorem]{Proposition}
\newtheorem{remark}[theorem]{Remark}
\newcommand{\TC}{\mathrm{TC}}
\newcommand{\D}{D}
\newcommand{\ct}{\mathrm{cat}}
\newcommand{\sct}{\mathrm{secat}}
\newcolumntype{x}[1]{>{\centering\arraybackslash}p{#1}}
\begin{document}
\title[]{Equivariant homotopic distance}

\author[N. Daundkar]{Navnath Daundkar}
\address{Department of Mathematics, Indian Institute of Science Education and Research Pune, India.}
\email{navnathdaundkar23@gmail.com \\ navnath.daundkar@acads.iiserpune.ac.in}
\author[]{J.M. Garc\'ia-Calcines}
\address{Departamento de Matem\'aticas, Estad\'istica e Investigaci\'on Operativa, Universidad de La Laguna, Avenida Astrof\'isico Francisco S\'anchez S/N, 38200 La Laguna, Spain.}
\email{jmgarcal@ull.edu.es}

\thanks{}

\begin{abstract} 
We introduce and study the notion of \emph{equivariant homotopic distance} $D_G(f,g)$ between $G$-maps $f,g \colon X \to Y$. We show that the equivariant Lusternik--Schnirelmann category and the equivariant topological complexity are particular cases of this notion. This invariant also connects naturally with the equivariant sectional category. What makes $D_G$ distinctive, however, is that it provides a flexible framework centered on pairs of maps, within which one can derive results that are not immediate from the general setting.

In particular, we establish its basic properties, including homotopy invariance and a categorical proof of the triangle inequality valid in the equivariant context. We also obtain cohomological and dimension--connectivity bounds, and analyze structural applications to Hopf $G$--spaces and equivariant fibrations.
\end{abstract}

\keywords{equivariant homotopic distance, equivariant Lusternik-Schnirelmann category, equivariant topological complexity, Hopf $G$-spaces}
\subjclass[2020]{55M30, 55S40, 55R10, 55R91}
\maketitle

\section{Introduction}
The study of numerical homotopy invariants has long provided insight into the qualitative complexity of spaces and maps. Among them, the Lusternik–Schnirelmann (LS) category, introduced in the 1930s, and the sectional category of a map, developed later by Švarc \cite{Sva}, Berstein and Ganea \cite{B-G}, stand out as central tools with applications ranging from critical point theory to robotics. A significant leap came with Farber’s introduction of topological complexity \cite{F}, which reframed the sectional category in terms of the free path fibration to quantify the complexity of motion planning algorithms.

The presence of symmetries naturally led to equivariant versions of these invariants. Fadell \cite{Fadelleqcat} defined the equivariant LS category, later studied by Marzantowicz \cite{Eqlscategory}, while Colman and Grant \cite{EqTC} introduced equivariant topological complexity as the equivariant sectional category of equivariant fibrations. These developments extended the reach of the LS category and topological complexity to the context of $G$-spaces under the action of compact Lie groups.

The homotopic distance between continuous maps, introduced by
Maciás--Virgos and Mosquera--Lois~\cite{macias2022homotopic}, provides a
unifying framework for several classical invariants in homotopy theory,
including the Lusternik--Schnirelmann category and topological
complexity. 
In this paper we propose the \emph{equivariant homotopic distance}
$D_G(f,g)$ for $G$--maps $f,g\colon X\to Y$. The definition is a natural
extension of the nonequivariant case, and Theorem~\ref{thm: DG equals
secat} shows that $D_G(f,g)$ can be expressed as a particular instance
of the equivariant sectional category $\sct_G$. This places $D_G$
within a well--established framework, and many basic properties follow
as natural analogues of those in the nonequivariant setting. At the same
time, phrasing the theory in terms of $D_G$ provides a convenient
language centered on pairs of maps, in which several structural features
of the equivariant situation become transparent.

\medskip

The main contributions of this paper include a categorical proof of the \emph{triangle inequality} in the equivariant setting, which avoids delicate covering arguments. We establish cohomological and dimension--connectivity bounds, including Borel--cohomology estimates that can be strictly sharper than the corresponding orbit space bounds. Further, we apply these results to \emph{Hopf $G$--spaces}, obtaining the equality $\TC_G(X) = \ct_G(X)$ under natural hypotheses, as an equivariant analogue of the theorem of Lupton--Scherer in the nonequivariant case. Finally, we extend our results to \emph{equivariant fibrations}, thereby generalizing inequalities of Varadarajan \cite{V} and Farber-Grant \cite{F-G}.

\medskip

Taken together, these results indicate that the equivariant homotopic
distance $D_G$ not only extends the nonequivariant theory but also
offers a flexible and effective language for analyzing equivariant
Lusternik--Schnirelmann category and topological complexity.

The paper is organized as follows. In \Cref{sec:eq-secat} we recall the basic notions of equivariant sectional category. \Cref{sec: eq homotopic distance} introduces the equivariant homotopic distance and establishes its relationship with equivariant sectional category, together with cohomological and dimensional bounds. \Cref{sec: properties} is devoted to fundamental properties, while \Cref{sec: triangle inequality} proves the triangle inequality by means of a categorical approach. In \Cref{sec: Hopg G-space} we apply the theory to Hopf $G$-spaces, obtaining an equivariant version of Lupton–Scherer’s theorem and estimates for Hopf $G$-spheres. Finally, \Cref{sec: equivariant fibrations} deals with equivariant fibrations, relating the homotopic distance of fibre-preserving maps to the invariants of the fibres and the base.

\section{Preliminaries: equivariant sectional category}\label{sec:eq-secat}

Throughout this paper, we work in the category of $G$-spaces, where $G$ denotes a compact Lie group. We use the symbol $\simeq_G$ to indicate $G$-homotopy and, when appropriate, a $G$-homotopy equivalence. 

The sectional category of a Hurewicz fibration was originally introduced by \v{S}varc in  \cite{Sva}. Later, Colman and Grant \cite{EqTC} extended this concept to the equivariant setting, defining the equivariant sectional category. 

\begin{definition}[{\cite[Definition 4.1]{EqTC}}]\label{def:eqsecat}
The equivariant sectional category of a $G$-map $p:E\to B$, denoted $\sct_G(p)$, is the least nonnegative integer $n$ such that there is a cover of $B$ by $n+1$ invariant open subsets $\{U_0, \ldots, U_n\}$ and, for each $i$, a local $G$-homotopy section $s_i \colon U_i \to E$ of $p$, that is, the $G$-map $s_i$ is such that the following diagram commutes up to $G$-homotopy
$$\xymatrix{
{U_i} \ar@{^(->}[rr]^{\mathrm{inc}_{U_i}} \ar[dr]_{s_i} & & {B} \\
 & {E} \ar[ur]_p &  }$$
\noindent where $\mathrm{inc}_{U_i} : U_i \hookrightarrow B$ is the inclusion map. If no such integer exists, we set $\sct_G(p):=\infty.$ 
\end{definition}

\begin{remark}\
\begin{enumerate}

\item Clearly, if $p\simeq _G p'$, then $\sct_G(p)=\sct_G(p').$

\item As observed by Colman and Grant, if $p$ is a $G$-fibration, then the triangles in the definition above can be chosen to commute strictly.

\item When $G$ is the trivial group, $\sct_G(p)$ reduces to the classical (nonequivariant) sectional category of $p$, denoted by $\sct(p)$.
\end{enumerate}
\end{remark}

Let us now recall some fundamental properties of the $G$-sectional category, which will be used throughout the paper.

\begin{lemma}\label{ineq}
Let 
$$\xymatrix{
{E} \ar[rr]^{\alpha } \ar[dr]_p & & {E'} \ar[dl]^{p'} \\
 & {B} &
}$$
be a diagram of $G$-spaces and $G$-maps that commutes up to $G$-homotopy. Then
\[
\sct_G(p') \leq \sct_G(p).
\]
\end{lemma}

\begin{proof}
Let $U \subseteq B$ be an invariant open subset, and let $\sigma \colon U \to E$ be a $G$-map such that 
\(p \circ \sigma \simeq_G \mathrm{inc}_U\).  
Then the composite $\sigma' := \alpha \circ \sigma \colon U \to E'$ satisfies
\[
p' \circ \sigma' \;=\; p' \circ \alpha \circ \sigma \;\simeq_G\; p \circ \sigma \;\simeq_G\; \mathrm{inc}_U.
\]
Hence, every local $G$-homotopy section of $p$ induces one for $p'$. Applying this to all subsets in a cover of $B$ realizing $\sct_G(p)$ gives the inequality  $\sct_G(p') \leq \sct_G(p)$.
\end{proof}

As a further property, we note that the $G$-sectional category is invariant under $G$-homotopy, as stated below:

\begin{proposition}\label{G-equiv-Gsecat}
Let 
$$\xymatrix{
{E} \ar[rr]^{\alpha }_{\simeq _G} \ar[d]_p & & {E'} \ar[d]^{p'} \\
{B} \ar[rr]_{\beta }^{\simeq _G} & & {B',}
}$$
be a $G$-homotopy commutative diagram, where $\alpha$ and $\beta$ are $G$-homotopy equivalences. Then 
\[
\sct_G(p)=\sct_G(p').
\]
\end{proposition}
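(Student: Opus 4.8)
The plan is to deduce this from \Cref{ineq} together with a homotopy-theoretic lifting argument. The core difficulty is that the square only commutes up to $G$-homotopy, so one cannot naively compose sections with $\alpha$ and $\beta$; one must first replace the square by one in which the relevant comparison maps are honest $G$-maps over the base. I would therefore argue as follows.

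First I would reduce to the case where $\alpha$ and $\beta$ are the identity maps, which is clearly symmetric, so it suffices to prove one inequality, say $\sct_G(p')\leq\sct_G(p)$. For this I want to build a $G$-map $\gamma:E\to E'$ with $p'\circ\gamma\simeq_G\beta\circ p$; then applying \Cref{ineq} to the $G$-homotopy commutative triangle over $B'$ formed by $\gamma':=\gamma:E\to E'$ (after composing the relevant maps through the equivalence $\beta$) would give the bound — but this needs care because the triangle in \Cref{ineq} is over a single base. The cleanest route: let $\bar\beta:B'\to B$ be a $G$-homotopy inverse of $\beta$. Form the composite $G$-map $\delta:=\alpha:E\to E'$; then $p'\circ\delta = p'\circ\alpha\simeq_G\beta\circ p$, hence $\bar\beta\circ p'\circ\delta\simeq_G\bar\beta\circ\beta\circ p\simeq_G p$. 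So the triangle with $E\xrightarrow{\delta}E'\xrightarrow{\bar\beta\circ p'}B$ and $E\xrightarrow{p}B$ commutes up to $G$-homotopy, and \Cref{ineq} yields $\sct_G(\bar\beta\circ p')\leq\sct_G(p)$. It then remains to show $\sct_G(\bar\beta\circ p')=\sct_G(p')$.

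For that last identity I would prove a small lemma: if $\beta:B'\to B$ is a $G$-homotopy equivalence and $p':E'\to B'$ is any $G$-map, then $\sct_G(\beta\circ p')=\sct_G(p')$. The inequality $\sct_G(\beta\circ p')\le\sct_G(p')$ is the easy direction: given an invariant open cover $\{U_i\}$ of $B'$ with local $G$-sections $s_i:U_i\to E'$ of $p'$, the sets $\beta(U_i)$ need not be open, so instead one pulls back — actually the natural move is to cover $B$ by $\{\beta^{-1}\text{-images}\}$... here is where I would instead use the homotopy invariance machinery directly: a $G$-homotopy equivalence $\beta$ induces, for any invariant open $V\subseteq B$, a comparison that lets one transport $G$-sections back and forth, using that $\beta^{-1}(V)$ is invariant open in $B'$ and that $\beta|:\beta^{-1}(V)\to V$ is a $G$-map covered up to $G$-homotopy by the identity on the nose after choosing $\bar\beta$. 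Concretely, given a cover $\{U_i\}$ of $B'$ realizing $\sct_G(p')$, the sets $V_i:=B\setminus\beta(B'\setminus U_i)$... this is getting delicate; the robust formulation is to note $\{\bar\beta^{-1}(U_i)\}$ is an invariant open cover of $B$ and to build $G$-sections of $\beta\circ p'$ over these sets using $s_i\circ(\text{the }G\text{-map }\bar\beta^{-1}(U_i)\xrightarrow{\bar\beta}U_i)$ up to $G$-homotopy, which works because $\beta\circ p'\circ s_i\circ\bar\beta\simeq_G\beta\circ\bar\beta\simeq_G\mathrm{id}$ restricted appropriately.

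The main obstacle, then, is bookkeeping the open covers under the equivalences: open sets do not push forward along maps, so one must consistently pull back along homotopy inverses and verify that the resulting local sections still satisfy the section condition up to $G$-homotopy on each piece. Once the auxiliary fact $\sct_G(\beta\circ p')=\sct_G(p')$ is in hand, the proposition follows by applying it on the base side (as above) and the analogous statement on the total-space side — namely $\sct_G(p'\circ\alpha^{-1}\text{-type comparisons})$ — but in fact the total-space equivalence $\alpha$ is already absorbed into the triangle argument via \Cref{ineq}, so only the base-space lemma is genuinely needed, applied once in each direction using $\beta$ and $\bar\beta$. I would present the base-space lemma as a standalone step, then close the argument in two lines. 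I expect the write-up to be short modulo that lemma, whose proof is the only place requiring real (if routine) care.
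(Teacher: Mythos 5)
Your approach is essentially the same as the paper's: the paper also isolates the base-side statement ($\sct_G$ is invariant under post-composition of $p$ with a base $G$-homotopy equivalence) as the case requiring work, handles the total-space equivalence by two applications of \Cref{ineq}, and then combines via $\sct_G(p)=\sct_G(\beta\circ p)=\sct_G(p'\circ\alpha)=\sct_G(p')$. One small simplification the paper uses where you struggled: rather than pulling back along a chosen homotopy inverse $\bar\beta$, it takes $V:=\beta^{-1}(U)$ (the honest preimage under $\beta$ itself, which is automatically invariant open) and sets $s':=s\circ\beta|_V$; then $\beta\circ p\circ s'\simeq_G \beta\circ inc_V$, and cancelling $\beta$ using its $G$-homotopy invertibility gives $p\circ s'\simeq_G inc_V$ directly, avoiding the bookkeeping you were worried about.
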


\begin{proof}
First suppose that $B=B'$ and $\beta=1_B$. By Lemma~\ref{ineq} we have $\sct_G(p') \leq \sct_G(p)$.  
Applying the same argument to the $G$-homotopy inverse of $\alpha$ gives the reverse inequality, and hence $\sct_G(p)=\sct_G(p')$.  

Next suppose that $E=E'$ and $\alpha=1_E$. Let $U\subseteq B$ be an invariant open subset and let $s:U\to E$ be a local $G$-homotopy section of $p'$. Define $V:=\beta^{-1}(U)$ and 
\[
s' := s \circ (\beta|_V) \colon V \to E.
\]
Then
\[
\beta \circ p \circ s' \;\simeq_G\; \mathrm{inc}_U \circ (\beta|_V) \;=\; \beta \circ \mathrm{inc}_V,
\]
so $p\circ s' \simeq_G \mathrm{inc}_V$. This shows $\sct_G(p) \leq \sct_G(p')$.  
Applying the same reasoning with the $G$-homotopy inverse of $\beta$ yields the equality $\sct_G(p)=\sct_G(p')$.  

For the general case, combining the two particular cases gives
\[
\sct_G(p)=\sct_G(\beta\circ p)=\sct_G(p'\circ \alpha)=\sct_G(p').
\]
\end{proof}

Another useful feature, which will play a role in later arguments, is the subadditivity of the equivariant sectional category.  Its proof can be found in \cite[Prop. 2.11]{SD}.

\begin{proposition}\label{sub}
Let $p_i:E_i\to B_i$ be a $G$-fibration, for $i\in \{1,2\}$.  
Then the product map
\[
p_1\times p_2:E_1\times E_2 \longrightarrow B_1\times B_2
\]
is a $G$-fibration, where $G$ acts diagonally on both $E_1\times E_2$ and $B_1\times B_2$.  
Moreover, if $B_1\times B_2$ is completely normal and $B_1,B_2$ are Hausdorff, then
\[
\sct _G(p_1\times p_2)\leq \sct _G(p_1)+\sct _G(p_2).
\]
\end{proposition}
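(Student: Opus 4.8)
The plan is to prove the two assertions separately. For the first, that $p_1\times p_2$ is a $G$-fibration, I would simply combine $G$-equivariant lifting functions: if $\lambda_i$ is a $G$-homotopy lifting function for $p_i$, then sending a point $(e_1,e_2)$ together with a path in $B_1\times B_2$ (equivalently, a pair of paths, one in each factor) to the pair of lifts produced by $\lambda_1$ and $\lambda_2$ gives a $G$-homotopy lifting function for $p_1\times p_2$ relative to the diagonal actions. This is routine and uses nothing beyond the definitions, so I would only sketch it.

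For the inequality, write $m=\sct_G(p_1)$ and $n=\sct_G(p_2)$, and assume both are finite (otherwise there is nothing to prove). Since $p_1$ and $p_2$ are $G$-fibrations, the remark following \Cref{def:eqsecat} lets us choose $G$-invariant open covers $\{U_0,\dots,U_m\}$ of $B_1$ and $\{V_0,\dots,V_n\}$ of $B_2$ together with \emph{strict} local $G$-sections $\sigma_i\colon U_i\to E_1$ and $\tau_j\colon V_j\to E_2$. Then $\{U_i\times V_j\}_{0\le i\le m,\,0\le j\le n}$ is a cover of $Z:=B_1\times B_2$ by $G$-invariant (diagonal action) open sets, and on $U_i\times V_j$ the $G$-map $\sigma_i\times\tau_j$ is a strict section of $p_1\times p_2$. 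The essential point is to reorganize this $(m+1)(n+1)$-fold cover into an $(m+n+1)$-fold one by grouping according to $k:=i+j\in\{0,1,\dots,m+n\}$: invoking the classical staircase refinement lemma for completely normal spaces (\v{S}varc), one extracts from the product cover, for each $k$, a $G$-invariant open set $W_k$ that is a \emph{disjoint} union $W_k=\bigsqcup_{i+j=k}W_k^{(i,j)}$ of $G$-invariant open sets with $W_k^{(i,j)}\subseteq U_i\times V_j$, in such a way that $\{W_0,\dots,W_{m+n}\}$ still covers $Z$. On each $W_k$ define $s_k\colon W_k\to E_1\times E_2$ by $s_k|_{W_k^{(i,j)}}=(\sigma_i\times\tau_j)|_{W_k^{(i,j)}}$; since the pieces are open, pairwise disjoint, and $G$-invariant, this is a well-defined continuous $G$-map and a strict section of $p_1\times p_2$. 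Hence $\{W_0,\dots,W_{m+n}\}$ witnesses $\sct_G(p_1\times p_2)\le m+n$.

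The main obstacle is the refinement step, and this is exactly where the hypotheses are used. Disjointifying the traces of the product cover within each level $k$ is impossible for an arbitrary normal $Z$; one needs every subspace of $Z$ to be normal — that is, complete normality — so that the relevant shrinkings can be carried out inside the subspaces on which the level-$k$ pieces live, and the Hausdorff hypotheses guarantee that $Z$ and these subspaces satisfy the separation axioms the classical construction requires. The additional equivariant subtlety is that all the shrinkings must be produced $G$-invariantly; this causes no difficulty because the equivariant Urysohn lemma holds for compact group actions on normal spaces (average a non-equivariant Urysohn function over $G$), and that is all the classical argument invokes. In effect, the above is a summary of the proof of \cite[Prop.\ 2.11]{SD}.
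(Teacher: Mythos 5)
This proposition is not proved in the paper at all: it is quoted directly from \cite[Prop.~2.11]{SD}, so there is no in-paper argument to compare yours against. Your proposal is the standard \v{S}varc-type product argument in equivariant form --- strict $G$-invariant local sections (allowed since the $p_i$ are $G$-fibrations), the product cover $\{U_i\times V_j\}$ with sections $\sigma_i\times\tau_j$, and the level-$k$ staircase disjointification using complete normality of $B_1\times B_2$ --- and it is sound and consistent with the hypotheses of the cited statement (the first assertion is even easier than your sketch: lift the two coordinate homotopies separately, no lifting functions needed). The one step that genuinely needs care is carrying out the refinement $G$-invariantly; your averaging of Urysohn functions over the compact group does the job, and an equivalent, arguably cleaner, device is to pass to the orbit space $(B_1\times B_2)/G$ --- which inherits hereditary normality because the orbit map of a compact group action is a closed surjection, and whose open sets correspond exactly to invariant open sets upstairs --- perform the classical refinement there, and pull back.
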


\bigskip
Fadell \cite{Fadelleqcat} introduced the notion of the $G$-equivariant Lusternik–Schnirelmann category for $G$-spaces. 
This concept was further developed by Marzantowicz \cite{Eqlscategory}, Clapp and Puppe \cite{Clapp-Puppe}, Colman \cite{Colmaneqcat}, and Angel, Colman, Grant, and Oprea \cite{Angel-Colman-Grant-Oprea-Moritainv-eqcat}. 
This homotopy invariant of a $G$-space $X$ is denoted by $\ct_G(X)$.  

Before stating the definition, we recall the notion of a $G$-categorical set. 
A $G$-invariant open subset $U \subseteq X$ is called \emph{$G$-categorical} if the inclusion $i_{U}\colon U \hookrightarrow X$ is $G$-homotopic to a $G$-map whose image lies entirely within a single orbit.  

\begin{definition}
For a $G$-space $X$, the $G$-equivariant category $\ct_G(X)$ is the least nonnegative integer $n$ such that $X$ can be covered by $n+1$ $G$-categorical sets. 
If no such integer exists, we set $\ct_G(X) := \infty$.
\end{definition}

Although standard, we briefly recall some basic facts about fixed points under closed subgroups, 
both to fix notation and for later use. 
Let $H$ be a closed subgroup of $G$, and let $X$ be a $G$-space. 
The set of $H$-fixed points of $X$ is denoted by $X^H$ and defined as
\[
X^H := \{x \in X \mid h \cdot x = x \text{ for all } h \in H\}.
\]
If $f \colon X \to Y$ is a $G$-map, then $f(X^H) \subseteq Y^H$, so the restriction of $f$ to $X^H$ induces a map 
\[
f^H \colon X^H \to Y^H.
\]

Let $\mathbf{GTop}$ denote the category of $G$-spaces and $G$-maps, and $\mathbf{Top}$ the category of topological spaces and continuous maps. 
The construction $X \mapsto X^H$ thus defines a functor
\[
(-)^H \colon \mathbf{GTop} \to \mathbf{Top}.
\]
It is well known that this functor is naturally isomorphic to the hom-functor 
$$\mathrm{Hom}_{\mathbf{GTop}}(G/H, -) \colon \mathbf{GTop} \to \mathbf{Top},$$ 
and therefore preserves all small limits. 
In particular, $(-)^H$ preserves pullbacks and small products.

The fixed-point construction also allows us to formulate a natural equivariant analogue of connectedness:

\begin{definition}
A $G$-space $X$ is said to be \emph{$G$-connected} if, for every closed subgroup $H$ of $G$, the fixed-point set $X^H$ is path-connected.
\end{definition}

This notion provides a useful bridge between equivariant sectional category and equivariant LS category:

\begin{remark}\label{fund}
If $x_0\in X$ is a fixed point of the $G$-action and $X$ is $G$-connected, then the equivariant sectional category of the inclusion 
\(i\colon \{x_0\} \to X\) satisfies
\[
\sct_G(i\colon \{x_0\} \to X) = \ct_G(X);
\]
see \cite[Corollary 4.7]{EqTC}.  
Moreover, when $G$ is the trivial group, this recovers the classical invariant, i.e.\ \(\ct_G(X)=\ct(X)\).
\end{remark}

\begin{corollary}
Let $p \colon E \to B$ be a $G$-map such that $B$ is $G$-connected and $E^G \neq \varnothing$. Then
\[
\sct_G(p) \;\leq\; \ct_G(B).
\]
\end{corollary}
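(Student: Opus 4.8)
The plan is to deduce this directly from \Cref{ineq} and \Cref{fund}. Since $E^G\neq\emptyset$, I would first choose a point $e_0\in E^G$. Because $p$ is a $G$-map, the point $b_0:=p(e_0)$ lies in $B^G$, so $\{b_0\}$ is a $G$-subspace of $B$ (with trivial $G$-action) and the inclusion $i\colon\{b_0\}\hookrightarrow B$ is a $G$-map. Moreover, since $e_0$ is a fixed point, the map $\iota\colon\{b_0\}\to E$ sending $b_0$ to $e_0$ is also a $G$-map, and by construction $p\circ\iota=i$.

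Next I would consider the triangle with vertices $\{b_0\}$, $E$, $B$ and edges $\iota\colon\{b_0\}\to E$, $p\colon E\to B$, $i\colon\{b_0\}\to B$, which commutes strictly (in particular up to $G$-homotopy). Applying \Cref{ineq} to this triangle yields $\sct_G(p)\leq\sct_G(i\colon\{b_0\}\to B)$. Finally, since $B$ is $G$-connected and $b_0\in B^G$, \Cref{fund} gives $\sct_G(i\colon\{b_0\}\to B)=\ct_G(B)$, and combining the two relations produces $\sct_G(p)\leq\ct_G(B)$.

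The argument is short, and the only point that needs care is the bookkeeping of equivariance: the hypothesis $E^G\neq\emptyset$ is exactly what makes $\iota$ a $G$-map, while the hypothesis that $p$ is a $G$-map is what places $b_0$ in $B^G$ so that the equality $\sct_G(i)=\ct_G(B)$ of \Cref{fund} applies; the $G$-connectedness of $B$ enters only through that equality. I do not expect a genuine obstacle here. (One could instead argue directly with a cover of $B$ by $n+1$ $G$-categorical sets, lifting each $G$-categorical homotopy through $p$ by sending the relevant orbit to $e_0$ and its orbit, but routing through \Cref{ineq} and \Cref{fund} avoids having to manage the orbit types of the $G$-categorical sets explicitly.)
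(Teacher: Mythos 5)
Your proof is correct and takes essentially the same route as the paper: fix $e_0\in E^G$, set $b_0=p(e_0)\in B^G$, form the strictly commutative triangle over $B$ with the singleton at the apex, then apply Lemma~\ref{ineq} and Remark~\ref{fund}. The only cosmetic difference is that the paper writes the singleton as $\{e_0\}$ with the constant map $b_0$ to $B$, whereas you write it as $\{b_0\}$ with the map $\iota$ to $E$ sending $b_0$ to $e_0$; these are the same triangle under the obvious identification.
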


\begin{proof}
Choose $e_0 \in E^G$ and set $b_0 := p(e_0) \in B^G$. 
Then the following diagram of $G$-maps commutes:
\[
\xymatrix{
{\{e_0\}} \ar@{^(->}[rr] \ar[dr]_{b_0} & & {E} \ar[dl]^p \\
 & {B} & }
\]
The result follows directly from Lemma~\ref{ineq} together with Remark~\ref{fund}.
\end{proof}

We conclude this section by recalling two standard bounds for the equivariant sectional category.  
First, we record a slightly modified version of the equivariant homotopy dimension–connectivity upper bound, as stated in \cite{G-symmetrized,SD}. 
Since the proof is essentially the same as in the original references, we omit it.  
Throughout, a \emph{Serre $G$-fibration} will mean a $G$-map $p \colon E \to B$ satisfying the $G$-homotopy lifting property with respect to all $G$-CW complexes. 
Clearly, every Serre $G$-fibration is in particular a $G$-fibration.

\begin{proposition}\label{connect}
Let $p \colon E \to B$ be a Serre $G$-fibration, and suppose that $B$ is a $G$-CW complex with $\dim(B) \geq 2$. 
Assume that for every closed subgroup $H \subseteq G$, the fixed-point map $p^H \colon E^H \to B^H$ is an $m$-equivalence. 
Then
\[
\sct_G(p) \;<\; \frac{\mathrm{hdim}(B) + 1}{m + 1}.
\]
Here $\mathrm{hdim}(B)$ denotes the minimal dimension $\dim(B')$ such that $B'$ is a $G$-CW complex $G$-homotopy equivalent to $B$.
\end{proposition}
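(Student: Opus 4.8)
The plan is to reduce to a minimal-dimensional model of the base, reformulate the bound as the existence of a $G$-section of an iterated fibrewise join, and then run equivariant (Bredon) obstruction theory. This is the equivariant adaptation of Švarc's classical argument \cite{Sva}, following the treatments in \cite{G-symmetrized} and \cite{SD}.

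First I would invoke the $G$-homotopy invariance of the equivariant sectional category (Proposition \ref{G-equiv-Gsecat}) to replace $B$ by a $G$-CW complex $B'$ with $\dim(B')=\mathrm{hdim}(B)$ that is $G$-homotopy equivalent to $B$, pulling $p$ back to a Serre $G$-fibration $p'\colon E'\to B'$; since $(-)^H$ preserves pullbacks, $(p')^H$ is the pullback of the $m$-equivalence $p^H$ along the weak equivalence $(B')^H\to B^H$, hence is again an $m$-equivalence. Writing $n:=\mathrm{hdim}(B)$, I would then use an equivariant version of Švarc's criterion (cf. \cite{Sva} and its equivariant counterparts \cite{EqTC,SD}): $\sct_G(p')\le k-1$ exactly when the $k$-fold fibrewise join $(p')^{\ast_{B'}k}\colon \ast^{k}_{B'}E'\to B'$, itself a $G$-fibration, admits a $G$-section. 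So it suffices to produce, for a suitable $k$, a $G$-section of $(p')^{\ast_{B'}k}$.

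The next step is the connectivity estimate. Since $(-)^H$ preserves pullbacks and products it commutes with the fibrewise join, so $\big((p')^{\ast_{B'}k}\big)^H$ is the $k$-fold fibrewise join of the Serre fibration $(p')^H\colon (E')^H\to (B')^H$, and its fibre is the $k$-fold join $(F^H)^{\ast k}$, where $F^H$ is the fibre of $(p')^H$. Because $(p')^H$ is an $m$-equivalence and a Serre fibration, the long exact homotopy sequence forces $F^H$ to be $(m-1)$-connected; using that the join of an $a$-connected and a $b$-connected space is $(a+b+2)$-connected, one gets that $(F^H)^{\ast k}$ is $\big(k(m+1)-2\big)$-connected for every closed subgroup $H$.

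Finally I would build a $G$-section of $(p')^{\ast_{B'}k}$ over the $n$-dimensional $G$-CW complex $B'$ by equivariant obstruction theory: the obstruction to extending a partial $G$-section across the $j$-cells lies in Bredon cohomology with coefficient system $G/H\mapsto \pi_{j-1}\big((F^H)^{\ast k}\big)$, which vanishes as soon as $j-1\le k(m+1)-2$, while there are no cells in dimension $>n$. Hence every obstruction vanishes once $k(m+1)\ge n+1$; choosing $k=\lceil (n+1)/(m+1)\rceil$ yields a $G$-section, so $\sct_G(p)=\sct_G(p')\le k-1<\dfrac{n+1}{m+1}=\dfrac{\mathrm{hdim}(B)+1}{m+1}$. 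The hypothesis $\dim(B)\ge 2$ is what rules out the exceptional low-dimensional cases where the relevant homotopy groups need not assemble into a genuine local coefficient system. The main obstacle I anticipate is not any one computation but assembling the equivariant machinery cleanly — checking that the fibrewise-join characterization of $\sct_G$ holds in the required generality, that $(-)^H$ genuinely commutes with the (quotient-flavoured) fibrewise join, and that equivariant obstruction theory applies to the $G$-fibration $(p')^{\ast_{B'}k}$ — all of which parallel \cite{G-symmetrized,SD} and are therefore, as the statement asserts, essentially the same as in the non-equivariant theory.
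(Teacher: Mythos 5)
Your proposal is correct and reproduces, in essentially the same form, the standard equivariant Švarc argument (reduction to a minimal-dimensional $G$-CW model via Proposition \ref{G-equiv-Gsecat}, the fibrewise-join characterization of $\sct_G$, the $(k(m+1)-2)$-connectivity of $k$-fold joins of $(m-1)$-connected fibres, and Bredon obstruction theory over the $n$-skeleton) that the paper itself defers to in \cite{G-symmetrized} and \cite{SD} while explicitly omitting the proof. The one technical point you rightly flag — that $(-)^H$ genuinely commutes with the quotient-flavoured fibrewise join, which does not follow purely from $(-)^H$ preserving limits — is exactly the detail that is handled in those references and would need to be spelled out in a self-contained write-up.
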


Finally, we state the classical cohomological lower bound for the equivariant sectional category, 
originally established by Arora and Daundkar in \cite{SD}. 
We begin by fixing notation for Borel cohomology. 
Let $EG \to BG$ be the universal principal $G$-bundle. 
For a $G$-space $X$, its homotopy orbit space is
\[
X^h_G := EG \times_G X,
\]
and its Borel cohomology with coefficients in a commutative ring $R$ is
\[
H^*_G(X;R) := H^*(X^h_G;R).
\]
If $f \colon X \to Y$ is a $G$-map, we denote by $f^h_G \colon X^h_G \to Y^h_G$ the induced map between the corresponding homotopy orbit spaces.

\begin{proposition}\label{Gcohom}
Let $p \colon E \to B$ be a $G$-map. 
Suppose there exist cohomology classes $u_1,\dots,u_k \in H^*_G(B;R)$ such that
\[
(p^h_G)^*(u_1) = \cdots = (p^h_G)^*(u_k) = 0,
\quad\text{and}\quad
u_1 \smile \cdots \smile u_k \neq 0.
\]
Then $\sct_G(p) > k$.
\end{proposition}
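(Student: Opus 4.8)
The statement to prove is Proposition \ref{Gcohom}, the cohomological lower bound for equivariant sectional category: if $u_1,\dots,u_k \in H^*_G(B;R)$ satisfy $(p^h_G)^*(u_i)=0$ for all $i$ and $u_1\smile\cdots\smile u_k \neq 0$, then $\sct_G(p) > k$. Since this result is cited from \cite[Theorem 2.2]{SD}, I would reconstruct the standard Švarc-style argument adapted to the Borel construction. The plan is to apply the Borel functor $(-)^h_G$ to turn the equivariant problem into a non-equivariant one about the ordinary sectional category of $p^h_G$, and then run the classical relative-cohomology product argument.

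First I would observe that the homotopy orbit construction is well-behaved: applying $(-)^h_G = EG\times_G(-)$ to a $G$-map $p:E\to B$ yields a map $p^h_G : E^h_G \to B^h_G$, and an invariant open cover $\{U_0,\dots,U_n\}$ of $B$ with local $G$-homotopy sections $s_i : U_i \to E$ produces an open cover $\{(U_i)^h_G\}$ of $B^h_G$ with honest local homotopy sections $(s_i)^h_G$ of $p^h_G$ (the $G$-homotopy $p\circ s_i \simeq_G \mathrm{inc}_{U_i}$ passes through the functor to an ordinary homotopy). Hence $\sct(p^h_G) \leq \sct_G(p)$. So it suffices to show $\sct(p^h_G) > k$ under the stated cohomological hypothesis on $B^h_G = X^h_G$ with $X=B$.

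Next comes the classical part. Suppose for contradiction that $\sct(p^h_G) \leq k$, so $B^h_G$ admits an open cover $V_0,\dots,V_k$ with local homotopy sections of $p^h_G$. For each $i$, the class $u_i$ lifts to a class $\tilde u_i \in H^*(B^h_G, V_i; R)$: indeed $u_i$ restricts to zero on $V_i$ because on $V_i$ the inclusion factors up to homotopy through $E^h_G$, on which $u_i$ pulls back to zero, so exactness of the long exact sequence of the pair $(B^h_G, V_i)$ gives the lift. Then the relative cup product $\tilde u_1 \smile \cdots \smile \tilde u_k \in H^*(B^h_G, V_0\cup\cdots\cup V_k; R) = H^*(B^h_G, B^h_G; R) = 0$ maps to $u_1 \smile \cdots \smile u_k$ under the map to absolute cohomology, forcing $u_1\smile\cdots\smile u_k = 0$, a contradiction. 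Therefore $\sct(p^h_G) > k$, and combined with the inequality above, $\sct_G(p) > k$.

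The main obstacle I anticipate is the technical care needed for the lift of $u_i$ to relative cohomology: one needs the local section $s_i : V_i \to E^h_G$ to satisfy that $\mathrm{inc}_{V_i}$ is homotopic to $p^h_G \circ s_i$, so that $\mathrm{inc}_{V_i}^*(u_i) = (p^h_G\circ s_i)^*(u_i) = s_i^*((p^h_G)^*(u_i)) = 0$, and then invoke exactness. This is routine but must be set up correctly; a secondary subtlety is ensuring that passing from a $G$-invariant open cover of $B$ to an open cover of $B^h_G$ genuinely preserves the covering property and the homotopy sections, which follows because $EG\times_G(-)$ takes $G$-invariant open sets to open sets and preserves the relevant homotopies. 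Since the excerpt explicitly states this is quoted from \cite{SD}, an alternative is simply to cite it; but the reconstruction above is the argument I would write if a proof were required.
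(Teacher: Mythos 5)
The paper itself gives no proof of this proposition --- it is imported verbatim from the cited reference \cite{SD} --- so the only question is whether your reconstruction is sound. Your overall route (apply the Borel construction to get $\sct(p^h_G)\leq \sct_G(p)$, noting that $EG\times_G(-)$ sends an invariant open cover with local $G$-homotopy sections to an open cover of $B^h_G$ with local homotopy sections, and then run \v{S}varc's relative cup-product argument) is exactly the standard argument behind the cited result, and the first half is fine.

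However, the classical step has a genuine counting error at its crux. Assuming $\sct(p^h_G)\leq k$ gives a cover by $k+1$ open sets $V_0,\dots,V_k$, but you only have $k$ classes $u_1,\dots,u_k$, so after lifting $u_i$ to $\tilde u_i\in H^*(B^h_G,V_i;R)$ the relative product $\tilde u_1\smile\cdots\smile \tilde u_k$ lives in $H^*(B^h_G, V_1\cup\cdots\cup V_k;R)$, \emph{not} in $H^*(B^h_G, V_0\cup\cdots\cup V_k;R)$: the set $V_0$ is never used, $V_1\cup\cdots\cup V_k$ need not be all of $B^h_G$, and so no contradiction follows. What your argument actually rules out is a cover by $k$ sets, i.e.\ it proves $\sct_G(p)\geq k$ (equivalently $\sct_G(p)>k-1$), one less than the strict inequality claimed in the statement. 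Note that no repair can give the literal ``$>k$'' from only $k$ classes with the paper's reduced convention: for the path fibration over $S^n$ one nonzero class lies in the kernel, yet the reduced sectional category is $1$, not $\geq 2$; the strict form would require $k+1$ classes (or amounts to the unreduced count of open sets). This off-by-one is arguably inherited from the transcription of the cited theorem --- indeed the paper only ever invokes the proposition in the weaker form $\D_G(f,g)\geq k$, resp.\ $\sct_G\geq \mathrm{nil}(\ker)$, in Theorem 3.9 --- but as a proof of the proposition as stated, the step quoted above fails and the strict bound is not established.
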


\noindent A proof of this result can be found in \cite[Theorem~2.2]{SD}.

Together with the dimension–connectivity upper bound, this cohomological lower bound will serve as a key tool in the sections that follow.

\section{Equivariant homotopic distance}\label{sec: eq homotopic distance}

In this section we introduce the notion of \emph{equivariant homotopic distance}, which plays a central role in our work. 
This concept extends the homotopic distance introduced by Macías-Virgós and Mosquera-Lois in \cite{macias2022homotopic}, adapting it to the setting of spaces endowed with a continuous action of a topological group.

\begin{definition}
Let $X$ and $Y$ be $G$-spaces, and let $f,g \colon X \to Y$ be continuous $G$-maps.  
The \emph{equivariant homotopic distance} between $f$ and $g$, denoted $\D_G(f,g)$, is defined as the least nonnegative integer $n$ such that there exists an open cover $\{U_0, \dots, U_n\}$ of $X$ with each $U_i$ $G$-invariant, and such that 
\[
f|_{U_i} \simeq_G g|_{U_i}, \quad 0 \leq i \leq n.
\]
If no such integer exists, we set $\D_G(f,g) = \infty$.
\end{definition}

In this sense, the equivariant homotopic distance provides a quantitative measure of how far two $G$-maps are from being $G$-homotopic. 
Some of its basic properties follow immediately from the definition.

\begin{remark}
Let $f,g,f',g' \colon X \to Y$ be $G$-maps. Then:
\begin{enumerate}
\item If $G$ acts trivially on $X$ and $Y$, then $\D_G(f,g) = D(f,g)$, where $D(f,g)$ is the classical homotopic distance.  
In general, one always has $\D(f,g) \leq \D_G(f,g)$.

\item $\D_G(f,g) = \D_G(g,f)$.

\item $\D_G(f,g) = 0$ if and only if $f \simeq_G g$.

\item If $f \simeq_G f'$ and $g \simeq_G g'$, then $\D_G(f,g) = \D_G(f',g')$.
\end{enumerate}
\end{remark}

\bigskip
To relate the equivariant homotopic distance to the equivariant sectional category, we consider the pullback of the free path fibration along the map 
$(f,g) \colon X \to Y \times Y$. This leads to the following characterization.

\begin{theorem}\label{thm: DG equals secat}
Let $f,g:X\to Y$ be $G$-maps with the following pullback diagram:
\begin{equation}\label{eq:pullback of free path fib}
\xymatrix{
{\mathcal{P}(f,g)} \ar[d]_{\bar{\pi}_Y} \ar[r]^{p} & {Y^I} \ar[d]^{\pi_Y} \\
{X} \ar[r]_{(f ,g )} & {Y\times Y.} }    
\end{equation}
Then
  $\D_G(f,g)=\sct_G(\bar{\pi}_Y)$.
\end{theorem}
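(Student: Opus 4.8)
The plan is to establish the two inequalities $\sct_G(\bar{\pi}_Y)\le \D_G(f,g)$ and $\D_G(f,g)\le \sct_G(\bar{\pi}_Y)$ separately, the bridge in both directions being the exponential law, which identifies a $G$-homotopy $f|_U\simeq_G g|_U$ with a $G$-map from $U$ into the free path space of $Y$. Unwinding the pullback, one has
\[
\mathcal{P}(f,g)=\{(x,\gamma)\in X\times Y^I : \gamma(0)=f(x),\ \gamma(1)=g(x)\},
\]
with $\bar{\pi}_Y(x,\gamma)=x$, $p(x,\gamma)=\gamma$, and $G$ acting diagonally via $\sigma\cdot(x,\gamma)=(\sigma x,\sigma\gamma)$, $(\sigma\gamma)(t)=\sigma(\gamma(t))$. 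Since $\pi_Y:Y^I\to Y\times Y$ is a $G$-fibration and a pullback of a $G$-fibration is again a $G$-fibration, $\bar{\pi}_Y$ is a $G$-fibration; hence, by the observation recalled after \Cref{def:eqsecat}, local $G$-homotopy sections of $\bar{\pi}_Y$ may be taken to be strict sections.

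For $\sct_G(\bar{\pi}_Y)\le \D_G(f,g)$: given a $G$-invariant open cover $\{U_0,\dots,U_n\}$ of $X$ realizing $\D_G(f,g)$, choose $G$-homotopies $H_i:U_i\times I\to Y$ (with $G$ acting trivially on $I$) with $H_i(-,0)=f|_{U_i}$ and $H_i(-,1)=g|_{U_i}$. Define $s_i:U_i\to\mathcal{P}(f,g)$ by $s_i(x)=\bigl(x,\,t\mapsto H_i(x,t)\bigr)$. Continuity is the exponential law in the compactly generated Hausdorff setting, and $G$-equivariance of $s_i$ follows from that of $H_i$ together with the triviality of the $G$-action on $I$; moreover $\bar{\pi}_Y\circ s_i=\mathrm{inc}_{U_i}$. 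Thus each $s_i$ is a local $G$-section of $\bar{\pi}_Y$ over $U_i$, giving $\sct_G(\bar{\pi}_Y)\le n$.

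For $\D_G(f,g)\le \sct_G(\bar{\pi}_Y)$: take a $G$-invariant open cover $\{U_0,\dots,U_n\}$ of $X$ with strict $G$-sections $s_i:U_i\to\mathcal{P}(f,g)$ of $\bar{\pi}_Y$. Write $s_i(x)=(x,\gamma_i(x))$; then $x\mapsto\gamma_i(x)$ equals $p\circ s_i$, hence is a $G$-map $U_i\to Y^I$, and by definition of $\mathcal{P}(f,g)$ it satisfies $\gamma_i(x)(0)=f(x)$, $\gamma_i(x)(1)=g(x)$. Its adjoint $H_i:U_i\times I\to Y$, $H_i(x,t)=\gamma_i(x)(t)$, is then a $G$-homotopy from $f|_{U_i}$ to $g|_{U_i}$. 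Hence $\D_G(f,g)\le n$, and combining with the previous paragraph yields the equality. (If one prefers to avoid invoking the $G$-fibration property, a general local $G$-homotopy section $s_i$ of $\bar{\pi}_Y$ over $U_i$ has $\bar{\pi}_Y\circ s_i=:a_i\simeq_G\mathrm{inc}_{U_i}$, and one still obtains $f|_{U_i}\simeq_G f\circ a_i\simeq_G g\circ a_i\simeq_G g|_{U_i}$, the middle $G$-homotopy being the adjoint of $p\circ s_i$.)

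The only point requiring care is that the exponential homeomorphism $\mathrm{Map}(U_i\times I,Y)\cong\mathrm{Map}(U_i,Y^I)$ restricts to a bijection between $G$-homotopies $U_i\times I\to Y$ and $G$-maps $U_i\to Y^I$, and that this is compatible with the $G$-invariance of the cover $\{U_i\}$; this is where the compactly generated Hausdorff hypothesis on all spaces involved is used. Beyond this, no serious obstacle is expected: the argument is precisely the equivariant refinement of the non-equivariant identity of Macías-Virgós and Mosquera-Lois.
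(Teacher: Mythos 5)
Your proof is correct and takes essentially the same approach as the paper's: both directions are established via the exponential-law correspondence between a $G$-homotopy $U\times I\to Y$ and a $G$-map $U\to Y^I$, fed through the universal property of the pullback defining $\mathcal{P}(f,g)$. The one place where you are slightly more careful than the published proof is in justifying why strict $G$-sections may be used (by noting $\bar{\pi}_Y$ is a $G$-fibration) and in supplying, in the parenthetical, the variant argument that works directly with $G$-homotopy sections $s_i$ with $\bar{\pi}_Y\circ s_i\simeq_G\mathrm{inc}_{U_i}$; the paper implicitly assumes strict sections without comment.
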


\begin{proof}
Recall that 
\[
\mathcal{P}(f,g)=\{(x,\gamma)\in X\times Y^I \mid f(x)=\gamma(0), \ g(x)=\gamma(1)\}.
\]
Suppose $U\subseteq X$ is a $G$-invariant open subset. 
If $\bar{\pi}_Y$ admits a local $G$-section $s:U\to \mathcal{P}(f,g)$, then $s(u)=(u,\gamma_u)$ for some path $\gamma_u$, and the homotopy $H(u,t)=\gamma_u(t)$ shows that $f|_U\simeq_G g|_U$.  
This gives $\D_G(f,g)\leq \sct_G(\bar{\pi}_Y)$.

Conversely, if $f|_U\simeq_G g|_U$ via a $G$-homotopy $H:U\times I\to Y$, then the map $h:U\to Y^I$, $h(u)(t)=H(u,t)$, satisfies $\pi_Y\circ h=(f,g)|_U$. By the universal property of pullbacks, $h$ lifts to a local $G$-section $s:U\to \mathcal{P}(f,g)$ of $\bar{\pi}_Y$.  
Thus $\sct_G(\bar{\pi}_Y)\leq \D_G(f,g)$.
Putting the two inequalities together yields the result. The argument is entirely analogous to the classical case (see \cite{macias2022homotopic}), but we have included the essential steps for completeness.
\end{proof}

As a direct application, the equivariant topological complexity of a $G$-space can be expressed as the equivariant homotopic distance between the two canonical projections.  

\begin{corollary}\label{cor: DG equals TCG}
Let $X$ be a $G$-space, and let $pr_1^X, pr_2^X \colon X \times X \to X$ denote the coordinate projections. Then 
\[
\TC_G(X) \;=\; \D_G(pr_1^X, pr_2^X).
\]
\end{corollary}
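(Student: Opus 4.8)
The plan is to deduce this directly from \Cref{thm: DG equals secat} together with the definition of the equivariant topological complexity of Colman and Grant. Recall that $\TC_G(X)$ is defined as $\sct_G(\pi_X)$, where $\pi_X \colon X^I \to X \times X$ is the free path $G$-fibration $\gamma \mapsto (\gamma(0), \gamma(1))$, with $G$ acting on $X^I$ by $(g\cdot\gamma)(t) = g\cdot\gamma(t)$ and diagonally on $X \times X$. So it suffices to identify $\pi_X$ with the pullback $\bar{\pi}_Y$ appearing in \eqref{eq:pullback of free path fib} when we take $Y = X$ and $(f,g) = (pr_1^X, pr_2^X) \colon X \times X \to X \times X$.

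First I would observe that $(pr_1^X, pr_2^X) = id_{X\times X}$ as a $G$-map. Hence the pullback square defining $\mathcal{P}(pr_1^X, pr_2^X)$ has its bottom map equal to the identity, and pulling back a $G$-fibration along the identity returns (a canonical $G$-homeomorphic copy of) the fibration itself: there is a $G$-homeomorphism $\mathcal{P}(pr_1^X, pr_2^X) \cong X^I$ under which $\bar{\pi}_X$ corresponds to $\pi_X$ and $p$ corresponds to $id_{X^I}$. Concretely, $\mathcal{P}(pr_1^X, pr_2^X) = \{((a,b),\gamma) \in (X\times X) \times X^I \mid a = \gamma(0),\ b = \gamma(1)\}$, and the map $\gamma \mapsto ((\gamma(0),\gamma(1)),\gamma)$ is a $G$-homeomorphism onto it with inverse the second projection; this intertwines $\pi_X$ with $\bar{\pi}_X$. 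Since $\sct_G$ is a $G$-homotopy invariant (indeed invariant under $G$-homeomorphism over a fixed base, by \Cref{G-equiv-Gsecat} or already by the remark after \Cref{def:eqsecat}), we get $\sct_G(\bar{\pi}_X) = \sct_G(\pi_X) = \TC_G(X)$.

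Putting these together: by \Cref{thm: DG equals secat} applied with $Y = X$, $f = pr_1^X$, $g = pr_2^X$, we have $\D_G(pr_1^X, pr_2^X) = \sct_G(\bar{\pi}_X)$, and the previous paragraph identifies the right-hand side with $\TC_G(X)$. This completes the argument.

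I do not anticipate a genuine obstacle here; the only point requiring a modicum of care is checking that the identification $\mathcal{P}(pr_1^X,pr_2^X) \cong X^I$ is $G$-equivariant with respect to the diagonal actions in play, and that the base spaces literally coincide so that \Cref{G-equiv-Gsecat} (or the simpler observation that $\sct_G$ is unchanged under a $G$-homeomorphism commuting with the projections) applies cleanly. One could alternatively bypass the pullback identification entirely and argue from the definitions: a $G$-invariant open $U \subseteq X\times X$ on which $pr_1^X|_U \simeq_G pr_2^X|_U$ is exactly a $G$-invariant open set on which the inclusion $U \hookrightarrow X\times X$ admits a local $G$-homotopy section of $\pi_X$ (the $G$-homotopy between the projections, read as a path-valued map, provides the section, and conversely), which is precisely the condition counted by $\TC_G(X) = \sct_G(\pi_X)$. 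Either route is short.
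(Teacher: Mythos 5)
Your proof is correct and follows essentially the same route as the paper's: identify $(pr_1^X, pr_2^X)$ with $id_{X\times X}$, so that the pullback $\bar{\pi}_X$ coincides (up to a canonical $G$-homeomorphism over the base) with $\pi_X$, then invoke \Cref{thm: DG equals secat}. The paper states this more tersely, simply writing $\bar{\pi}_X = \pi_X$, while you make the $G$-homeomorphism and the invariance of $\sct_G$ under it explicit, which is a reasonable amount of extra care but not a different argument.
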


\begin{proof}
Observe that the map $(pr_1^X, pr_2^X) \colon X \times X \to X \times X$ is the identity.  
Consequently, in the pullback diagram \eqref{eq:pullback of free path fib} we have $\bar{\pi}_X = \pi_X$.  
Applying Theorem~\ref{thm: DG equals secat} therefore yields
\[
\TC_G(X) \;=\; \sct_G(\pi_X) \;=\; \sct_G(\bar{\pi}_X) \;=\; \D_G(pr_1^X, pr_2^X).
\qedhere
\]
\end{proof}

\begin{remark}
This result is the direct equivariant analogue of the classical identity 
$\TC(X) = D(pr_1^X,pr_2^X)$, 
which expresses topological complexity as a homotopic distance.  
Thus, $\D_G$ provides a natural extension of this relationship to the equivariant setting, 
confirming that $\TC_G$ can be fully understood as a special case of equivariant homotopic distance.
\end{remark}

Next, we examine a natural connection between the equivariant homotopic distance and the equivariant LS category. 
This result should be seen as the equivariant analogue of the classical identity 
$\ct(X) = \D(i_1,i_2)$, though certain subtleties arise due to the presence of fixed points and the notion of $G$-connectedness.  

Suppose $X$ is a $G$-space with a global fixed point $x_0 \in X^G$. 
The product $X \times X$ becomes a $G$-space under the diagonal $G$-action. 
Let $c_{x_0} \colon X \to X$ denote the constant $G$-map, and define the $G$-maps
\[
i_1(x) = (x,x_0) = (id_X,c_{x_0})(x),
\qquad
i_2(x) = (x_0,x) = (c_{x_0},id_X)(x).
\]

\begin{proposition}
Let $X$ be a $G$-connected $G$-space with $X^G \neq \varnothing$. 
Then
\[
\ct_G(X) \;=\; \D_G(i_1,i_2).
\]
\end{proposition}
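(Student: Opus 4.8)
The plan is to identify $\D_G(i_1,i_2)$ with $\ct_G(X)$ by comparing both quantities to the equivariant sectional category of a suitable path fibration, using \Cref{thm: DG equals secat}. Concretely, with $f=i_1=(id_X,c_{x_0})$ and $g=i_2=(c_{x_0},id_X)$, the map $(f,g):X\to (X\times X)\times(X\times X)$ has image in the subspace of quadruples $(x,x_0,x_0,x)$, and \Cref{thm: DG equals secat} gives $\D_G(i_1,i_2)=\sct_G(\bar\pi)$ where $\bar\pi\colon \mathcal{P}(i_1,i_2)\to X$ is the pullback of the free path fibration on $X\times X$. The first step is to unwind this pullback: a point of $\mathcal{P}(i_1,i_2)$ is a pair $(x,\gamma)$ with $\gamma$ a path in $X\times X$ from $(x,x_0)$ to $(x_0,x)$, i.e.\ a pair of paths $(\alpha,\beta)$ in $X$ with $\alpha(0)=x$, $\alpha(1)=x_0$, $\beta(0)=x_0$, $\beta(1)=x$, and $\bar\pi(x,\alpha,\beta)=x$.

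The key observation is that this fibration is $G$-fibre-homotopy equivalent over $X$ to the based path fibration $P_{x_0}X\to X$, $\lambda\mapsto\lambda(1)$ (paths starting at $x_0$), whose equivariant sectional category is $\ct_G(X)$ by \Cref{fund} (the inclusion $\{x_0\}\hookrightarrow X$ factors through $P_{x_0}X$ and conversely $P_{x_0}X$ deformation $G$-retracts onto the fibre, so $\sct_G$ of the evaluation equals $\ct_G(X)$). To build the $G$-equivalence, I would send $(x,\alpha,\beta)\mapsto \bar\alpha$ (the reverse path, running from $x_0$ to $x$), which is a $G$-map over $X$ since reversal is $G$-equivariant and $x_0$ is a $G$-fixed point; and in the other direction send $\lambda\mapsto (\lambda(1),\bar\lambda,\lambda)$. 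These are mutually $G$-homotopy inverse over $X$ up to the standard concatenation-of-constant-path homotopies, which are canonical and hence $G$-equivariant since everything is built from path reversal, concatenation, and reparametrisation applied to $G$-paths pinned at the fixed point $x_0$. By \Cref{ineq} (applied in both directions, or by the fibrewise version of $G$-homotopy invariance of $\sct_G$), this yields $\sct_G(\bar\pi)=\sct_G(P_{x_0}X\to X)=\ct_G(X)$, and combining with \Cref{thm: DG equals secat} finishes the proof.

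An alternative, more elementary route avoids the fibre-homotopy-equivalence bookkeeping: show directly that a $G$-invariant open $U\subseteq X$ admits a $G$-homotopy $i_1|_U\simeq_G i_2|_U$ if and only if $U$ is $G$-categorical. For one direction, a $G$-homotopy $H\colon U\times I\to X\times X$ from $(id,c_{x_0})$ to $(c_{x_0},id)$ has components $H=(H_1,H_2)$; then $H_1$ is a $G$-homotopy in $X$ from $id_U$ to $c_{x_0}$ on $U$, so $U$ is $G$-categorical (its inclusion is $G$-homotopic to a constant at the fixed point, hence to a map into a single orbit). For the converse, if $\mathrm{inc}_U\simeq_G c$ where $c$ has image in one orbit $G\cdot z$; using $G$-connectedness one arranges the orbit to be $G\cdot x_0=\{x_0\}$ (the fixed orbit), so $\mathrm{inc}_U\simeq_G c_{x_0}$ in $X$; call this $G$-homotopy $K$. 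Then $(K,c_{x_0})$ is a $G$-homotopy $U\times I\to X\times X$ from $(id,c_{x_0})$ to $(c_{x_0},c_{x_0})$, and $(c_{x_0},K)$ runs from $(c_{x_0},c_{x_0})$ to $(c_{x_0},id)$; concatenating gives $i_1|_U\simeq_G i_2|_U$. Passing to covers yields $\D_G(i_1,i_2)=\ct_G(X)$.

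The main obstacle in either approach is the same subtle point: moving from ``$\mathrm{inc}_U$ is $G$-homotopic to a map into a single orbit'' to ``$\mathrm{inc}_U$ is $G$-homotopic to the constant map at the fixed point $x_0$.'' This requires the hypothesis $X^G\neq\emptyset$ together with $G$-connectedness of $X$: one needs a $G$-path (on the relevant fixed-point sets, compatibly across all closed subgroups $H$) connecting a point of the given orbit to $x_0$, and then a $G$-homotopy sliding the constant-orbit map to $c_{x_0}$. This is precisely the content behind \Cref{fund}/\cite[Corollary 4.7]{EqTC}, so I would cite that result rather than reprove it; with it in hand, the argument above is routine. A secondary technical care-point is ensuring the concatenation homotopies are genuinely $G$-equivariant, which holds because concatenation and reversal commute with the $G$-action and the pinning point $x_0$ is $G$-fixed.
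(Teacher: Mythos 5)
Your proof is correct. Your ``alternative, more elementary route'' is in fact essentially the paper's own proof: given a $G$-homotopy $H=(H_1,H_2)\colon U\times I\to X\times X$ from $i_1|_U$ to $i_2|_U$, the first component $H_1$ exhibits $U$ as $G$-categorical; conversely, from a $G$-homotopy $K\colon \mathrm{inc}_U\simeq_G c_{x_0}$ (obtained via \cite[Lemma~3.14]{EqTC}, which is the precise ingredient for the ``orbit to fixed point'' step you flag, rather than Remark~\ref{fund}) one concatenates $(K,c_{x_0})$ with $(c_{x_0},K)$ to get $i_1|_U\simeq_G i_2|_U$. Your primary route, via \Cref{thm: DG equals secat} together with a comparison of $\sct_G(\bar{\pi})$ against $\sct_G$ of the based path fibration $P_{x_0}X\to X$ (itself identified with $\ct_G(X)$ using the $G$-contraction of $P_{x_0}X$ onto the constant path plus Remark~\ref{fund} and Proposition~\ref{G-equiv-Gsecat}), is a genuinely different and correct argument that stays entirely inside the sectional-category formalism developed in Section~\ref{sec:eq-secat}, at the cost of more bookkeeping. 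One inaccuracy there: the maps $\phi\colon(x,\alpha,\beta)\mapsto\bar{\alpha}$ and $\psi\colon\lambda\mapsto(\lambda(1),\bar{\lambda},\lambda)$ are \emph{not} mutually $G$-homotopy inverse over $X$---the composite $\psi\circ\phi$ sends $(x,\alpha,\beta)$ to $(x,\alpha,\bar{\alpha})$, and $\bar{\alpha}$ and $\beta$ need not be homotopic rel endpoints in general. Happily this is irrelevant: applying Lemma~\ref{ineq} once in each direction requires only a $G$-map over $X$ each way, which you have, so the chain $\sct_G(\bar{\pi})=\sct_G(P_{x_0}X\to X)=\ct_G(X)$ stands; you should simply drop the mutual-inverse assertion.
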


\begin{proof}
Suppose $U \subseteq X$ is a $G$-categorical subset. 
Since $X$ is $G$-connected and admits a global fixed point $x_0 \in X^G$, 
\cite[Lemma~3.14]{EqTC} provides a $G$-homotopy $H \colon U \times I \to X$ with 
$H(u,0)=u$ and $H(u,1)=x_0$ for all $u\in U$.  
Define
\[
F \colon U \times I \to X \times X, 
\qquad 
F(u,t) =
\begin{cases}
(H(u,2t), x_0), & 0 \leq t \leq \tfrac{1}{2}, \\[4pt]
(x_0, H(u, 2-2t)), & \tfrac{1}{2} \leq t \leq 1,
\end{cases}
\]
which is a $G$-map yielding a $G$-homotopy between $i_1|_U$ and $i_2|_U$.  
Applying this to a $G$-categorical cover of $X$ gives 
$\D_G(i_1,i_2) \leq \ct_G(X)$.

Conversely, let $U \subseteq X$ be a $G$-invariant open set admitting a $G$-homotopy 
$H' \colon U \times I \to X \times X$ 
between $i_1|_U$ and $i_2|_U$.  
Projecting onto the first factor, define
\[
F'(u,t) = pr_1^X\big(H'(u,t)\big).
\]
Then $F'$ is a $G$-homotopy between $\mathrm{inc}_U$ and the constant map $c_{x_0}$, showing that $U$ is $G$-categorical.  
Applying this to a $G$-invariant open cover yields $\D_G(i_1,i_2) \geq \ct_G(X)$.

Hence $\D_G(i_1,i_2) = \ct_G(X)$.
\end{proof}

\begin{remark}
This identity mirrors the classical relationship between LS category and homotopic distance, 
with the additional requirement of $G$-connectedness and the presence of a global fixed point ensuring that the equivariant structure behaves as expected. 
\end{remark}

\bigskip

We now turn to the cohomological aspects of the invariant. We establish several lower bounds for the equivariant homotopic distance, formulated in terms of Borel cohomology with coefficients in a fixed commutative ring $R$.  These results extend the classical cohomological estimates for homotopic distance to the equivariant setting.

\begin{theorem}\label{thm: cohomological lower bounds}
Let $f, g \colon X \to Y$ be $G$-maps, and let $\Delta \colon Y \to Y \times Y$ denote the diagonal map. Then:
\begin{enumerate}
\item Suppose there exist classes $z_1, \dots, z_k \in H_G^*(Y \times Y; R)$ such that 
\((\Delta^h_G)^*(z_i) = 0\) for all $1 \leq i \leq k$, and 
\((f, g)^*(z_1 \smile \dots \smile z_k) \neq 0\).  
Then
\[
\D_G(f, g) \;\geq\; k.
\]

\item Let 
\[
\mathcal{J}_{f,g} := \mathrm{Im}\!\big((f^h_G)^* - (g^h_G)^*\big) \;\subseteq\; H_G^*(X;R).
\]
Then
\[
\D_G(f, g) \;\geq\; \mathrm{nil}(\mathcal{J}_{f,g}),
\]
where $\mathrm{nil}(\mathcal{J}_{f,g})$ denotes the nilpotency of the ideal $\mathcal{J}_{f,g}$.
\end{enumerate}
\end{theorem}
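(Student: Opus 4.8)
The plan is to reduce both statements to the cohomological lower bound for equivariant sectional category (\Cref{Gcohom}) via the identification $\D_G(f,g)=\sct_G(\bar\pi_Y)$ from \Cref{thm: DG equals secat}. For part (1), I would first observe that the homotopy orbit construction $(-)^h_G = EG\times_G(-)$ preserves pullbacks up to homotopy (it is a homotopy-invariant functor built from a product and a quotient, and the free path fibration pulls back to a fibration), so applying $(-)^h_G$ to the pullback square \eqref{eq:pullback of free path fib} yields, up to homotopy, a pullback square in which $(\bar\pi_Y)^h_G$ sits over $(f,g)^h_G\colon X^h_G\to (Y\times Y)^h_G$ and $(\pi_Y)^h_G$ is (homotopy equivalent to) the diagonal map's fibration; in particular $(\pi_Y)^h_G$ factors the diagonal $\Delta^h_G\colon Y^h_G\to (Y\times Y)^h_G$ up to homotopy through a homotopy equivalence $Y^h_G\simeq (Y^I)^h_G$. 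Consequently, for a class $z\in H^*_G(Y\times Y;R)$ with $(\Delta^h_G)^*(z)=0$ we get $(\pi_Y)^h_G{}^*\big((f,g)^h_G{}^*\big)$-type vanishing after pulling back along $p^h_G$: concretely, set $u_i := (\bar\pi_Y)^h_G{}^*\big((f,g)^h_G{}^*(z_i)\big)$ — wait, more cleanly, set $u_i$ to be the pullback of $z_i$ to $\mathcal P(f,g)^h_G$ is not quite what we want; instead I would directly check that $u_i := ((f,g)\circ\bar\pi_Y)$ is not the relevant map. Let me restate: the correct choice is to work in $H^*_G(X;R)$ with classes $w_i := (f,g)^*(z_i) = (f,g)^h_G{}^*(z_i)$, show $(\bar\pi_Y)^h_G{}^*(w_i)=0$ using that $\bar\pi_Y$ lifts (up to homotopy) to $Y^I$ over $Y\times Y$ together with the hypothesis $(\Delta^h_G)^*(z_i)=0$ and the homotopy commutativity $\pi_Y\simeq \Delta$-compatible square, and then invoke \Cref{Gcohom}: since $w_1\smile\cdots\smile w_k = (f,g)^*(z_1\smile\cdots\smile z_k)\neq 0$ and each $(\bar\pi_Y)^h_G{}^*(w_i)=0$, we conclude $\sct_G(\bar\pi_Y)\ge k$, hence $\D_G(f,g)\ge k$.

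For part (2), I would argue directly from the covering definition. Suppose $\D_G(f,g)=n$, with $G$-invariant open cover $U_0,\dots,U_n$ of $X$ on each of which $f|_{U_j}\simeq_G g|_{U_j}$. Applying $(-)^h_G$, the sets $(U_j)^h_G = EG\times_G U_j$ form an open cover of $X^h_G$, and on each one $f^h_G$ and $g^h_G$ agree up to homotopy, so the restriction homomorphism $H^*_G(X;R)\to H^*_G(U_j;R)$ kills every element of $\mathcal J_{f,g}=\mathrm{Im}((f^h_G)^*-(g^h_G)^*)$, i.e.\ $\mathcal J_{f,g}$ restricts to zero on each $(U_j)^h_G$. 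Then a standard relative-cup-product argument (the same one underpinning the classical category weight / nilpotency bound): given $a_0,\dots,a_n\in\mathcal J_{f,g}$, each $a_j$ lifts to a relative class $\bar a_j\in H^*_G(X,(U_j)^h_G;R)$ mapping to $a_j$; the cup product $\bar a_0\smile\cdots\smile\bar a_n$ lives in $H^*_G\big(X,\bigcup_j (U_j)^h_G;R\big) = H^*_G(X,X^h_G;R)=0$, so $a_0\smile\cdots\smile a_n=0$. Since the $a_j$ were arbitrary in $\mathcal J_{f,g}$, this gives $\mathcal J_{f,g}^{\,n+1}=0$, i.e.\ $\mathrm{nil}(\mathcal J_{f,g})\le n+1 = \D_G(f,g)+1$, which (with the convention that $\mathrm{nil}$ of a nilpotent ideal $J$ is the least $N$ with $J^{N}=0$, so that $\mathrm{nil}(J)\le \D_G+1$ rearranges to $\D_G\ge \mathrm{nil}(J)-1$... ) — here I would need to pin down the indexing convention for $\mathrm{nil}$ used in the paper so the inequality reads $\D_G(f,g)\ge \mathrm{nil}(\mathcal J_{f,g})$ exactly; with the convention $\mathrm{nil}(J)=$ (largest $m$ with $J^m\neq 0$) the argument above yields precisely $\D_G(f,g)\ge \mathrm{nil}(\mathcal J_{f,g})$.

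The main obstacle I anticipate is not the cup-length bookkeeping but the homotopy-theoretic compatibility in part (1): making precise that applying $(-)^h_G$ to the pullback \eqref{eq:pullback of free path fib} behaves well enough that $(\bar\pi_Y)^h_G{}^*$ annihilates $(f,g)^h_G{}^*(z_i)$ whenever $(\Delta^h_G)^*(z_i)=0$. The clean way around this is to avoid pullbacks of orbit spaces entirely and instead note that $\bar\pi_Y$ admits, on all of $X$, a factorization up to $G$-homotopy relevant to the square — namely $(f,g)\circ\bar\pi_Y \simeq_G \pi_Y\circ p$ and $\pi_Y\simeq_G \Delta$ composed with the $G$-homotopy equivalence $Y\hookrightarrow Y^I$ — so that $(\bar\pi_Y)^h_G{}^*(f,g)^h_G{}^* = p^h_G{}^*(\pi_Y)^h_G{}^*$ and $(\pi_Y)^h_G{}^*$ factors through $(\Delta^h_G)^*$ up to the induced isomorphism; then $(\Delta^h_G)^*(z_i)=0$ forces $(\bar\pi_Y)^h_G{}^*((f,g)^h_G{}^*(z_i))=0$, and \Cref{Gcohom} finishes it. Part (2) should go through with no real obstruction beyond fixing the convention for $\mathrm{nil}$.
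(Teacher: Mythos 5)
Your proposal is correct and, after the mid-stream course correction in part (1), lands on essentially the same argument the paper uses: set $w_i=((f,g)^h_G)^*(z_i)$, use $(f,g)\circ\bar\pi_Y=\pi_Y\circ p$ together with $\Delta\simeq_G\pi_Y$ (via the $G$-homotopy equivalence $Y\simeq_G Y^I$) to get $((\bar\pi_Y)^h_G)^*(w_i)=0$, and invoke the equivariant Schwarz-type bound (\Cref{Gcohom}); and for part (2) the same relative cup-product argument via the long exact sequence of the pairs $(X,U_i)$. Your worry about the $\mathrm{nil}$ convention is resolved exactly as you guessed — the paper takes $\mathrm{nil}(J)$ to be the largest $m$ with $J^m\neq 0$, so showing any $(n+1)$-fold product vanishes gives $\mathrm{nil}(\mathcal J_{f,g})\leq n$ directly.
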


\begin{proof}
(1) By \Cref{thm: DG equals secat}, we have $\D_G(f,g)=\sct_G(\bar{\pi}_Y)$.  
Applying Proposition~\ref{Gcohom} (see also \cite[Theorem~2.2]{SD}), it follows that
\[
\D_G(f,g) \;\geq\; \mathrm{nil}\!\big(\ker((\bar{\pi}_Y)^h_G)^*\big).
\]
Now assume $z_1, \dots, z_k \in H_G^*(Y \times Y; R)$ satisfy $(\Delta^h_G)^*(z_i) = 0$ for all $i$, and $(f, g)^*(z_1 \smile \dots \smile z_k) \neq 0$.  
Set $y_i := ((f,g)^h_G)^*(z_i) \in H_G^*(X;R)$.  
Using the pullback diagram \eqref{eq:pullback of free path fib} and the relation $\Delta^h_G \simeq_G (\pi_Y)^h_G$, we compute:
\[
((\bar{\pi}_Y)^h_G)^*(y_i) 
= (p^h_G)^*((\Delta^h_G)^*(z_i)) 
= 0.
\]
Thus each $y_i$ lies in $\ker((\bar{\pi}_Y)^h_G)^*$.  
Moreover,
\[
y_1 \smile \dots \smile y_k
= ((f,g)^h_G)^*(z_1 \smile \dots \smile z_k) \;\neq\; 0.
\]
Hence $\mathrm{nil}(\ker((\bar{\pi}_Y)^h_G)^*) \geq k$, giving the desired inequality.

\smallskip
(2) Suppose $\D_G(f,g) \leq n$.  
Then there exists a $G$-invariant open cover $\{U_0, \dots, U_n\}$ of $X$ such that $f|_{U_i} \simeq_G g|_{U_i}$ for each $i$.  
For every $0 \leq i \leq n$, consider the long exact sequence in Borel cohomology of the pair $(X,U_i)$, fitting into the commutative diagram
\[
\xymatrix{
& & & H^{m_i}_G(Y) \ar[d]_{(f^h_G)^*-(g^h_G)^*} \ar[drr]^{\hspace{0.7cm}((f|_{U_i})^h_G)^*-((g|_{U_i})^h_G)^*} & & \\
\cdots \ar[r] & H^{m_i}_G(X,U_i) \ar[rr]_{((q_i)^h_G)^*} & & H^{m_i}_G(X) \ar[rr]_{((\mathrm{inc}_{U_i})^h_G)^*} & & H^{m_i}_G(U_i) \ar[r] & \cdots
}
\]
It follows that $\mathcal{J}_{f,g} \subseteq \ker((\mathrm{inc}_{U_i})^*)$.  
Therefore, for each $y_i \in \mathcal{J}_{f,g}$, there exists $x_i \in H^{m_i}(X,U_i)$ with $((q_i)^h_G)^*(x_i) = y_i$.  
Now compute:
\[
\begin{aligned}
y_0 \smile \dots \smile y_n
&= ((q_0)^h_G)^*(x_0) \smile \dots \smile ((q_n)^h_G)^*(x_n) \\
&= (q^h_G)^*(x_0 \smile \dots \smile x_n) \\
&= (q^h_G)^*(0) \;=\; 0,
\end{aligned}
\]
since $x_0 \smile \dots \smile x_n \in H^m(X,X) = 0$ (with $m = \sum m_i$).  
Thus $\mathrm{nil}(\mathcal{J}_{f,g}) \leq n$, and the result follows.
\end{proof}


\begin{example}${}$

\begin{enumerate}
    \item In the case of the maps $i_1,i_2 \colon X \to X \times X$ introduced earlier, we have 
    \(\mathcal{J}_{i_1,i_2} = H_G^*(X)\).  
    Hence, the lower bound in part~(2) of \Cref{thm: cohomological lower bounds} recovers the cohomological lower bound for the equivariant LS category established by the first author together with Arora in \cite{SD}.
    
    \item If $f = pr_1^X$ and $g = pr_2^X$, then $(pr_1^X, pr_2^X) = id_{X \times X}$.  
    In this case, part~(1) of \Cref{thm: cohomological lower bounds} recovers the cohomological lower bound for the equivariant topological complexity proved by Colman and Grant in \cite[Theorem~5.15]{EqTC}.  
    Furthermore, in part~(2), if $R$ is a field, then 
    \[
    \mathcal{J}_{pr_1^X,pr_2^X} = \ker\!\big((\Delta_G^h)^*\big),
    \] 
    and we again recover the same lower bound from \cite[Theorem~5.15]{EqTC}.
\end{enumerate}
\end{example}

Although equivariant cohomology, such as Borel cohomology, provides a natural framework for studying group actions, 
the complexity of computing cup products often makes it difficult to apply in practice. 
To overcome this difficulty, one can instead use ordinary (nonequivariant) cohomology to obtain effective lower bounds 
for the equivariant homotopic distance.

\begin{proposition}\label{prop: cohomological bound 2}
Let $\bar{\pi}_Y$ be as in the pullback diagram \eqref{eq:pullback of free path fib}. 
If $\bar{\pi}_Y' \colon E/G \to X/G$ denotes the induced map on orbit spaces, then
\[
\D_G(f,g)\;\geq\; \mathrm{nil}\!\big(\ker((\bar{\pi}_Y')^*)\big),
\]
where $(\bar{\pi}_Y')^* \colon H^*(X/G) \to H^*(E/G)$ is the induced map in singular cohomology.
\end{proposition}

\begin{proof}
By \cite[Theorem~2.5(1)]{SD}, the map $\bar{\pi}_Y' \colon E/G \to X/G$ is a fibration and 
\(\sct(\bar{\pi}_Y') \leq \sct_G(\bar{\pi}_Y)\). 
Since $\D_G(f,g) = \sct_G(\bar{\pi}_Y)$ by \Cref{thm: DG equals secat}, the inequality follows directly from \cite[Theorem~4]{Sva}.
\end{proof}

Generalizing Schwarz’s dimension–connectivity upper bound for sectional category, Grant established in \cite[Theorem 3.5]{G-symmetrized} the corresponding equivariant analogue for the equivariant sectional category. This result was later extended by Daundkar and Arora \cite{SD}, who derived an equivariant homotopy dimension–connectivity upper bound.


Using \cite[Theorem 2.12]{SD}, we now establish the equivariant homotopy dimension–connectivity upper bound for the equivariant homotopic distance.

\begin{proposition}\label{prop: dim-conn upper bound}
Let $f,g \colon X \to Y$ be $G$-maps, with $X$ a $G$-CW complex of dimension at least $2$.  
Suppose that $Y$ is $s$–$G$-connected; that is, for every closed subgroup $H \leq G$, the fixed-point set $Y^H$ is $s$-connected. Then
\[
\D_G(f,g) \;<\; \frac{\mathrm{hdim}_G(X)+1}{s+1}.
\]
\end{proposition}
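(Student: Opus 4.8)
The plan is to reduce the statement to the equivariant homotopy dimension–connectivity upper bound for the equivariant sectional category, Proposition \ref{connect} (stated as \cite[Theorem 2.12]{SD} in the text). By Theorem \ref{thm: DG equals secat} we have $D_G(f,g)=\sct_G(\bar{\pi}_Y)$, where $\bar{\pi}_Y:\mathcal{P}(f,g)\to X$ is the pullback of the free path fibration $\pi_Y:Y^I\to Y\times Y$ along $(f,g):X\to Y\times Y$. Thus it suffices to verify the hypotheses of Proposition \ref{connect} for the map $\bar{\pi}_Y$: namely that it is a Serre $G$-fibration, that its base $X$ is a $G$-CW complex of dimension at least $2$ (given), and that for every closed subgroup $H\leq G$ the fixed-point map $(\bar{\pi}_Y)^H:\mathcal{P}(f,g)^H\to X^H$ is an $m$-equivalence for a suitable $m$; plugging in $m=s$ and using $\dim X = \mathrm{hdim}_G(X)$-robustly via $G$-homotopy invariance then yields the claimed inequality.

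The first step is to recall that $\pi_Y:Y^I\to Y\times Y$ is a $G$-fibration (indeed a Hurewicz $G$-fibration), and that $G$-fibrations are stable under pullback, so $\bar{\pi}_Y$ is a $G$-fibration, hence a Serre $G$-fibration. Next I would identify the fibre of $\bar{\pi}_Y$ over a point $x\in X$: it is the space of paths in $Y$ from $f(x)$ to $g(x)$, which is homotopy equivalent to the based loop space $\Omega Y$ when $f(x)$ and $g(x)$ lie in the same path component. The key computation is the connectivity of these fibres after taking $H$-fixed points. Since $(-)^H$ preserves pullbacks (as noted in the excerpt, because it is representable), $(\bar{\pi}_Y)^H$ is the pullback of $(\pi_Y)^H:(Y^I)^H\to (Y\times Y)^H$ along $(f,g)^H:X^H\to (Y\times Y)^H$; moreover $(Y^I)^H=(Y^H)^I$ and $(Y\times Y)^H=Y^H\times Y^H$, so $(\pi_Y)^H$ is precisely the free path fibration of $Y^H$. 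Its fibre over a point is a path space in $Y^H$, and since $Y^H$ is $s$-connected, such a path space is $(s-1)$-connected. A standard argument (long exact sequence of the fibration $(\bar{\pi}_Y)^H$, or the fact that a fibration with $(s-1)$-connected fibres is an $s$-equivalence on total-to-base maps) shows $(\bar{\pi}_Y)^H$ is an $s$-equivalence.

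With these facts in hand, Proposition \ref{connect} applied to $\bar{\pi}_Y$ with $m=s$ gives $\sct_G(\bar{\pi}_Y)<\frac{\mathrm{hdim}(X)+1}{s+1}$; I would then observe that $\mathrm{hdim}(X)=\mathrm{hdim}_G(X)$ in the notation of the excerpt, and that $\sct_G$ is a $G$-homotopy invariant (Proposition \ref{G-equiv-Gsecat}), so we may replace $X$ by a minimal-dimensional $G$-CW model without changing either side, allowing the hypothesis $\dim(X)\geq 2$ and the use of $\mathrm{hdim}_G(X)$ to coexist cleanly. Combining with $D_G(f,g)=\sct_G(\bar{\pi}_Y)$ finishes the proof. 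The main obstacle I anticipate is the bookkeeping around the fixed-point fibres: one must be careful that $Y^H$ being $s$-connected (in particular nonempty and path-connected) guarantees the path spaces appearing as fibres of $(\bar{\pi}_Y)^H$ are nonempty and $(s-1)$-connected uniformly over the base $X^H$, and that $X^H$ itself may be empty for some $H$ — in which case the $m$-equivalence condition is vacuous and causes no trouble. A secondary point to handle carefully is the precise form of "$m$-equivalence" used in Proposition \ref{connect} (whether it is phrased in terms of $\pi_i$ isomorphisms for $i<m$ and surjection for $i=m$, or shifted by one), so that the substitution $m=s$ is calibrated correctly against the $(s-1)$-connectivity of the fibres.
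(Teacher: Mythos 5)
Your proposal is correct and follows essentially the same route as the paper: apply Theorem \ref{thm: DG equals secat} to rewrite $D_G(f,g)$ as $\sct_G(\bar{\pi}_Y)$, observe that $(-)^H$ preserves pullbacks so that $\bar{\pi}_Y^H$ is the pullback of the free path fibration of $Y^H$, deduce that each $\bar{\pi}_Y^H$ is an $s$-equivalence from the $s$-connectivity of $Y^H$, and invoke Proposition \ref{connect}. The paper's proof is simply a terser version of this; your extra unpacking (identifying the fibres as $(s-1)$-connected path spaces, and noting the notational agreement $\mathrm{hdim}=\mathrm{hdim}_G$) fills in details the paper leaves implicit.
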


\begin{proof}
By \Cref{thm: DG equals secat}, we have $\D_G(f,g)=\sct_G(\bar{\pi}_Y)$.  
The inequality then follows from \Cref{connect}. Indeed, for every closed subgroup $H \leq G$, the map $\pi_Y^H$ is an $s$-equivalence, and since the fixed-point functor $(-)^H$ preserves pullbacks, it follows that $\bar{\pi}_Y^H$ is also an $s$-equivalence.    
\end{proof}

\section{Properties}\label{sec: properties}
In the previous section, we established some foundational results on the equivariant homotopic distance, including cohomological lower bounds and dimension–connectivity type upper bounds. We now turn to a more systematic study of its structural properties. Many of the statements presented here resemble their nonequivariant counterparts, but their proofs require careful attention to the presence of the group action. In some cases, the arguments adapt smoothly, while in others new subtleties appear—such as the behavior of invariant open covers, the role of fixed point sets, or the interplay with equivariant lifting properties. For completeness, and to make these subtleties explicit, we include full proofs rather than leaving them as straightforward adaptations

Our analysis begins with composition inequalities for $G$-maps, and then develops a series of connections with the equivariant Lusternik–Schnirelmann category and the equivariant topological complexity. Along the way, we obtain comparison results, product-type estimates, subgroup restrictions, and finally establish equivariant homotopy invariance.

The proof of the following proposition is completely analogous to the nonequivariant case, which was proved in section $3$ of \cite{macias2022homotopic}.
\begin{proposition}\label{prop:composition-inequalities}
Let $f,g\colon X \to Y$ be $G$-maps.
\begin{enumerate}
    \item If $h\colon Y \to Z$ is a $G$-map, then 
$\D_G(h\circ f,\,h\circ g)\;\leq\;\D_G(f,g).$

    \item If $k\colon Z \to X$ is a $G$-map, then 
$\D_G(f\circ k,\,g\circ k)\;\leq\;\D_G(f,g).$
\end{enumerate}
\end{proposition}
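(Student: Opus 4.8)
The plan is to argue directly from the definition of $\D_G$, exhibiting a suitable $G$-invariant open cover in each case; both inequalities will be essentially formal, resting only on the fact that $G$-homotopies are preserved under post-composition with a $G$-map and under restriction along a $G$-map. I would begin by setting $n := \D_G(f,g)$ — the case $n = \infty$ being vacuous — and fixing a $G$-invariant open cover $\{U_0,\dots,U_n\}$ of $X$ together with $G$-homotopies $H_i \colon U_i \times I \to Y$ satisfying $H_i(-,0) = f|_{U_i}$ and $H_i(-,1) = g|_{U_i}$.

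For part (1), I would post-compose each $H_i$ with $h$: since $h$ is a $G$-map, $h \circ H_i \colon U_i \times I \to Z$ is again a $G$-homotopy, and it witnesses $(h\circ f)|_{U_i} \simeq_G (h\circ g)|_{U_i}$. Hence the very same cover $\{U_0,\dots,U_n\}$ certifies $\D_G(h\circ f, h\circ g) \leq n = \D_G(f,g)$.

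For part (2), I would put $V_i := k^{-1}(U_i)$. As $k$ is a continuous $G$-map, each $V_i$ is a $G$-invariant open subset of $Z$, and $\{V_0,\dots,V_n\}$ covers $Z$ since the $U_i$ cover $X$. Writing $k_i \colon V_i \to U_i$ for the corestriction of $k$, the map $H_i \circ (k_i \times \mathrm{id}_I) \colon V_i \times I \to Y$ is a $G$-homotopy from $(f\circ k)|_{V_i} = f|_{U_i}\circ k_i$ to $(g\circ k)|_{V_i} = g|_{U_i}\circ k_i$. Thus $\{V_0,\dots,V_n\}$ certifies $\D_G(f\circ k, g\circ k) \leq n = \D_G(f,g)$.

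I do not anticipate a real obstacle here: the only points needing a moment's verification are that the preimage of a $G$-invariant open set under a $G$-map is again $G$-invariant and open, and that the composed/restricted homotopies remain $G$-equivariant — both immediate. If a more structural argument is preferred, part (1) also follows from \Cref{ineq} applied to the natural $G$-map $\mathcal{P}(f,g)\to \mathcal{P}(h\circ f, h\circ g)$, $(x,\gamma)\mapsto (x, h\circ\gamma)$, over $X$, and part (2) follows by observing that $\mathcal{P}(f\circ k, g\circ k)$ is the pullback of $\bar{\pi}_Y$ along $k$, so that local $G$-homotopy sections pull back; but the direct covering argument above is shorter and self-contained.
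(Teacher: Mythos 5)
Your argument is correct and follows essentially the same route as the paper: for (1), post-compose the witnessing $G$-homotopies $H_i$ with $h$; for (2), pull back the cover along $k$ and restrict the $H_i$ accordingly. The alternative structural argument you sketch at the end (via $\mathcal{P}(-,-)$ and Lemma~\ref{ineq}) is a valid second route but is not what the paper uses.
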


\medskip

At this point, we introduce the equivariant version of the Lusternik–Schnirelmann category for $G$-maps.

\begin{definition}
Let $f\colon X \to Y$ be a $G$-map.  
The \emph{equivariant LS category} of $f$, denoted $\ct_G(f)$, is the least integer $n\geq 0$ such that $X$ admits a cover $\{U_i\}_{i=0}^n$ by $n+1$ $G$-invariant open subsets with the property that, for each $i$, the restriction $f|_{U_i}$ is $G$-homotopic to a $G$-map $U_i\to Y$ whose image lies in the orbit of some $y_i\in Y$.  
If no such $n$ exists, we set $\ct_G(f)=\infty$.
\end{definition}

In particular, for any $G$-space $X$, one has
\[
\ct_G(X)=\ct_G(id_X).
\]

This invariant can also be described in terms of the equivariant homotopic distance to a constant map:

\begin{proposition}\label{ct-mor-car}
Let $f\colon X\to Y$ be a $G$-map, where $Y$ is $G$-connected and $Y^G\neq \emptyset$.  
If $y_0\in Y^G$, then
\[
\ct_G(f)=\D_G(f,c_{y_0}),
\]
where $c_{y_0}\colon X\to Y$ denotes the constant map at $y_0$.
\end{proposition}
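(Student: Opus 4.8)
The plan is to prove the two inequalities $\ct_G(f) \le D_G(f, c_{y_0})$ and $D_G(f, c_{y_0}) \le \ct_G(f)$ separately, in both cases by translating a cover witnessing one invariant into a cover witnessing the other, exactly as in the analogous non-equivariant statement of Mac\'ias-Virg\'os and Mosquera-Lois. The key structural input is \cite[Lemma 3.14]{EqTC}, already used in the proof that $\ct_G(X) = D_G(i_1, i_2)$: on a $G$-connected $G$-space with a $G$-fixed point, a $G$-map from a $G$-invariant subset into an orbit can be $G$-homotoped to a $G$-map taking the single fixed value $y_0$. This is what lets us replace ``$G$-homotopic to a map into an orbit of some $y_i$'' by ``$G$-homotopic to $c_{y_0}$'' uniformly across all pieces of a cover.

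\emph{Step 1: $D_G(f, c_{y_0}) \le \ct_G(f)$.} Suppose $\ct_G(f) = n$ and let $\{U_0, \dots, U_n\}$ be a $G$-invariant open cover of $X$ with $f|_{U_i}$ $G$-homotopic to a $G$-map $U_i \to Y$ landing in the orbit of some $y_i \in Y$. Since $Y$ is $G$-connected and $Y^G \ne \emptyset$, apply \cite[Lemma 3.14]{EqTC} to the $G$-invariant subset $U_i$ (or rather: first $G$-homotope $f|_{U_i}$ into the orbit, then use the $G$-contractibility of an orbit onto $y_0$ inside the $G$-connected $Y$) to conclude $f|_{U_i} \simeq_G c_{y_0}|_{U_i}$. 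Thus the same cover witnesses $D_G(f, c_{y_0}) \le n$.

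\emph{Step 2: $\ct_G(f) \le D_G(f, c_{y_0})$.} Suppose $D_G(f, c_{y_0}) = n$ and let $\{U_0, \dots, U_n\}$ be a $G$-invariant open cover with $f|_{U_i} \simeq_G c_{y_0}|_{U_i}$. Since $c_{y_0}|_{U_i}$ is literally a $G$-map into $\{y_0\}$, which is the orbit of the fixed point $y_0$, the same cover witnesses $\ct_G(f) \le n$. (Here $\{y_0\} = G\cdot y_0$ precisely because $y_0 \in Y^G$.) Combining Steps 1 and 2 gives the equality.

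\emph{Main obstacle.} The only genuinely delicate point is Step 1: one must be careful that ``$G$-homotopic to a $G$-map whose image lies in an orbit $G \cdot y_i$'' can be upgraded to ``$G$-homotopic to the constant map $c_{y_0}$'' \emph{for the same open set $U_i$}, and this requires both the $G$-connectedness hypothesis (so that the orbit $G\cdot y_i$, which is $G$-homotopy equivalent to $G/G_{y_i}$, can be $G$-deformed toward the fixed point $y_0$ through $Y$) and the nonemptiness of $Y^G$. The cleanest way to package this is to invoke \cite[Lemma 3.14]{EqTC} directly, since that lemma already encapsulates exactly this deformation; the remainder of the argument is the routine cover-transfer bookkeeping, with no issues of $G$-invariance since all the open sets involved are taken unchanged and all homotopies constructed are compositions or restrictions of $G$-homotopies.
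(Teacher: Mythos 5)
Your proposal is correct and follows essentially the same route as the paper: the inequality $\ct_G(f)\le D_G(f,c_{y_0})$ is immediate because $\{y_0\}$ is itself the orbit of the fixed point $y_0$, while the reverse inequality uses \cite[Lemma~3.14]{EqTC} to upgrade a $G$-homotopy into an orbit $O(y_i)$ to a $G$-homotopy to the constant map $c_{y_0}$ on each piece of the cover. The paper simply phrases the argument per single invariant open set rather than over the whole cover, but the content is identical.
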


\begin{proof}
The inequality $\ct_G(f)\leq \D_G(f,c_{y_0})$ is immediate.  
For the converse, suppose $U\subseteq X$ is a $G$-invariant open subset such that $f|_U\simeq_G c$, where $c\colon U\to Y$ takes values in the orbit $O(y)$ of some $y\in Y$.  
By \cite[Lemma~3.14]{EqTC}, the map $c$ is $G$-homotopic to $c_{y_0}|_U\colon U\to Y$.  
Thus $f|_U\simeq_G c_{y_0}|_U$, which establishes the reverse inequality.
\end{proof}

\begin{remark}\label{rem-ct-mor-car}${}$

\begin{enumerate}
    \item If $y_0\in Y^G$, then the inequality 
    \[
        \ct_G(f)\leq \D_G(f,c_{y_0})
    \]
    holds even when $Y$ is not $G$-connected.  
    More generally, $\ct_G(f)\leq \D_G(f,c)$ whenever $c\colon X\to Y$ is a $G$-map with image contained in an orbit $O(y)$, without any assumptions on $Y$.
    \item Under the hypotheses of Proposition~\ref{ct-mor-car}, one actually has 
    \[
        \ct_G(f)=\D_G(f,c),
    \]
    for any $G$-map $c\colon X\to Y$ with image contained in an orbit $O(y)$ (for some, equivalently any, $y\in Y$).
\end{enumerate}
\end{remark}

\bigskip
We proceed to establish inequalities relating the equivariant LS category to the equivariant homotopic distance.  
The first result provides upper bounds for $\ct_G(f)$ in terms of the equivariant categories of the source and target spaces.  
The second one yields a product-type estimate for the equivariant homotopic distance between two $G$-maps in terms of their equivariant LS categories.

\begin{corollary}
Let $f\colon X \to Y$ be a $G$-map. Then:
\begin{enumerate}
    \item If $Y$ is $G$-connected and $Y^G\neq \emptyset$, then $\ct_G(f)\;\leq\;\ct_G(Y).$
    
    \item If $X$ is $G$-connected and $X^G\neq \emptyset$, then $\ct_G(f)\;\leq\;\ct_G(X).$
\end{enumerate}
\end{corollary}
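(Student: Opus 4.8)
The plan is to deduce both inequalities from the material already in place, namely the identity $\ct_G(f) = D_G(f, c_{y_0})$ from \Cref{ct-mor-car} together with the composition inequalities of \Cref{prop:composition-inequalities}. The key observation is that a constant $G$-map factors through any other constant $G$-map in a trivial way, so composing with $f$ on either side turns a categorical cover of the source or target into a cover witnessing $\ct_G(f)$.

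For part (1), I would pick a fixed point $y_0 \in Y^G$ (which exists by hypothesis) and write $c_{y_0}^Y : Y \to Y$ for the constant $G$-map at $y_0$ and $c_{y_0} : X \to Y$ for the constant $G$-map at $y_0$ on the source $X$. The elementary but crucial fact is that $c_{y_0} = c_{y_0}^Y \circ f$. Applying part (1) of \Cref{prop:composition-inequalities} with $h = f$ replaced appropriately — more precisely, applying the composition inequality to the $G$-maps $\mathrm{id}_Y, c_{y_0}^Y : Y \to Y$ precomposed with $f : X \to Y$ — gives
\[
D_G(\mathrm{id}_Y \circ f,\; c_{y_0}^Y \circ f) \leq D_G(\mathrm{id}_Y, c_{y_0}^Y).
\]
The left-hand side is $D_G(f, c_{y_0})$, which equals $\ct_G(f)$ by \Cref{ct-mor-car} (valid since $Y$ is $G$-connected with $Y^G \neq \emptyset$), and the right-hand side is $D_G(\mathrm{id}_Y, c_{y_0}^Y) = \ct_G(\mathrm{id}_Y) = \ct_G(Y)$ by the same proposition applied to $\mathrm{id}_Y$ (together with $\ct_G(Y) = \ct_G(\mathrm{id}_Y)$). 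This yields $\ct_G(f) \leq \ct_G(Y)$.

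For part (2), the roles of source and target are swapped: now $X$ is $G$-connected with a fixed point $x_0 \in X^G$. Set $y_0 := f(x_0) \in Y^G$. Then the constant $G$-map $c_{y_0} : X \to Y$ factors as $c_{y_0} = f \circ c_{x_0}$, where $c_{x_0} : X \to X$ is the constant $G$-map at $x_0$. Applying part (1) of \Cref{prop:composition-inequalities} with the outer map $f : X \to Y$ postcomposed with the pair $\mathrm{id}_X, c_{x_0} : X \to X$ gives $D_G(f \circ \mathrm{id}_X, f \circ c_{x_0}) \leq D_G(\mathrm{id}_X, c_{x_0})$, i.e. $D_G(f, c_{y_0}) \leq D_G(\mathrm{id}_X, c_{x_0}) = \ct_G(X)$. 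Here one must be slightly careful: to rewrite the left-hand side as $\ct_G(f)$ via \Cref{ct-mor-car} one needs $Y$ to be $G$-connected with $Y^G \neq \emptyset$; if that is not assumed, one instead invokes the second bullet of \Cref{rem-ct-mor-car}, which gives $\ct_G(f) \leq D_G(f, c_{y_0})$ for a constant map at any point of $Y$ without connectivity hypotheses on $Y$ — indeed here $c_{y_0}$ takes values in the single orbit $\{y_0\}$ since $y_0$ is fixed. Chaining these gives $\ct_G(f) \leq \ct_G(X)$.

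The main obstacle, such as it is, is not conceptual but bookkeeping: one must state the composition inequality in the right form (the version in part (1) of \Cref{prop:composition-inequalities} composes a single $G$-map with each of $f$ and $g$, and I need to feed it the pair $(\mathrm{id}, c_{\mathrm{pt}})$ in each of the two positions), and one must track exactly which connectivity hypotheses are needed to invoke \Cref{ct-mor-car} versus the weaker \Cref{rem-ct-mor-car}. No genuinely hard step is involved — the corollary is an immediate formal consequence of the two results it follows.
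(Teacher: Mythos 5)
Your proof is substantively identical to the paper's: both parts hinge on $\ct_G(f)=D_G(f,c_{y_0})$ (or the one-sided inequality from \Cref{rem-ct-mor-car} when $Y$ need not be $G$-connected) combined with the composition inequalities of \Cref{prop:composition-inequalities}, and you even flag the same subtlety in part (2) that the paper handles by writing $\leq$ rather than $=$. One small labeling slip: in your part (1) you are precomposing the pair $(\mathrm{id}_Y, c_{y_0}^Y)$ with $f$, which is part (2) of \Cref{prop:composition-inequalities} (the precomposition case), not part (1); the mathematics is nonetheless correct.
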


\begin{proof}
(1) Let $y_0\in Y^G$. Then
\[
\ct_G(f)=\D_G(f,c_{y_0})
       =\D_G(id_Y\circ f,\,c_{y_0}\circ f)
       \leq \D_G(id_Y,c_{y_0})
       =\ct_G(Y).
\]

(2) Let $x_0\in X^G$. Then $f(x_0)\in Y^G$, and hence
\[
\ct_G(f)\leq \D_G(f,c_{f(x_0)})
          =   \D_G(f\circ id_X,\,f\circ c_{x_0})
          \leq \D_G(id_X,c_{x_0})
          =   \ct_G(X).
\]
\end{proof}

\begin{corollary}\label{ineq-D-cat}
Let $f,g\colon X \to Y$ be $G$-maps.  
If $Y$ is $G$-connected and $Y^G \neq \emptyset$, then 
\[
    \D_G(f,g)\;\leq\;(\ct_G(f)+1)\cdot(\ct_G(g)+1).
\]
\end{corollary}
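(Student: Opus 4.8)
The plan is to reduce this to the analogous \emph{non-equivariant} product inequality for homotopic distance, which appears in Macías-Virgós and Mosquera-Lois's work, by running its proof directly in the $G$-category. Concretely, write $\ct_G(f)=m$ and $\ct_G(g)=n$. Using \Cref{ct-mor-car} (valid since $Y$ is $G$-connected with $Y^G\neq\emptyset$), fix $y_0\in Y^G$ and note $f\simeq_G c_{y_0}$ on each member of a $G$-invariant open cover $\{U_0,\dots,U_m\}$ of $X$, and $g\simeq_G c_{y_0}$ on each member of a $G$-invariant open cover $\{V_0,\dots,V_n\}$ of $X$. Form the $(m+1)(n+1)$ intersections $W_{ij}:=U_i\cap V_j$; these are again $G$-invariant open sets and cover $X$. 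On each $W_{ij}$ we have $f|_{W_{ij}}\simeq_G c_{y_0}|_{W_{ij}}\simeq_G g|_{W_{ij}}$, hence $f|_{W_{ij}}\simeq_G g|_{W_{ij}}$. Therefore $X$ is covered by $(m+1)(n+1)$ $G$-invariant open sets on each of which $f$ and $g$ are $G$-homotopic, which gives $\D_G(f,g)\le (m+1)(n+1)-1=(\ct_G(f)+1)(\ct_G(g)+1)-1 < (\ct_G(f)+1)(\ct_G(g)+1)$; in fact the cleaner bound $\D_G(f,g)+1\le(\ct_G(f)+1)(\ct_G(g)+1)$ is exactly what the argument yields.

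The key steps, in order, are: (i) invoke \Cref{ct-mor-car} to replace both $\ct_G(f)$ and $\ct_G(g)$ by equivariant homotopic distances to the \emph{same} constant $G$-map $c_{y_0}$ — this is where $G$-connectedness and $Y^G\neq\emptyset$ are used, and it is the step that makes the two covers ``compatible''; (ii) extract the two $G$-invariant open covers and take pairwise intersections, checking they remain $G$-invariant, open, and form a cover of $X$; (iii) on each intersection, chain the two $G$-homotopies $f\simeq_G c_{y_0}\simeq_G g$ (concatenation of $G$-homotopies, which is legitimate since the homotopy parameter carries the trivial action and concatenation is performed pointwise) to conclude $f\simeq_G g$ there; (iv) count the pieces and read off the inequality. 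Alternatively one could phrase step (iii)–(iv) via a general ``triangle-type'' lemma $\D_G(f,g)+1\le(\D_G(f,h)+1)(\D_G(h,g)+1)$ applied with $h=c_{y_0}$, but the direct cover argument is self-contained and avoids needing such a lemma.

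I do not expect a genuine obstacle here: the only points requiring a sentence of care are that the homotopy interval $I$ should be given the trivial $G$-action so that restriction, concatenation, and the product $W_{ij}\times I$ all stay inside the category of $G$-spaces, and that finite intersections of $G$-invariant open sets are $G$-invariant open — both routine. The one subtlety worth flagging is that \Cref{ct-mor-car} gives $f\simeq_G c_{y_0}$ only on the categorical pieces, not globally, so the argument must genuinely work cover-by-cover; but since we intersect the two covers this is automatic. Thus the ``hard part'' is essentially bookkeeping, and the proof is short.
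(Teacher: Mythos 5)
Your proof is correct and follows essentially the same route as the paper's: express $\ct_G(f)$ and $\ct_G(g)$ as $\D_G(-,c)$ for a single constant map at a fixed point (this is where $G$-connectedness and $Y^G\neq\emptyset$ enter), take the pairwise intersection of the two $G$-invariant covers, and chain the two $G$-homotopies through the constant. The paper states its characterization via orbit-valued maps before invoking \cite[Lemma 3.14]{EqTC} to reduce to a constant, whereas you cite Proposition~\ref{ct-mor-car} directly, but the argument is the same; you are also right that the cover count actually yields the sharper bound $\D_G(f,g)+1\le(\ct_G(f)+1)(\ct_G(g)+1)$, which the paper's proof gives as well while only stating the weaker inequality.
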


\begin{proof}
By Proposition~\ref{ct-mor-car}, there exist $G$-maps $c,c'\colon X\to Y$ with images contained in orbits $O(y_0)$ and $O(y_0')$ such that
\[
    \ct_G(f)=\D_G(f,c), 
    \qquad 
    \ct_G(g)=\D_G(g,c').
\]
Since $Y$ is $G$-connected and $Y^G\neq \emptyset$, \cite[Lemma~3.14]{EqTC} implies that both $c$ and $c'$ are $G$-homotopic to the same constant $G$-map $c_a\colon X\to Y$ (for some $a\in Y^G$).  
Hence
\[
    \ct_G(f)=\D_G(f,c_a), 
    \qquad 
    \ct_G(g)=\D_G(g,c_a).
\]

Suppose $\ct_G(f)=n$ and $\ct_G(g)=m$.  
Then there exist $G$-invariant open covers 
$\{U_0,\dots,U_n\}$ and $\{V_0,\dots,V_m\}$
of $X$ such that 
$f|_{U_i}\simeq_G c_a|_{U_i}$ and $g|_{V_j}\simeq_G c_a|_{V_j}$.  
For $0\leq i\leq n$ and $0\leq j\leq m$, set $W_{ij}:=U_i\cap V_j$.  
Each $W_{ij}$ is $G$-invariant, and the collection 
$\{W_{ij} \mid 0\leq i\leq n,\; 0\leq j\leq m\}$
forms a $G$-invariant open cover of $X$.  
On each $W_{ij}$ we have 
\[
f|_{W_{ij}}\;\simeq_G\; c_a|_{W_{ij}}\;\simeq_G\; g|_{W_{ij}}.
\]
Thus $\D_G(f,g)\leq (n+1)(m+1)$, as claimed.
\end{proof}

\bigskip
Our next result describes how the equivariant homotopic distance behaves under post-composition with $G$-maps.

\begin{proposition}\label{prop: compo ineq}
Let $h,h'\colon Z \to X$ and $f,g\colon X \to Y$ be $G$-maps.  
If $f\circ h'\simeq_G g\circ h'$, then 
\[
    \D_G(f\circ h,\,g\circ h)\;\leq\;\D_G(h,h').
\]
\end{proposition}

\begin{proof}
Suppose $\D_G(h,h')=n$.  
Then there exists a $G$-invariant open cover 
$\{U_0,\dots,U_n\}$
of $Z$ such that $h|_{U_i}\simeq_G h'|_{U_i}$ for all $i$.  
Since $f\circ h' \simeq_G g\circ h'$, we also have 
\[
    (f\circ h')|_{U_i} \;\simeq_G\; (g\circ h')|_{U_i}, \qquad 0\leq i\leq n.
\]
Therefore, for each $i$,
$(f\circ h)|_{U_i} 
   \;\simeq_G\; (f\circ h')|_{U_i} 
   \;\simeq_G\; (g\circ h')|_{U_i} 
   \;\simeq_G\; (g\circ h)|_{U_i}.$
This shows that $\D_G(f\circ h,\,g\circ h)\leq n$, as required.
\end{proof}

The previous result shows that post-composition cannot increase the equivariant homotopic distance.  
We now turn to comparison inequalities that bound $\D_G(f,g)$ in terms of two classical invariants in the equivariant setting: the topological complexity of the target and the Lusternik–Schnirelmann category of the source.

\begin{proposition}\label{prop: eq hd leq catG tcG}
Let $f,g\colon X \to Y$ be $G$-maps. Then:
\begin{enumerate}
    \item $\D_G(f,g)\;\leq\;\TC_G(Y)$.
    \item If $X$ is $G$-connected and $X^G\neq \emptyset$, then $\D_G(f,g)\;\leq\;\ct_G(X)$.
\end{enumerate}
\end{proposition}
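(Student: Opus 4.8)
The plan is to prove each part by exhibiting a suitable $G$-invariant open cover on which $f$ and $g$ are $G$-homotopic, obtained from a cover witnessing $\TC_G(Y)$ in part (1) and from a $G$-categorical cover of $X$ in part (2).

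For part (1), I would start from the characterization $\TC_G(Y) = \D_G(pr_1^Y, pr_2^Y)$ given in \Cref{cor: DG equals TCG}, so that $\TC_G(Y) = n$ yields a $G$-invariant open cover $\{W_0,\dots,W_n\}$ of $Y\times Y$ together with $G$-homotopies between $pr_1^Y|_{W_i}$ and $pr_2^Y|_{W_i}$. The natural move is to pull this cover back along the $G$-map $(f,g)\colon X\to Y\times Y$: set $U_i := (f,g)^{-1}(W_i)$, which is $G$-invariant and open, and $\{U_0,\dots,U_n\}$ covers $X$. On each $U_i$, composing the $G$-homotopy $pr_1^Y|_{W_i}\simeq_G pr_2^Y|_{W_i}$ with the $G$-map $(f,g)|_{U_i}\colon U_i\to W_i$ gives a $G$-homotopy between $pr_1^Y\circ (f,g)|_{U_i} = f|_{U_i}$ and $pr_2^Y\circ (f,g)|_{U_i} = g|_{U_i}$. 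Hence $\D_G(f,g)\le n = \TC_G(Y)$. (Alternatively, one can phrase this via \Cref{prop:composition-inequalities}: $\D_G(f,g) = \D_G(pr_1^Y\circ(f,g), pr_2^Y\circ(f,g)) \le \D_G(pr_1^Y,pr_2^Y) = \TC_G(Y)$, which is cleaner.)

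For part (2), fix $x_0\in X^G$ and note $f(x_0), g(x_0)$ need not be $G$-fixed in $Y$, so I cannot directly invoke \Cref{ct-mor-car} for $Y$. Instead I would use the source side: since $X$ is $G$-connected with $X^G\ne\emptyset$, \Cref{ct-mor-car} (or the preceding discussion) gives $\ct_G(X) = \ct_G(id_X) = \D_G(id_X, c_{x_0})$. Now apply \Cref{prop:composition-inequalities}(1) with the $G$-map $f\colon X\to Y$ to $\D_G(id_X, c_{x_0})$: we get $\D_G(f\circ id_X, f\circ c_{x_0}) = \D_G(f, c_{f(x_0)}) \le \D_G(id_X,c_{x_0}) = \ct_G(X)$, and similarly $\D_G(g, c_{f(x_0)}) \le \ct_G(X)$ — but this only bounds the distance of $f$ and $g$ to a common constant, not $\D_G(f,g)$ directly. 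The correct route is instead to take a $G$-categorical cover $\{U_0,\dots,U_n\}$ of $X$ with $n = \ct_G(X)$: on each $U_i$, the inclusion $inc_{U_i}$ is $G$-homotopic to a $G$-map into a single orbit, so $f|_{U_i} = f\circ inc_{U_i}$ is $G$-homotopic to a $G$-map $U_i\to Y$ with image in an orbit, and likewise $g|_{U_i}$; using $G$-connectedness of $X$ and \cite[Lemma~3.14]{EqTC} as in the proof of \Cref{ct-mor-car}, all these may be arranged to be $G$-homotopic to the \emph{same} constant $G$-map $c_{f(x_0)}|_{U_i}$, whence $f|_{U_i}\simeq_G g|_{U_i}$. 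Therefore $\D_G(f,g)\le n = \ct_G(X)$.

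The only genuine subtlety is the coherence issue in part (2): one must ensure that the $G$-homotopies produced on each $U_i$ land $f|_{U_i}$ and $g|_{U_i}$ at a common target, and this is exactly where $G$-connectedness of $X$ together with $X^G\ne\emptyset$ is used, via \cite[Lemma~3.14]{EqTC}, to replace arbitrary orbit-valued maps by a fixed constant map $c_{f(x_0)}$. Part (1) is essentially formal once \Cref{cor: DG equals TCG} and the composition inequality are in hand.
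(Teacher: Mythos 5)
Your part~(1), namely $\D_G(f,g)=\D_G\bigl(pr_1^Y\circ(f,g),\,pr_2^Y\circ(f,g)\bigr)\le \D_G(pr_1^Y,pr_2^Y)=\TC_G(Y)$ via \Cref{cor: DG equals TCG} and \Cref{prop:composition-inequalities}(2), is correct and in fact cleaner than the paper's, which instead passes through the identification $\D_G(f,g)=\sct_G(\bar{\pi}_Y)$ from \Cref{thm: DG equals secat} and the fact that $\bar{\pi}_Y$ is a pullback of $\pi_Y$. For part~(2), the lemma you discarded is not the one the paper actually uses: the paper invokes \Cref{prop: compo ineq} (not the basic \Cref{prop:composition-inequalities}) with $h=id_X$ and $h'$ an orbit-valued $G$-self-map of $X$, which immediately yields $\D_G(f,g)\le \D_G(id_X,h')=\ct_G(X)$ once one knows $f\circ h'\simeq_G g\circ h'$. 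Your explicit $G$-categorical cover argument is essentially an unrolled form of that same reduction, so the two routes are close in content even though you rebuilt the inequality by hand rather than citing the prepared lemma.

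Two corrections. First, a small slip: since $x_0\in X^G$ and $f,g$ are $G$-maps, $f(x_0),g(x_0)\in Y^G$ automatically, contrary to your opening remark for part~(2). Second, and more substantively, the step you pass over with ``all these may be arranged to be $G$-homotopic to the same constant $G$-map $c_{f(x_0)}|_{U_i}$'' is precisely the crux. What the cover gives you is $f|_{U_i}\simeq_G c_{f(x_0)}|_{U_i}$ and $g|_{U_i}\simeq_G c_{g(x_0)}|_{U_i}$; concluding $f|_{U_i}\simeq_G g|_{U_i}$ still needs $c_{f(x_0)}\simeq_G c_{g(x_0)}$, i.e.\ a path in $Y^G$ from $f(x_0)$ to $g(x_0)$, and $Y$ carries no $G$-connectedness hypothesis in the statement. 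The paper's own phrase ``again using Lemma~3.14'' glides over exactly the same point ($f\circ h'\simeq_G g\circ h'$ there). So you were right to call this the one genuine subtlety, but ``may be arranged'' is not a justification; either one verifies that $f(x_0)$ and $g(x_0)$ lie in the same path component of $Y^G$, or one adds a hypothesis on $Y$.
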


\begin{proof}
(1) By \Cref{thm: DG equals secat}, one has 
$\D_G(f,g)=\sct_G(\bar{\pi}_Y),$
where $\bar{\pi}_Y$ is the pullback of the path fibration $\pi_Y\colon Y^I \to Y\times Y$.  
Hence the inequality $\D_G(f,g)\leq \TC_G(Y)$ follows from \cite[Proposition~4.3]{EqTC}.

(2) Let $h'\colon X\to X$ be a $G$-map with image contained in a single orbit.  
Since $X$ is $G$-connected and $X^G\neq \emptyset$, \cite[Lemma~3.14]{EqTC} ensures that $h'\simeq_G c_a$ for some $a\in X^G$.  
Applying the same lemma again, we deduce
$f\circ h' \;\simeq_G\; c_{f(a)} \;\simeq_G\; c_{g(a)} \;\simeq_G\; g\circ h'.$
Therefore, by Proposition~\ref{prop: compo ineq} with $h=id_X$ and $h'$ as above, we obtain $\D_G(f,g)\leq \ct_G(X)$.
\end{proof}

\begin{remark}
Proposition~\ref{prop: eq hd leq catG tcG} places the equivariant homotopic distance within the same range of classical bounds that relate topological complexity and LS category in the nonequivariant setting.  
In particular, $\D_G(f,g)$ admits both an upper bound coming from the equivariant topological complexity of the target and, under mild hypotheses, another bound given by the equivariant category of the source.  
This highlights its role as a natural intermediary invariant connecting $\ct_G$ and $\TC_G$.
\end{remark}

We now turn how the equivariant homotopic distance behaves under restriction to fixed point sets and subgroup actions.

\begin{proposition}\label{prop:subgroup inequality}
Let $f,g\colon X\to Y$ be $G$-maps, and let $H,K\leq G$ be closed subgroups such that $f^H$ and $g^H$ are $K$-maps. Then
\begin{equation}\label{eq: subgp ineq for hd}
    \D_K(f^H,g^H)\;\leq\;\D_G(f,g).
\end{equation}
In particular,
$\max\bigl\{\D(f^H,g^H),\, \D_K(f,g)\bigr\}\;\leq\;\D_G(f,g).$
\end{proposition}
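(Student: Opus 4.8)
The plan is to exploit the fact that taking $H$-fixed points is a functor $(-)^H\colon \mathbf{GTop}\to\mathbf{Top}$ which, by the discussion preceding \Cref{thm: cohomological lower bounds}, preserves products and, more importantly, carries $G$-invariant open covers to open covers and $G$-homotopies to homotopies. First I would set $n=\D_G(f,g)$ (assuming it is finite, the case $\infty$ being trivial) and fix a $G$-invariant open cover $\{U_0,\dots,U_n\}$ of $X$ together with $G$-homotopies $H_i\colon U_i\times I\to Y$ from $f|_{U_i}$ to $g|_{U_i}$. I then restrict everything to $H$-fixed points: set $V_i:=(U_i)^H=U_i\cap X^H$. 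Since each $U_i$ is open and $G$-invariant, $V_i$ is open in $X^H$ and, because $f^H$ and $g^H$ are assumed to be $K$-maps, I must check that $V_i$ is $K$-invariant — this follows because $K$ acts on $X^H$ (the $K$-action being the restriction of whatever action makes $f^H,g^H$ into $K$-maps, which is compatible with the subspace $X^H\subseteq X$) and $U_i$ is $G$-invariant hence in particular invariant under any subgroup acting through $G$; I would spell out precisely in what sense $K$ acts so that this is legitimate.

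The second step is to produce the local $K$-homotopies. Applying the functor $(-)^H$ to $H_i$, and using that $(U_i\times I)^H=(U_i)^H\times I=V_i\times I$ (the interval carries trivial action), I obtain a map $H_i^H\colon V_i\times I\to Y^H$ which is a homotopy from $(f|_{U_i})^H=f^H|_{V_i}$ to $(g|_{U_i})^H=g^H|_{V_i}$. The content is that $H_i^H$ is in fact a $K$-homotopy: this is again because $H_i$ is $G$-equivariant, so its restriction to $H$-fixed points intertwines whatever residual $K$-action is in play; here I would invoke the same compatibility used for the $V_i$. Thus $\{V_0,\dots,V_n\}$ is a $K$-invariant open cover of $X^H$ on each piece of which $f^H$ and $g^H$ are $K$-homotopic, which gives $\D_K(f^H,g^H)\le n=\D_G(f,g)$.

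For the ``in particular'' clause, the bound $\D(f^H,g^H)\le\D_G(f,g)$ is the special case $K=\{e\}$ (the trivial group always acts, and $f^H,g^H$ are trivially $\{e\}$-maps), and $\D_K(f,g)\le\D_G(f,g)$ is the special case $H=\{e\}$, where $X^H=X$, $f^H=f$, $g^H=g$, so the hypothesis reads ``$f,g$ are $K$-maps'' — consistent with $K\le G$. Taking the maximum of these two instances yields the displayed inequality.

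The main obstacle, and the only genuinely non-formal point, is pinning down the precise hypothesis ``$f^H$ and $g^H$ are $K$-maps'' so that it is strong enough to make the whole restriction argument go through. One wants $K$ to act on $X^H$ and $Y^H$ in such a way that (a) the $G$-invariant open sets $U_i$ restrict to $K$-invariant sets and (b) $G$-equivariant homotopies restrict to $K$-equivariant ones; the natural and intended reading is that the $K$-action on $X^H$ and $Y^H$ is induced from an action of $K$ factoring through $G$ (for instance $K=N_G(H)/H$ acting on fixed sets in the standard way, or $K\le G$ a subgroup normalizing $H$). Under that reading both (a) and (b) are immediate from $G$-invariance and $G$-equivariance, since every element of $K$ acts via an element of $G$. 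I would state this compatibility explicitly as a standing assumption in the statement or at the start of the proof, after which the argument is the routine functorial restriction sketched above.
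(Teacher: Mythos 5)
Your proof is correct and takes essentially the same route as the paper: both arguments restrict a $G$-invariant open cover $\{U_i\}$ with $G$-homotopies $H_i$ to $H$-fixed points, taking $V_i=U_i\cap X^H$ and $H_i^H=H_i|_{V_i\times I}$ as the $K$-invariant cover and $K$-homotopies witnessing $\D_K(f^H,g^H)\leq\D_G(f,g)$, and then obtain the ``in particular'' bounds by specializing $K=\{e\}$ and $H=\{e\}$. Your extra care in pinning down what ``$f^H,g^H$ are $K$-maps'' must entail (that $K\leq G$ acts on $X^H$, $Y^H$ through the ambient $G$-action, e.g.\ $K\subseteq N_G(H)$) is a reasonable clarification of a point the paper leaves implicit; otherwise the two proofs coincide.
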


\begin{proof}
Let $\{U_0,\dots,U_n\}$ be a $G$-invariant open cover of $X$ with $G$-homotopies 
\[
H_i\colon U_i\times I \longrightarrow Y
\]
such that $H_i(u,0)=f(u)$ and $H_i(u,1)=g(u)$.  
Each $U_i$ is also $K$-invariant.  
Define $V_i:=U_i\cap X^H$ for $0\leq i\leq n$.  
Restricting, we obtain $H'_i:=H_i|_{V_i\times I}\colon V_i\times I\to Y^H$, which is $K$-equivariant because each $H_i$ is $G$-equivariant.  
Clearly, $H'_i(u,0)=f^H(u)$ and $H'_i(u,1)=g^H(u)$ for $u\in V_i$.  
This proves \eqref{eq: subgp ineq for hd}.

The special cases follow by setting $K=\{e\}$ (giving $\D(f^H,g^H)\leq \D_G(f,g)$) or $H=\{e\}$ (giving $\D_K(f,g)\leq \D_G(f,g)$).
\end{proof}

From this we recover several results of Colman–Grant \cite{EqTC} in the context of equivariant topological complexity.

\begin{corollary}
Let $X$ be a $G$-space, and let $H,K\leq G$ be closed subgroups such that $X^H$ is $K$-invariant. Then
\begin{equation}\label{eq:TC subgroup inequality}
    \TC_K(X^H)\;\leq\;\TC_G(X).
\end{equation}
In particular,
$\max\bigl\{\TC(X^H),\,\TC_K(X)\bigr\}\;\leq\;\TC_G(X).
$
\end{corollary}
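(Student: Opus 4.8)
The plan is to derive this corollary directly from \Cref{prop:subgroup inequality} by specializing the general inequality to the case of the projection maps. Recall from \Cref{cor: DG equals TCG} that $\TC_G(X) = \D_G(pr_1^X, pr_2^X)$, where $pr_1^X, pr_2^X : X\times X \to X$ are the two $G$-projections, with $G$ acting diagonally on $X\times X$. The key observation is that passing to $H$-fixed points is compatible with products: since $(-)^H$ preserves small limits (as noted in the preliminaries, $(-)^H \cong \mathrm{Hom}_{\mathbf{GTop}}(G/H,-)$), we have a natural homeomorphism $(X\times X)^H \cong X^H \times X^H$. Under this identification, the restriction $(pr_i^X)^H$ of the $i$-th projection to $H$-fixed points is precisely the $i$-th projection $pr_i^{X^H} : X^H \times X^H \to X^H$.

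First I would record this identification explicitly: the diagonal $G$-action on $X\times X$ restricts, on $H$-fixed points, to the diagonal $K$-action on $X^H\times X^H$ (using that $X^H$ is $K$-invariant by hypothesis, so $X^H\times X^H$ carries a diagonal $K$-action), and under the homeomorphism $(X\times X)^H \cong X^H\times X^H$ the maps $(pr_1^X)^H$ and $(pr_2^X)^H$ become the $K$-projections $pr_1^{X^H}$ and $pr_2^{X^H}$. In particular $(pr_1^X)^H$ and $(pr_2^X)^H$ are $K$-maps, so the hypotheses of \Cref{prop:subgroup inequality} are satisfied with $f = pr_1^X$ and $g = pr_2^X$. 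Applying that proposition gives
$$\D_K\big((pr_1^X)^H, (pr_2^X)^H\big) \leq \D_G(pr_1^X, pr_2^X).$$
Now rewriting the left-hand side via the identification above yields $\D_K(pr_1^{X^H}, pr_2^{X^H}) = \TC_K(X^H)$ (again by \Cref{cor: DG equals TCG}, applied to the $K$-space $X^H$), while the right-hand side is $\TC_G(X)$, which is exactly \eqref{eq:TC subgroup inequality}. The ``in particular'' statement then follows by taking $K = \{e\}$ (giving $\TC(X^H)\leq \TC_G(X)$, noting $X^H$ is automatically $\{e\}$-invariant) and $H = \{e\}$ (giving $\TC_K(X)\leq \TC_G(X)$), just as in the last paragraph of the proof of \Cref{prop:subgroup inequality}.

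The only genuinely non-routine point is verifying that the fixed-point functor interacts with the diagonal action and the projection maps in the claimed way; this is where I would be careful, but it is a formal consequence of the limit-preservation property of $(-)^H$ already established in \Cref{sec:eq-secat}. Everything else is bookkeeping: matching up the two instances of \Cref{cor: DG equals TCG} and invoking \Cref{prop:subgroup inequality}. Thus the main obstacle is not an obstacle at all, merely the careful statement of the naturality isomorphism $(X\times X)^H\cong X^H\times X^H$ as one of $K$-spaces intertwining the projections.
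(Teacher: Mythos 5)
Your proof is correct and follows essentially the same route as the paper: identify $(X\times X)^H \cong X^H\times X^H$ with the restricted projections becoming $pr_i^{X^H}$, note these are $K$-maps since $X^H$ is $K$-invariant, and then apply \Cref{prop:subgroup inequality} together with \Cref{cor: DG equals TCG} on both sides. The paper's version is terser but identical in substance; your extra care in spelling out the naturality of $(-)^H$ on products is a reasonable elaboration rather than a deviation.
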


\begin{proof}
For any closed subgroup $H\leq G$, one has $(X\times X)^H = X^H\times X^H$.  
Let $$pr_1^{X^H}, pr_2^{X^H}\colon X^H\times X^H\to X^H$$ denote the coordinate projections.  
Since $X^H$ is $K$-invariant, these maps are $K$-equivariant.  
By Proposition~\ref{prop:subgroup inequality},
\[
\TC_K(X^H) 
    = \D_K(pr_1^{X^H},pr_2^{X^H})
    \leq \D_G(pr_1^X,pr_2^X)
    = \TC_G(X).
\]

Setting $K=\{e\}$ in \eqref{eq:TC subgroup inequality} yields $\TC(X^H)\leq \TC_G(X)$, while setting $H=\{e\}$ gives $\TC_K(X)\leq \TC_G(X)$.
\end{proof}

\begin{corollary}\label{consecu}
Let $X$ be a $G$-connected space. Then:
\begin{enumerate}
    \item If $X^G\neq \emptyset$, then $\TC_G(X)\;\leq\;\ct_G(X\times X).$
    \item If $H=G_x$ for some $x\in X$, then $\ct_H(X)\;\leq\;\TC_G(X).$
\end{enumerate}
\end{corollary}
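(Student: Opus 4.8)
The plan is to prove both inequalities by combining the composition and subgroup inequalities already established, together with the characterizations $\TC_G(X)=\D_G(pr_1^X,pr_2^X)$ and $\ct_G(X)=\D_G(id_X,c_{x_0})$ (valid here since $X$ is $G$-connected). Throughout I use that $X$ is $G$-connected, so in particular every fixed-point set that appears is path-connected and nonempty whenever it needs to be.

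For part (1): assuming $X^G\neq\emptyset$, pick $x_0\in X^G$. Then $(x_0,x_0)\in(X\times X)^G$, and $X\times X$ is $G$-connected (since $(X\times X)^H=X^H\times X^H$ is path-connected for every closed $H\leq G$). Hence $\ct_G(X\times X)=\D_G(id_{X\times X},c_{(x_0,x_0)})$. Now I would apply \Cref{prop:composition-inequalities}(2) with $k=\Delta\colon X\to X\times X$ the diagonal $G$-map: this gives $\D_G(pr_1^X\circ\Delta,\,pr_2^X\circ\Delta)\leq\D_G(pr_1^X,pr_2^X)$ — but that is the wrong direction and the wrong maps, so instead the cleaner route is via \Cref{prop: compo ineq}. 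Set $f=pr_1^X$, $g=pr_2^X$, $h=id_{X\times X}$, and $h'=c_{(x_0,x_0)}$. Then $f\circ h'=c_{x_0}\simeq_G c_{x_0}=g\circ h'$ trivially (both are the constant map at $x_0$), so the hypothesis of \Cref{prop: compo ineq} holds, and we conclude
$$\TC_G(X)=\D_G(pr_1^X,pr_2^X)=\D_G(f\circ h,g\circ h)\leq\D_G(h,h')=\D_G(id_{X\times X},c_{(x_0,x_0)})=\ct_G(X\times X).$$

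For part (2): let $x\in X$ and $H=G_x$ be its isotropy group, which is a closed subgroup of $G$. The point $x$ is an $H$-fixed point of $X$, so $x\in X^H$ and $X^H\neq\emptyset$; moreover $X^H$ is path-connected by $G$-connectedness of $X$. The key observation is that restricting the two projections $pr_1^X,pr_2^X\colon X\times X\to X$ to the $H$-fixed set yields, via the identification $(X\times X)^H=X^H\times X^H$, exactly the projections $pr_1^{X^H},pr_2^{X^H}\colon X^H\times X^H\to X^H$, and these are $H$-equivariant (indeed $X^H$ carries a residual action of $N_G(H)/H\supseteq$ — but we only need the trivial-to-$H$ statement, or rather we apply \Cref{prop:subgroup inequality} with $K=H$, noting $X^H$ is $H$-invariant since $H$ acts trivially on it, so the projections are automatically $H$-maps). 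Applying \Cref{prop:subgroup inequality} with this $H$ and $K=H$ gives
$$\TC_H(X^H)=\D_H(pr_1^{X^H},pr_2^{X^H})\leq\D_G(pr_1^X,pr_2^X)=\TC_G(X).$$
It then remains to bound $\ct_H(X)$ by $\TC_H(X^H)$. Here I would invoke part (1), already proved, applied to the group $H$ acting on $X^H$ — wait, that gives $\TC_H(X^H)\leq\ct_H(X^H\times X^H)$, the wrong direction. Instead use the general inequality $\ct_G(Y)\leq\TC_G(Y)$ for $G$-connected $Y$ with $Y^G\neq\emptyset$: taking $G\rightsquigarrow H$ and $Y\rightsquigarrow X^H$ (which is $H$-connected since $(X^H)^{H'}=X^{H'}$ is path-connected for $H'\leq H$, and $(X^H)^H=X^H\ni x$ is nonempty), we get $\ct_H(X^H)\leq\TC_H(X^H)$. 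Finally, $\ct_H(X)=\ct_H(X^H)$? No — this is false in general. The correct chain is: use \Cref{prop:subgroup inequality} in the form giving $\ct_H(X^H)\leq$ something, OR directly note that $X$ is $H$-connected with $x\in X^H$, hence $\ct_H(X)=\D_H(id_X,c_x)$, and then argue $\D_H(id_X,c_x)\leq\TC_H(X)\leq\TC_G(X)$ using \Cref{prop: eq hd leq catG tcG} — but that reads $\D_H(f,g)\leq\ct_H(X)$, again unhelpful. I would therefore route part (2) as: $\ct_H(X)\leq\TC_H(X)$ (the equivariant Lupton–Scherer-type inequality $\ct_G\leq\TC_G$ for $G$-connected $X$ with fixed point, which follows from \Cref{prop: eq hd leq catG tcG}(1) with $f=id_X$, $g=c_x$ giving $\ct_H(X)=\D_H(id_X,c_x)\leq\TC_H(X)$), and then $\TC_H(X)\leq\TC_G(X)$ from \Cref{prop:subgroup inequality} with $K=H$, $H=\{e\}$ (i.e. the subgroup-restriction of $\TC$), yielding $\ct_H(X)\leq\TC_H(X)\leq\TC_G(X)$.

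\textbf{Main obstacle.} The delicate point is keeping straight which $G$-connectedness and nonempty-fixed-point hypotheses are needed at each invocation, and ensuring the auxiliary inequality $\ct_G(X)\leq\TC_G(X)$ (for $G$-connected $X$ with $X^G\neq\emptyset$) is genuinely available — it follows from \Cref{prop: eq hd leq catG tcG}(1) by taking $f=id_X$ and $g$ the constant map at a fixed point, since then $\D_G(id_X,c_{x_0})=\ct_G(X)$ and $\D_G(id_X,c_{x_0})\leq\TC_G(X)$. With that in hand, part (1) is a one-line application of \Cref{prop: compo ineq} and part (2) is the two-step chain $\ct_H(X)\leq\TC_H(X)\leq\TC_G(X)$ via \Cref{prop:subgroup inequality}. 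No step requires new ideas, only careful bookkeeping of the standing hypotheses.
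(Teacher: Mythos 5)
Your proposal is correct. A few remarks comparing it to the paper's proof.

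For part (1), your final argument (applying \Cref{prop: compo ineq} with $f=pr_1^X$, $g=pr_2^X$, $h=id_{X\times X}$, $h'=c_{(x_0,x_0)}$, then identifying $\D_G(id_{X\times X},c_{(x_0,x_0)})=\ct_G(X\times X)$) is in substance the same as the paper's: the paper simply cites \Cref{prop: eq hd leq catG tcG}(2) directly with domain $X\times X$, and that proposition was itself proved exactly by the \Cref{prop: compo ineq} maneuver you carry out. You have unfolded one layer of citation; otherwise the argument is identical, and the required hypotheses ($X\times X$ is $G$-connected, $(X\times X)^G\neq\emptyset$) are the same.

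For part (2), your route is genuinely different. The paper proceeds via the $H$-equivariant inclusion $i_2=(c_{x_0},id_X):X\to X\times X$, writes $\ct_H(X)=\D_H(pr_2^X\circ i_2, pr_1^X\circ i_2)$, and then applies the composition inequality \Cref{prop:composition-inequalities}(2) followed by the subgroup inequality \Cref{prop:subgroup inequality}. You instead establish the chain $\ct_H(X)\leq\TC_H(X)\leq\TC_G(X)$: the first inequality comes from \Cref{ct-mor-car} together with \Cref{prop: eq hd leq catG tcG}(1) (both applied with the group $H$), and the second from \Cref{prop:subgroup inequality}. Both are valid; your route has the virtue of explicitly isolating the equivariant inequality $\ct_H\leq\TC_H$ as a reusable intermediate step, while the paper's is more self-contained in that it avoids passing through $\TC_H(X)$ and instead works directly with the projection maps and the section $i_2$. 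The hypotheses you invoke (that $X$ is $H$-connected because it is $G$-connected, and that $x\in X^H$ since $H=G_x$) are checked correctly. In short: correct, with part (2) taking a slightly different but equally valid path.
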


\begin{proof}
(1) This follows from part~(2) of Proposition~\ref{prop: eq hd leq catG tcG} by taking 
$f=pr_1^X$ and $g=pr_2^X.$

(2) Let $x_0\in X^H$, so that $x_0$ is fixed by every element of $H$.  
Define the $H$-equivariant map 
$i_2\colon X \longrightarrow X\times X$ as $i_2=(c_{x_0},\,id_X).$
Note that $\ct_H(X)=\D_H(id_X,c_{x_0})$.  
By Propositions \ref{prop:composition-inequalities} and \ref{prop:subgroup inequality}, we obtain
\[
\ct_H(X)=\D_H(pr_2^X\circ i_2,\,pr_1^X\circ i_2)
        \;\leq\;\D_H(pr_1^X,pr_2^X)
        \;\leq\;\D_G(pr_1^X,pr_2^X)
        =\TC_G(X).
\]
\end{proof}

We conclude this section by establishing the equivariant homotopy invariance of the equivariant homotopic distance.

The proof of the following result is analogous to the nonequivariant case, so we prefer to omit it.
\begin{proposition}\label{prop: left right hootopy inverse}
Let $f,g\colon X \to Y$ be $G$-maps.  
\begin{enumerate}
    \item If there exists a $G$-map $h\colon Y \to Y'$ with a left $G$-homotopy inverse, then
    \[
        \D_G(h\circ f,\,h\circ g)\;=\;\D_G(f,g).
    \]
    \item If there exists a $G$-map $h\colon X' \to X$ with a right $G$-homotopy inverse, then
    \[
        \D_G(f\circ h,\,g\circ h)\;=\;\D_G(f,g).
    \]
\end{enumerate}
\end{proposition}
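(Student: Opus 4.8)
The plan is to prove both equalities by a two-sided inequality argument, using only \Cref{prop:composition-inequalities} (monotonicity of $\D_G$ under composition) together with the hypothesis that $h$ admits a one-sided $G$-homotopy inverse. For the first statement, one inequality is already given: by part (1) of \Cref{prop:composition-inequalities}, $\D_G(h\circ f, h\circ g)\leq \D_G(f,g)$. For the reverse, let $r:Y'\to Y$ be a left $G$-homotopy inverse of $h$, so that $r\circ h\simeq_G id_Y$. Applying part (1) of \Cref{prop:composition-inequalities} to the $G$-map $r$ and the pair $h\circ f, h\circ g$ gives
$$
\D_G(r\circ h\circ f, r\circ h\circ g)\leq \D_G(h\circ f, h\circ g).
$$
Since $r\circ h\simeq_G id_Y$, we have $r\circ h\circ f\simeq_G f$ and $r\circ h\circ g\simeq_G g$, and because $\D_G$ depends only on the $G$-homotopy classes of its arguments (property (4) in the Remark following the definition of $\D_G$), the left-hand side equals $\D_G(f,g)$. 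Combining the two inequalities yields the first equality.

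For the second statement the argument is dual, using part (2) of \Cref{prop:composition-inequalities} instead. One inequality, $\D_G(f\circ h, g\circ h)\leq \D_G(f,g)$, is immediate from part (2). For the reverse, let $s:X\to X'$ be a right $G$-homotopy inverse of $h$, so $h\circ s\simeq_G id_X$. Applying part (2) of \Cref{prop:composition-inequalities} to precomposition with $s$ and the pair $f\circ h, g\circ h$ gives $\D_G(f\circ h\circ s, g\circ h\circ s)\leq \D_G(f\circ h, g\circ h)$, and since $h\circ s\simeq_G id_X$ the left side is $\D_G(f,g)$ by $G$-homotopy invariance of the arguments. This closes the loop.

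There is essentially no serious obstacle here: the only thing to be careful about is keeping track of which one-sided inverse is needed on which side — a left $G$-homotopy inverse of $h:Y\to Y'$ composes correctly with post-composition (the functor $h_*$), while a right $G$-homotopy inverse of $h:X'\to X$ composes correctly with pre-composition (the functor $h^*$). One should also explicitly invoke the fact, recorded right after \Cref{def:eqsecat}'s analogue for $\D_G$, that $f\simeq_G f'$ and $g\simeq_G g'$ imply $\D_G(f,g)=\D_G(f',g')$, since this is what lets us replace $r\circ h\circ f$ by $f$ (respectively $f\circ h\circ s$ by $f$). Everything else is formal, so the proof will be short. A remark worth adding afterwards is the special case $h$ a $G$-homotopy equivalence, which then shows $\D_G$ is a genuine $G$-homotopy invariant in both variables simultaneously.
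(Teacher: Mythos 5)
Your proposal is correct and follows essentially the same route as the paper: a two-sided inequality argument using \Cref{prop:composition-inequalities} twice, together with the $G$-homotopy invariance of $\D_G$ in its arguments to identify $\D_G(r\circ h\circ f, r\circ h\circ g)$ with $\D_G(f,g)$. The paper just writes the argument as a single chain of inequalities and treats the second case as analogous, which is precisely what you do.
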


This shows that the equivariant homotopic distance is an equivariant homotopy invariant in the following sense:

\begin{corollary}\label{invar}
Let $f,g\colon X \to Y$ and $f',g'\colon X' \to Y'$ be $G$-maps.  
If there exist $G$-homotopy equivalences $\alpha\colon Y \to Y'$ and $\beta\colon X' \to X$ such that 
$\alpha\circ f\circ \beta \;\simeq_G\; f'$ and $\alpha\circ g\circ \beta \;\simeq_G\; g',$
then $\D_G(f,g)\;=\;\D_G(f',g').$
\end{corollary}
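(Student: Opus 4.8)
The plan is to deduce this corollary directly from \Cref{prop: left right hootopy inverse} together with the $G$-homotopy invariance property already recorded in the remark that $\D_G$ depends only on the $G$-homotopy classes of its arguments. The idea is to bridge between $(f,g)$ and $(f',g')$ in two steps: first precompose with the $G$-homotopy equivalence $\beta$, then postcompose with the $G$-homotopy equivalence $\alpha$, invoking \Cref{prop: left right hootopy inverse} at each stage.

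First I would observe that since $\beta: X' \to X$ is a $G$-homotopy equivalence, it admits a right $G$-homotopy inverse (indeed a two-sided one), so the second part of \Cref{prop: left right hootopy inverse} applies with $h = \beta$ to give
$$
\D_G(f \circ \beta, g \circ \beta) = \D_G(f, g).
$$
Next, since $\alpha: Y \to Y'$ is a $G$-homotopy equivalence it has a left $G$-homotopy inverse, so the first part of \Cref{prop: left right hootopy inverse} applies with $h = \alpha$ (and the maps $f \circ \beta, g \circ \beta : X' \to Y$) to yield
$$
\D_G(\alpha \circ f \circ \beta, \alpha \circ g \circ \beta) = \D_G(f \circ \beta, g \circ \beta) = \D_G(f, g).
$$
Finally, the hypotheses $\alpha \circ f \circ \beta \simeq_G f'$ and $\alpha \circ g \circ \beta \simeq_G g'$ together with the fact that $\D_G$ is invariant under $G$-homotopy of either argument (property (4) in the remark following the definition) give $\D_G(\alpha \circ f \circ \beta, \alpha \circ g \circ \beta) = \D_G(f', g')$, and combining the displayed equalities completes the proof.

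There is really no serious obstacle here: the corollary is a formal consequence of the preceding proposition, and the only point requiring a moment's care is making sure the composites are typed correctly (that $\alpha \circ f \circ \beta$ and $\alpha \circ g \circ \beta$ are genuinely $G$-maps $X' \to Y'$, which is automatic since $\alpha, \beta, f, g$ are all $G$-maps) and that a $G$-homotopy equivalence supplies both a left and a right $G$-homotopy inverse, so that both halves of \Cref{prop: left right hootopy inverse} are available. One could also phrase the argument as a single chain of (in)equalities mirroring the proof of \Cref{prop: left right hootopy inverse}, but splitting it into the two named steps is cleaner.
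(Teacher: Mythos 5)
Your argument is correct and is precisely the intended deduction from \Cref{prop: left right hootopy inverse}: precompose with $\beta$ (using the right-inverse half), postcompose with $\alpha$ (using the left-inverse half), and then appeal to invariance of $\D_G$ under $G$-homotopy of the two arguments. The paper offers no proof of the corollary because it is exactly this formal chain of equalities, so your reasoning matches the paper's approach.
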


\begin{corollary}
If $f\colon X \to Y$ is a $G$-homotopy equivalence, then
\[
    \TC_G(X)=\TC_G(Y)
    \qquad\text{and}\qquad
    \ct_G(X)=\ct_G(Y).
\]
\end{corollary}

\section{The triangle inequality and its consequences}\label{sec: triangle inequality}

Our goal now is to show that the equivariant homotopic distance satisfies the triangle inequality under some mild assumptions on the domain space $X$. This result will enable us to endow the set of equivariant homotopy classes $[X, Y]_G$ with a metric structure. Unlike the argument developed by E. Macías-Virgós and D. Mosquera-Lois in \cite{macias2022homotopic}---which relies heavily on a remarkable result by J. Oprea and J. Strom \cite[Lemma 4.3]{oprea2011mixing}---our approach follows a different path. Specifically, we avoid arguments based on open covers and instead employ categorical techniques.

\medskip
Consider $G$-maps $f,g,h:X\to Y$. We know that, from the following pullbacks
$$\xymatrix{
{\mathcal{P}(f,g)} \ar[r] \ar[d]_{\overline{\pi }_Y^{f,g}} & {Y^I} \ar[d]^{\pi _Y} &  {\mathcal{P}(f,h)} \ar[r] \ar[d]_{\overline{\pi }_Y^{f,h}} & {Y^I} \ar[d]^{\pi _Y} & {\mathcal{P}(h,g)} \ar[r] \ar[d]_{\overline{\pi }_Y^{h,g}} & {Y^I} \ar[d]^{\pi _Y}  \\
{X} \ar[r]_{(f,g)} & {Y\times Y} & {X} \ar[r]_{(f,h)} & {Y\times Y} & {X} \ar[r]_{(h,g)} & {Y\times Y}  }
$$
\noindent we have $\D _G(f,g)=\sct _G(\overline{\pi }_Y^{f,g}),$  $\D _G(f,h)=\sct _G(\overline{\pi }_Y^{f,h}),$ and $\D _G(h,g)=\sct _G(\overline{\pi }_Y^{h,g}).$

\medskip
We begin with the following lemma. For any product of $G$-spaces, we will consider the diagonal action of $G$.

\begin{lemma}\label{lem-triang-1}
Let $Y$ be a $G$-space and consider $$Y^I\times _Y Y^I:=\{(\alpha ,\beta )\in Y^I\times Y^I \mid \alpha (1)=\beta (0)\}$$ and take
$(Y\times Y)\times_Y (Y\times Y):=\{(y_1,y_2,y_3,y_4)\in Y\times Y\times Y\times Y \mid y_2=y_3\}$.
Then, the following diagram is a pullback of $G$-spaces and $G$-maps:
$$\xymatrix{
{Y^I\times_Y Y^I} \ar[d]_{\pi _Y\times _Y \pi _Y} \ar@{^(->}[rr] & & {Y^I\times Y^I} \ar[d]^{\pi _Y\times \pi _Y} \\
{(Y\times Y)\times_Y (Y\times Y)} \ar@{^(->}[rr] & & {Y\times Y\times Y\times Y.}
}$$
Here, $\pi _Y\times _Y \pi _Y:Y^I\times_Y Y^I\to (Y\times Y)\times _Y(Y\times Y)$ denotes the obvious restriction of $\pi _Y\times \pi _Y$. Therefore, $\pi _Y\times _Y \pi _Y$ is a $G$-fibration.    
\end{lemma}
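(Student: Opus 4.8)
The plan is to identify the displayed square as the pullback of $\pi_Y\times\pi_Y$ along the inclusion of $(Y\times Y)\times_Y(Y\times Y)$, and then to deduce the fibration statement from the stability of $G$-fibrations under products and pullbacks.

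First I would unwind the definitions. For $(\alpha,\beta)\in Y^I\times Y^I$ one has $(\pi_Y\times\pi_Y)(\alpha,\beta)=(\alpha(0),\alpha(1),\beta(0),\beta(1))$. This quadruple lies in the subspace $(Y\times Y)\times_Y(Y\times Y)=\{(y_1,y_2,y_3,y_4)\mid y_2=y_3\}$ precisely when $\alpha(1)=\beta(0)$, i.e. precisely when $(\alpha,\beta)\in Y^I\times_Y Y^I$. Hence, as a subspace of $Y^I\times Y^I$,
$$Y^I\times_Y Y^I=(\pi_Y\times\pi_Y)^{-1}\big((Y\times Y)\times_Y(Y\times Y)\big),$$
which is exactly the underlying space of the pullback of $\pi_Y\times\pi_Y$ along the inclusion, equipped with the restriction $\pi_Y\times_Y\pi_Y$ and the inclusion. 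Since $G$ acts diagonally on every space involved, all four maps are $G$-maps and the two subspaces are $G$-invariant; as limits in $\mathbf{GTop}$ are created by the forgetful functor to $\mathbf{Top}$, the fact that the underlying square is a pullback of spaces (which we have just checked) is enough to conclude that the square is a pullback in $\mathbf{GTop}$.

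For the last assertion I would recall that $\pi_Y:Y^I\to Y\times Y$ is a $G$-fibration (the $G$-equivariant free path fibration). By \Cref{sub}, the product $\pi_Y\times\pi_Y:Y^I\times Y^I\to Y\times Y\times Y\times Y$ is again a $G$-fibration. Finally, the $G$-homotopy lifting property is preserved under pullback exactly as in the non-equivariant case: given a $G$-space $Z$, a $G$-homotopy $Z\times I\to (Y\times Y)\times_Y(Y\times Y)$ and a $G$-lift of its initial stage to $Y^I\times_Y Y^I$, one composes with the inclusions, lifts $G$-equivariantly along $\pi_Y\times\pi_Y$, and the universal property of the pullback produces the required $G$-lift to $Y^I\times_Y Y^I$. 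Hence $\pi_Y\times_Y\pi_Y$ is a $G$-fibration.

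I do not expect a genuine obstacle here: the only point requiring care is the set-theoretic identification of the pullback, namely matching the fibre-product condition $\alpha(1)=\beta(0)$ with the condition that cuts out $(Y\times Y)\times_Y(Y\times Y)$ inside $Y\times Y\times Y\times Y$, together with the routine observation that limits of $G$-spaces are computed on underlying spaces.
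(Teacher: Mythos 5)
Your proof is correct and takes essentially the same approach as the paper: both identify the pullback of $\pi_Y\times\pi_Y$ along the subspace inclusion with the preimage $(\pi_Y\times\pi_Y)^{-1}\bigl((Y\times Y)\times_Y(Y\times Y)\bigr)$ and observe that this equals $Y^I\times_Y Y^I$. You simply spell out a few routine details (the computation of the preimage, the creation of limits in $\mathbf{GTop}$, and the pullback-stability of $G$-fibrations) that the paper leaves implicit.
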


\begin{proof}
Since $(Y\times Y)\times _Y(Y\times Y)$ is a $G$-subspace of $Y\times Y\times Y\times Y$, then this pullback is, up to $G$-homeomorphism, the preimage
$$(\pi _Y\times  \pi _Y)^{-1}((Y\times Y)\times _Y(Y\times Y))=Y^I\times _YY ^I.$$  
\end{proof}

Now we define the following pullback of $G$-maps:
$$\xymatrix{
{Q} \ar[d]_q \ar[rr] & & {Y^I\times _Y Y^I} \ar[d]^{\pi _Y\times _Y \pi _Y} \\
{X} \ar[rr]_(0.35){(f,h,h,g)} & & {(Y\times Y)\times _Y(Y\times Y)} .}$$

\begin{remark}\label{triang-remark}
As the composite of pullbacks is also a pullback, composing with the pullback given in Lemma \ref{lem-triang-1} above, we also have a pullback
$$\xymatrix{
{Q} \ar[d]_q \ar[rr] & & {Y^I\times Y^I} \ar[d]^{\pi _Y\times \pi _Y} \\
{X} \ar[rr]_(0.35){(f,h,h,g)} & & {Y\times Y\times Y\times Y} .}$$
\end{remark}

\begin{lemma}\label{lem-triang-2}
There is a pullback of $G$-maps
$$\xymatrix{
{Q} \ar[rr] \ar[d]_q & & {\mathcal{P}(f,h)\times \mathcal{P}(h,g)} \ar[d]^{\overline{\pi }_Y^{f,h}\times \overline{\pi }_Y^{h,g}} \\
{X} \ar[rr]_{\Delta _X} & & {X\times X} .}
$$ \noindent where $\Delta _X$ denotes the diagonal map. In particular, $\sct _G(q)\leq \sct _G(\overline{\pi }_Y^{f,h}\times \overline{\pi }_Y^{h,g})$.
\end{lemma}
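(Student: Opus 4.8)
The plan is to identify $Q$ with an iterated pullback and then recognize that iterated pullback as a single pullback of the product fibration $\overline{\pi}_Y^{f,h}\times\overline{\pi}_Y^{h,g}$ along the diagonal $\Delta_X$. First I would unravel the definition of $Q$ using \Cref{triang-remark}: a point of $Q$ is a point $x\in X$ together with a pair of paths $(\alpha,\beta)\in Y^I\times Y^I$ whose endpoints match the prescribed data, namely $\alpha(0)=f(x)$, $\alpha(1)=h(x)$, $\beta(0)=h(x)$, $\beta(1)=g(x)$ (the middle-coordinate compatibility $\alpha(1)=\beta(0)$ being automatic from $(f,h,h,g)$ landing in $(Y\times Y)\times_Y(Y\times Y)$). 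Reading off the first path, $(x,\alpha)$ is exactly a point of $\mathcal{P}(f,h)$; reading off the second, $(x,\beta)$ is a point of $\mathcal{P}(h,g)$; and these two share the same $x$. This is precisely the description of the pullback of $\overline{\pi}_Y^{f,h}\times\overline{\pi}_Y^{h,g}\colon \mathcal{P}(f,h)\times\mathcal{P}(h,g)\to X\times X$ along $\Delta_X\colon X\to X\times X$, since a point of that pullback is a point of $X$ together with a point of $\mathcal{P}(f,h)$ and a point of $\mathcal{P}(h,g)$ whose images in $X\times X$ agree after applying $\Delta_X$ — i.e. whose underlying points of $X$ both equal the chosen $x$.

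Concretely, I would make this rigorous by the standard ``pasting law'' for pullbacks rather than by a bare point-set argument. Consider the commutative square whose right vertical map is $\overline{\pi}_Y^{f,h}\times\overline{\pi}_Y^{h,g}$ and bottom map is $\Delta_X$, and form the composite square obtained by stacking on top of it the product of the two defining pullback squares for $\mathcal{P}(f,h)$ and $\mathcal{P}(h,g)$ (which gives $\mathcal{P}(f,h)\times\mathcal{P}(h,g)\to Y^I\times Y^I$ over $X\times X\to Y\times Y\times Y\times Y$, a pullback since products of pullbacks are pullbacks). The outer composite square then has top map into $Y^I\times Y^I$ and bottom map $X\xrightarrow{\Delta_X}X\times X\to Y\times Y\times Y\times Y$, and one checks this bottom composite is exactly $(f,h,h,g)$. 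By \Cref{triang-remark} the pullback of $\pi_Y\times\pi_Y$ along $(f,h,h,g)$ is $Q$. Since the lower square (the product of pullbacks) is a pullback and the outer composite square is a pullback, the pasting law for pullbacks forces the top square — the one with corners $Q$, $\mathcal{P}(f,h)\times\mathcal{P}(h,g)$, $X$, $X\times X$ — to be a pullback as well, which is the claimed diagram. All maps in sight are $G$-maps and all products carry the diagonal $G$-action, so the whole argument takes place in $\mathbf{GTop}$; pullbacks there are computed on underlying spaces, so nothing extra is needed for equivariance.

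Finally, for the inequality $\sct_G(q)\le \sct_G(\overline{\pi}_Y^{f,h}\times\overline{\pi}_Y^{h,g})$: since $q$ is a pullback of $\overline{\pi}_Y^{f,h}\times\overline{\pi}_Y^{h,g}$, this is the standard fact that equivariant sectional category does not increase under pullback. One can see it directly — given a $G$-invariant open $U\subseteq X$ and a local $G$-homotopy section of $\overline{\pi}_Y^{f,h}\times\overline{\pi}_Y^{h,g}$ over the image of $U$ in $X\times X$ (which lands in the diagonal), composing with the pullback projection produces a local $G$-homotopy section of $q$ over $U$ — or invoke \Cref{ineq} applied to the map of fibrations over $X$ given by the pullback square, noting that the composite $Q\to\mathcal{P}(f,h)\times\mathcal{P}(h,g)\xrightarrow{\overline{\pi}_Y^{f,h}\times\overline{\pi}_Y^{h,g}}X\times X$ equals $\Delta_X\circ q$ and that $\sct_G$ of the pullback of a map over $X\times X$ along $\Delta_X$ is bounded by $\sct_G$ of that map (the relevant invariant open cover of $X\times X$ pulls back along $\Delta_X$, and sections pull back too). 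I do not expect any genuine obstacle here; the only point requiring care is bookkeeping the coordinates of $(Y\times Y)\times_Y(Y\times Y)$ and $Y\times Y\times Y\times Y$ so that the bottom composite of the outer square is verified to be $(f,h,h,g)$ on the nose, and making sure the pasting law is applied with the squares oriented correctly.
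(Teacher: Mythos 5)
Your proof is correct and takes essentially the same route as the paper: identify $Q$ via \Cref{triang-remark}, stack the product-of-pullbacks square on top of the claimed square, check the bottom composite is $(f,h,h,g)$, and invoke the pasting law for pullbacks to conclude that the left square is itself a pullback. One small caveat on the inequality: the paper simply cites \cite[Proposition 4.3]{EqTC} (the pullback-nonincrease property of $\sct_G$), which is exactly what your direct argument re-proves, whereas your alternative appeal to \Cref{ineq} is not quite applicable — that lemma compares two maps over a \emph{common} base, while here $q$ and $\overline{\pi}_Y^{f,h}\times\overline{\pi}_Y^{h,g}$ live over different bases $X$ and $X\times X$, so you really do need the pullback property rather than \Cref{ineq}.
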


\begin{proof}
Taking into account that the right square in the following diagram is a pullback, by the universal property of pullbacks, there is an induced map making commutative the left square:
$$\xymatrix{
{Q} \ar[d]_q \ar@{.>}[rr] & & {\mathcal{P}(f,h)\times \mathcal{P}(h,g)} \ar[d]^{\overline{\pi }_Y^{f,h}\times \overline{\pi }_Y^{h,g}} \ar[rr] & & {Y^I\times Y^I} \ar[d]^{\pi _Y\times \pi _Y} \\
{X} \ar[rr]_{\Delta _X} & & {X\times X} \ar[rr]_{(f,h)\times (h,g)} & & {Y\times Y\times Y\times Y.}
}$$
However, since the composite of these two squares is a pullback (observe that $((f,h)\times (h,g))\circ \Delta _X=(f,h,h,g)$ and consider Remark \ref{triang-remark}) we have that the left square is necessarily a pullback. The inequality $\sct _G(q)\leq \sct _G(\overline{\pi }_Y^{f,h}\times \overline{\pi }_Y^{h,g})$ comes from \cite[Proposition 4.3]{EqTC}.
\end{proof}

The final step is provided by the following lemma:

\begin{lemma}\label{lem-triang-3}
There exists a commutative triangle relating $q$ with $\overline{\pi }_Y^{f,g}$:
$$\xymatrix{
{Q} \ar[rr] \ar[dr]_q & & {P(f,g)} \ar[dl]^{\overline{\pi }_Y^{f,g}} \\
 & {X.} & }$$ In particular, $\sct _G(\overline{\pi }_Y^{f,g})\leq \sct _G(q)$.
\end{lemma}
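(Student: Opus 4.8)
The plan is to construct the top horizontal $G$-map $Q \to \mathcal{P}(f,g)$ explicitly by concatenating paths, and then to verify that the triangle commutes on the nose. Recall that, unwinding the pullback square defining $Q$ in Remark \ref{triang-remark}, a point of $Q$ is a tuple $(x,\alpha,\beta)$ with $x \in X$, $\alpha,\beta \in Y^I$, subject to $\alpha(0) = f(x)$, $\alpha(1) = h(x)$, $\beta(0) = h(x)$, $\beta(1) = g(x)$ — that is, $(x,\alpha) \in \mathcal{P}(f,h)$ and $(x,\beta) \in \mathcal{P}(h,g)$ with matching endpoint at $h(x)$ (this is exactly the image of $Q$ under the map of Lemma \ref{lem-triang-2} landing in the fibre product, together with the identification of the $Y^I \times_Y Y^I$ factor). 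First I would define $\Phi: Q \to \mathcal{P}(f,g)$ by $\Phi(x,\alpha,\beta) = (x, \alpha * \beta)$, where $\alpha * \beta$ denotes the usual concatenation path: $(\alpha*\beta)(t) = \alpha(2t)$ for $t \in [0,\tfrac12]$ and $(\alpha*\beta)(t) = \beta(2t-1)$ for $t \in [\tfrac12,1]$. The matching condition $\alpha(1) = h(x) = \beta(0)$ guarantees this is a well-defined continuous path, and $(\alpha*\beta)(0) = f(x)$, $(\alpha*\beta)(1) = g(x)$, so $(x,\alpha*\beta) \in \mathcal{P}(f,g)$.

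Next I would check that $\Phi$ is $G$-equivariant. The $G$-action on $Q$ is the restriction of the diagonal action on $X \times Y^I \times Y^I$ (with $G$ acting on $Y^I$ pointwise), and likewise on $\mathcal{P}(f,g)$; since concatenation commutes with the pointwise $G$-action on paths — $(g\cdot\alpha)*(g\cdot\beta) = g\cdot(\alpha*\beta)$ — and $\Phi$ leaves the $X$-coordinate untouched, equivariance is immediate. Continuity of $\Phi$ follows from continuity of path concatenation in the compactly generated setting together with the universal property of the pullback defining $\mathcal{P}(f,g)$: the composite $Q \to Y^I$, $(x,\alpha,\beta)\mapsto \alpha*\beta$, and the projection $q: Q \to X$ together factor through $\mathcal{P}(f,g)$ because $\pi_Y(\alpha*\beta) = (f(x),g(x)) = (f,g)(q(x,\alpha,\beta))$.

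Finally, the triangle $\overline{\pi}_Y^{f,g} \circ \Phi = q$ holds by inspection, since both sides send $(x,\alpha,\beta)$ to $x$. The claimed inequality $\sct_G(\overline{\pi}_Y^{f,g}) \leq \sct_G(q)$ is then a direct application of Lemma \ref{ineq} to this commutative triangle. I do not expect a serious obstacle here; the only point requiring a little care is the bookkeeping that identifies the total space $Q$ concretely (matching the two descriptions coming from Remark \ref{triang-remark} and Lemma \ref{lem-triang-2}), and checking that concatenation is continuous as a map $Y^I \times_Y Y^I \to Y^I$ — standard in compactly generated spaces but worth stating, since it is the one place the ambient category matters.
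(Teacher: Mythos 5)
Your construction is correct and is essentially the same as the paper's: the paper obtains the map $Q \to \mathcal{P}(f,g)$ by the universal property of pullbacks applied to a diagram involving the concatenation map $m(\alpha,\beta)=\alpha*\beta$ on $Y^I\times_Y Y^I$, and the resulting induced map is exactly your $\Phi(x,\alpha,\beta)=(x,\alpha*\beta)$; you simply unwind the universal-property argument into an explicit formula and verify equivariance and continuity by hand. Both routes hinge on the same key ingredient (path concatenation respecting the endpoint conditions and the $G$-action), and both conclude via Lemma \ref{ineq}.
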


\begin{proof}
Consider the following commutative diagram of $G$-maps:
$$\xymatrix{
{X} \ar@{=}[d] \ar[rr]^(0.35){(f,h,h,g)} & &  {(Y\times Y)\times _Y(Y\times Y)} \ar[d]^t & & {Y^I\times _Y Y^I} \ar[ll]_(0.4){\pi _Y \times _Y \pi _Y} \ar[d]^m \\
{X} \ar[rr]_{(f,g)} & & {Y\times Y} & & {Y^I.} \ar[ll]^{\pi _Y} }$$
Here, $t:(Y\times Y)\times _Y(Y\times Y)\to Y\times Y$ is defined as $t(y_1,y,y,y_2):=(y_1,y_2)$, and $m:Y^I\times _Y Y^I\to Y^I$ is defined as
$$m(\alpha ,\beta ):=\alpha *\beta $$ \noindent that is, the concatenation of paths in $Y.$ Observe that both $t$ and $m$ are $G$-map. Using this diagram and the universal property of pullbacks, one can readily verify the existence of a $G$-map $Q \to \mathcal{P}(f,g)$ between the pullbacks of the horizontal arrows, making the resulting cube diagram commutative. In particular, this $G$-map guarantees the commutativity of the triangle described in the lemma. The inequality $\sct_G(\overline{\pi}_Y^{f,g}) \leq \sct_G(q)$ then follows directly from Lemma~\ref{ineq}.
\end{proof}

Now we are in a position to prove the triangle inequality:

\begin{theorem}\label{triang}
Let $f,g,h:X\to Y$ be $G$-maps where $X\times X$ is completely normal and $X$ is Hausdorff (for instance, if $X$ is metrizable). Then
$$\D_G(f,g)\leq \D_G(f,h)+\D_G(h,g).$$
\end{theorem}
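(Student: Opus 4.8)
The plan is to assemble the three lemmas just established into a single chain of inequalities, using the subadditivity of the equivariant sectional category (Proposition \ref{sub}) for the last step. Recall from the pullback squares above that $\D_G(f,g)=\sct_G(\overline{\pi}_Y^{f,g})$, $\D_G(f,h)=\sct_G(\overline{\pi}_Y^{f,h})$, and $\D_G(h,g)=\sct_G(\overline{\pi}_Y^{h,g})$. First I would apply Lemma \ref{lem-triang-3} to obtain $\sct_G(\overline{\pi}_Y^{f,g})\leq \sct_G(q)$, and then Lemma \ref{lem-triang-2} to obtain $\sct_G(q)\leq \sct_G(\overline{\pi}_Y^{f,h}\times \overline{\pi}_Y^{h,g})$. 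At this point the theorem reduces to the single estimate $\sct_G(\overline{\pi}_Y^{f,h}\times \overline{\pi}_Y^{h,g})\leq \sct_G(\overline{\pi}_Y^{f,h})+\sct_G(\overline{\pi}_Y^{h,g})$.

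To apply Proposition \ref{sub} to this product, I would first note that both $\overline{\pi}_Y^{f,h}$ and $\overline{\pi}_Y^{h,g}$ are $G$-fibrations over $X$: each is a pullback of the free path $G$-fibration $\pi_Y\colon Y^I\to Y\times Y$ along a $G$-map into $Y\times Y$, and pullbacks of $G$-fibrations are $G$-fibrations. The base of the product fibration is $X\times X$, and Proposition \ref{sub} requires $X\times X$ to be completely normal and $X$ to be Hausdorff, which are precisely the standing hypotheses on $X$ (in particular satisfied when $X$ is metrizable). Hence Proposition \ref{sub} delivers the desired subadditive bound.

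Chaining everything together yields
$$\D_G(f,g)=\sct_G(\overline{\pi}_Y^{f,g})\leq \sct_G(q)\leq \sct_G(\overline{\pi}_Y^{f,h}\times \overline{\pi}_Y^{h,g})\leq \sct_G(\overline{\pi}_Y^{f,h})+\sct_G(\overline{\pi}_Y^{h,g})=\D_G(f,h)+\D_G(h,g),$$
which is the asserted triangle inequality. The substantive content has already been front-loaded into Lemmas \ref{lem-triang-1}--\ref{lem-triang-3} --- the categorical ``composite of pullbacks'' device replacing the open-cover refinement arguments used in \cite{macias2022homotopic} --- so in this final step I do not anticipate a genuine obstacle, only the bookkeeping of checking that the point-set hypotheses of Proposition \ref{sub} hold. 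The one place where difficulty would genuinely reappear is if one tried to drop the completely-normal assumption on $X\times X$, since subadditivity of the (equivariant) sectional category is known to require some separation hypothesis on the base.
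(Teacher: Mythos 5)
Your proof is correct and follows exactly the paper's own argument: Lemma \ref{lem-triang-3}, then Lemma \ref{lem-triang-2}, then Proposition \ref{sub} applied to the product, using the completely-normal hypothesis on $X\times X$. The one small addition you make --- explicitly noting that $\overline{\pi}_Y^{f,h}$ and $\overline{\pi}_Y^{h,g}$ are $G$-fibrations as pullbacks of the free path $G$-fibration --- is a reasonable bit of bookkeeping that the paper leaves implicit.
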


\begin{proof}
By lemmas \ref{lem-triang-3} and \ref{lem-triang-2} above one obtains the inequalities
$$\D _G(f,g)=\sct _G(\overline{\pi }_Y^{f,g})\leq \mbox{secat}(q)\leq \sct _G(\overline{\pi }_Y^{f,h}\times \overline{\pi }_Y^{h,g}).$$
Moreover, since $X\times X$ is completely normal, by Proposition \ref{sub} we have:  $$\sct _G(\overline{\pi }_Y^{f,h}\times \overline{\pi }_Y^{h,g})\leq \sct _G(\overline{\pi }_Y^{f,h})+\sct _G(\overline{\pi }_Y^{h,g})=\D _G(f,h)+\D _G(h,g).\qedhere$$
\end{proof}

\begin{remark}
Observe that the arguments presented here remain valid in the nonequivariant setting. Therefore, in view of \cite[Theorem 2.2]{MR2199456} and \cite[Remark 7.1]{MR4045098}, we recover the triangle inequality established by E.~Macías-Virgós and D.~Mosquera-Lois under the sole assumption that $X$ is a normal space. We emphasize that our alternative approach does not rely on the lemma of J.~Oprea and J.~Strom \cite[Lemma~4.3]{oprea2011mixing}. We expect that the same reasoning may extend to other settings, such as Quillen model categories \cite{MR3027356}, whenever the abstract notion of sectional category satisfies the subadditivity property.
\end{remark}

As a consequence of this result, we can refine the inequality stated in Corollary~\ref{ineq-D-cat}.

\begin{corollary}
Let $f,g:X\to Y$ be $G$-maps, where $X\times X$ is completely normal, $X$ is Hausdorff, and $Y$ is a $G$-connected space with $Y^G\neq \emptyset$. Then
$$\D_G(f,g)\leq \ct_G(f)+\ct_G(g).$$
\end{corollary}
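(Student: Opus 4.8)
The plan is to combine the newly established triangle inequality (Theorem~\ref{triang}) with the characterization of $\ct_G$ as an equivariant homotopic distance from a constant map (Proposition~\ref{ct-mor-car}). Since $Y$ is $G$-connected and $Y^G\neq\emptyset$, fix a point $y_0\in Y^G$ and let $c_{y_0}:X\to Y$ be the constant $G$-map at $y_0$. By Proposition~\ref{ct-mor-car} we have $\ct_G(f)=\D_G(f,c_{y_0})$ and $\ct_G(g)=\D_G(g,c_{y_0})$.

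Next I would apply the triangle inequality with the intermediate map taken to be $c_{y_0}$. The hypotheses of Theorem~\ref{triang} are exactly that $X\times X$ is completely normal and $X$ is Hausdorff, which are among our standing assumptions here, so the theorem applies to the three $G$-maps $f,g,c_{y_0}:X\to Y$. This gives
$$\D_G(f,g)\leq \D_G(f,c_{y_0})+\D_G(c_{y_0},g).$$
Using the symmetry $\D_G(c_{y_0},g)=\D_G(g,c_{y_0})$ and the two identifications from the previous paragraph, the right-hand side equals $\ct_G(f)+\ct_G(g)$, which is the desired bound.

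There is essentially no obstacle: the statement is a direct corollary, and the only point requiring any care is checking that the hypotheses line up. Specifically, Proposition~\ref{ct-mor-car} requires $Y$ to be $G$-connected with $Y^G\neq\emptyset$ (satisfied), and Theorem~\ref{triang} requires $X\times X$ completely normal and $X$ Hausdorff (satisfied), so all three maps in the triangle inequality are legitimate $G$-maps $X\to Y$ and the inequality may be invoked verbatim. One could also remark that this genuinely refines Corollary~\ref{ineq-D-cat}, since $\ct_G(f)+\ct_G(g)\leq (\ct_G(f)+1)(\ct_G(g)+1)-1\leq (\ct_G(f)+1)(\ct_G(g)+1)$, but that observation is not needed for the proof itself.
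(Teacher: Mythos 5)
Your proof is correct and matches the paper's argument exactly: both fix $y_0\in Y^G$, apply Proposition~\ref{ct-mor-car} to identify $\ct_G(f)=\D_G(f,c_{y_0})$ and $\ct_G(g)=\D_G(g,c_{y_0})$, and then invoke the triangle inequality of Theorem~\ref{triang} with $c_{y_0}$ as the intermediate map.
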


\begin{proof}
Let $y_0\in Y^G$ be a fixed point, and denote by $c_{y_0}:X\to Y$ the constant map at $y_0$.  
Applying Proposition~\ref{ct-mor-car} and Theorem~\ref{triang}, it follows that
$$\D_G(f,g)\leq \D_G(f,c_{y_0})+\D_G(c_{y_0},g)=\ct_G(f)+\ct_G(g).$$
\end{proof}

We now establish a composition inequality:

\begin{corollary}
Let $f,g:X\to Y$ and $f',g':Y\to Z$ be $G$-maps, where $X\times X$ is completely normal and $X$ is Hausdorff. Then
$$\D_G(f'\circ f,g'\circ g)\leq \D_G(f,g)+\D_G(f',g').$$   
\end{corollary}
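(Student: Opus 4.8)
The plan is to reduce the statement to two applications of results already established, namely the triangle inequality (Theorem~\ref{triang}) and the post-composition inequality from Proposition~\ref{prop:composition-inequalities}. First I would introduce the ``mixed'' composite $f'\circ g:X\to Z$ as an intermediate $G$-map, so that by the triangle inequality (which applies since $X\times X$ is completely normal and $X$ is Hausdorff) one has
$$\D_G(f'\circ f,\, g'\circ g)\leq \D_G(f'\circ f,\, f'\circ g)+\D_G(f'\circ g,\, g'\circ g).$$
Then I would bound each of the two terms on the right separately using the monotonicity of $\D_G$ under composition.

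For the first term, $f'\circ f$ and $f'\circ g$ differ by post-composition with the fixed $G$-map $f'$, so part~(1) of Proposition~\ref{prop:composition-inequalities} gives $\D_G(f'\circ f, f'\circ g)\leq \D_G(f,g)$. For the second term, $f'\circ g$ and $g'\circ g$ differ by pre-composition with the fixed $G$-map $g$, so part~(2) of Proposition~\ref{prop:composition-inequalities} gives $\D_G(f'\circ g, g'\circ g)\leq \D_G(f',g')$. Substituting these two bounds into the triangle inequality yields $\D_G(f'\circ f,g'\circ g)\leq \D_G(f,g)+\D_G(f',g')$, which is the claim. One could symmetrically route through $g'\circ f$ instead of $f'\circ g$; either choice works.

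I do not anticipate any real obstacle here: the only hypothesis needed beyond the two cited results is exactly the completely-normal/Hausdorff condition on $X$ required to invoke Theorem~\ref{triang}, and this is part of the statement. The one point to be careful about is simply choosing the intermediate map so that one gap is a pure post-composition change and the other a pure pre-composition change, so that the two parts of Proposition~\ref{prop:composition-inequalities} apply cleanly without any additional homotopy hypotheses (unlike Proposition~\ref{prop: compo ineq}, which would require an auxiliary $G$-homotopy). The argument is short, so I would present it in full rather than as a sketch.
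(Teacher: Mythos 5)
Your proposal is correct and follows essentially the same route as the paper: insert $f'\circ g$ as the intermediate map, apply Theorem~\ref{triang}, then bound $\D_G(f'\circ f,f'\circ g)$ and $\D_G(f'\circ g,g'\circ g)$ by parts (1) and (2) of Proposition~\ref{prop:composition-inequalities}, respectively. The paper gives exactly this chain of inequalities, so there is nothing to add.
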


\begin{proof}
Applying Theorem \ref{triang} followed by Proposition \ref{prop:composition-inequalities}, it follows that
$$\D_G(f'\circ f,g'\circ g)\leq \D_G(f'\circ f,f'\circ g)+\D_G(f'\circ g,g'\circ g)\leq \D_G(f,g)+\D_G(f',g').$$
\end{proof}

We now prove a product inequality for the equivariant homotopic distance. 
Given $G$-maps $f,g:X\to Y$ and $h:X'\to Y'$, we have
$$\D_G(f\times h, g\times h) \leq \D_G(f,g),$$
since $f|_U\simeq_G g|_U$ implies $(f\times h)|_{U\times X'}\simeq_G(g\times h)|_{U\times X'}$ for every $G$-invariant open subset $U\subseteq X$. 
Similarly, one has
$\D_G(h\times f, h\times g)\leq \D_G(f,g).$

Under additional hypotheses, equality can also be obtained. 
Suppose that $X'^G\neq\emptyset$ and fix $x'_0\in X'^G$. 
Consider the $G$-map $i_1:X\to X\times X'$ defined by $i_1(x)=(x,x'_0)$. 
If $pr_1:Y\times Y'\to Y$ denotes the projection onto the first factor, then by applying Proposition~\ref{prop:composition-inequalities} twice we get
$$\D_G(f,g)=\D_G(pr_1\circ(f\times h)\circ i_1,\,pr_1\circ(g\times h)\circ i_1)\leq \D_G(f\times h,g\times h).$$

Building on this observation, we now establish the general product inequality.

\begin{proposition}\label{product}
Consider $G$-maps $f,g:X\to Y$ and $f',g':X'\to Y'$. 
If $X$ and $X'$ are metrizable, then
$$\D_G(f\times f', g\times g')\leq \D_G(f,g)+\D_G(f',g').$$
\end{proposition}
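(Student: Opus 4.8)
The plan is to reduce the product inequality to the subadditivity of the equivariant sectional category (Proposition \ref{sub}), exactly as in the proof of the triangle inequality (Theorem \ref{triang}). First I would invoke Theorem \ref{thm: DG equals secat} to rewrite everything in terms of sectional categories of pullbacks of free path fibrations:
$$\D_G(f\times f', g\times g') = \sct_G\big(\overline{\pi}_{Y\times Y'}^{\,f\times f',\,g\times g'}\big),$$
where this $G$-fibration is the pullback of $\pi_{Y\times Y'}\colon (Y\times Y')^I\to (Y\times Y')\times(Y\times Y')$ along the map $(f\times f', g\times g')\colon X\times X'\to (Y\times Y')\times (Y\times Y')$.

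The key step is to identify this pullback, up to $G$-fibre homotopy equivalence over $X\times X'$, with the product $\overline{\pi}_Y^{f,g}\times \overline{\pi}_{Y'}^{f',g'}$. To see this, note $(Y\times Y')^I\cong Y^I\times Y'^I$ canonically and $G$-equivariantly, and under the evident shuffle $G$-homeomorphism $(Y\times Y')\times(Y\times Y')\cong (Y\times Y)\times(Y'\times Y')$ the map $(f\times f', g\times g')$ becomes $(f,g)\times(f',g')\colon X\times X'\to (Y\times Y)\times(Y'\times Y')$. Since the fixed-point functor $(-)^H$ and ordinary limits both commute with products, a pullback of a product of maps along a product of maps is the product of the pullbacks; hence
$$\overline{\pi}_{Y\times Y'}^{\,f\times f',\,g\times g'} \;\cong\; \overline{\pi}_Y^{f,g}\times \overline{\pi}_{Y'}^{f',g'}$$
as $G$-fibrations over $X\times X'$. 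This is a routine diagram chase with the universal property of pullbacks, entirely parallel to Lemma \ref{lem-triang-1} and Remark \ref{triang-remark}; I would spell it out only to the extent of displaying the two composite pullback squares.

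Having made this identification, I would apply Proposition \ref{sub} to the two $G$-fibrations $p_1=\overline{\pi}_Y^{f,g}\colon \mathcal{P}(f,g)\to X$ and $p_2=\overline{\pi}_{Y'}^{f',g'}\colon \mathcal{P}(f',g')\to X'$. The hypothesis of Proposition \ref{sub} requires $X\times X'$ completely normal and $X,X'$ Hausdorff; this is where the metrizability assumption on $X$ and $X'$ enters, since a product of two metrizable spaces is metrizable hence completely normal and Hausdorff. We conclude
$$\D_G(f\times f',g\times g') = \sct_G(p_1\times p_2) \leq \sct_G(p_1) + \sct_G(p_2) = \D_G(f,g) + \D_G(f',g'),$$
using Theorem \ref{thm: DG equals secat} once more on each summand.

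The main obstacle I anticipate is purely bookkeeping: carefully writing the canonical $G$-homeomorphisms $(Y\times Y')^I\cong Y^I\times Y'^I$ and the coordinate shuffle, and verifying that under these identifications the structure map $\pi_{Y\times Y'}$ corresponds to $\pi_Y\times\pi_{Y'}$ and the classifying map corresponds to $(f,g)\times(f',g')$, so that the "product of pullbacks is pullback of products" principle genuinely applies. No genuinely new idea is needed beyond the one already used for the triangle inequality; the only subtlety is making sure everything is $G$-equivariant throughout, which follows because all the homeomorphisms in sight are natural and the $G$-actions on all products are diagonal.
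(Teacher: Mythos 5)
Your proof is correct, but it takes a genuinely different route from the paper. The paper's own argument is shorter: it applies the triangle inequality (Theorem~\ref{triang}) with the intermediate map $g\times f'$ together with the already-observed inequalities $\D_G(f\times h,g\times h)\le\D_G(f,g)$ and $\D_G(h\times f,h\times g)\le\D_G(f,g)$, yielding
\[
\D_G(f\times f',g\times g')\le \D_G(f\times f',g\times f')+\D_G(g\times f',g\times g')\le \D_G(f,g)+\D_G(f',g').
\]
You instead unwind Theorem~\ref{thm: DG equals secat} and reduce directly to the subadditivity of equivariant sectional category (Proposition~\ref{sub}) via the identification $\overline{\pi}_{Y\times Y'}^{\,f\times f',\,g\times g'}\cong \overline{\pi}_Y^{f,g}\times\overline{\pi}_{Y'}^{f',g'}$, which is a correct and routine ``pullback of products is product of pullbacks'' check, analogous to what the paper does in Lemma~\ref{lem-triang-1} and Remark~\ref{triang-remark} but applied here to the two-factor setting. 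Both proofs ultimately rest on Proposition~\ref{sub}, since the paper's Theorem~\ref{triang} itself uses it; the paper's proof is more economical once the triangle inequality is in place, while yours is self-contained and applies Proposition~\ref{sub} with base $X\times X'$ rather than $(X\times X')\times(X\times X')$, so in principle it requires only $X\times X'$ completely normal and $X,X'$ Hausdorff---a formally weaker topological hypothesis than what the triangle-inequality route needs, though under the stated metrizability assumption both are satisfied.
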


\begin{proof}
Applying Theorem~\ref{triang} together with the preceding remarks, it follows that
$$\D_G(f\times f', g\times g')\leq \D_G(f\times f', g\times f')+\D_G(g\times f', g\times g')\leq \D_G(f,g)+\D_G(f',g').$$
\end{proof}

Bayeh and Sarkar \cite[Proposition~2.10]{B-S} established the product inequality for the equivariant LS category. As a consequence of Proposition~\ref{product}, we recover this result as a special case:

\begin{corollary}
Let $X$ and $X'$ be metrizable $G$-spaces. 
If $X$ and $X'$ are $G$-connected and $X^G\neq\emptyset\neq X'^G$, then
$$\ct_G(X\times X')\leq \ct_G(X)+\ct_G(X').$$
\end{corollary}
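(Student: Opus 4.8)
The plan is to derive the corollary directly from \Cref{product} by recognizing the equivariant LS category as a special value of the equivariant homotopic distance. Recall from the discussion preceding \Cref{ineq-D-cat} (and the proof there) that for a $G$-connected $G$-space $X$ with $x_0 \in X^G$ one has $\ct_G(X) = \D_G(id_X, c_{x_0})$, where $c_{x_0}$ is the constant $G$-map at the fixed point. So I would first fix points $x_0 \in X^G$ and $x_0' \in X'^G$, and observe that $X \times X'$ is again metrizable (a product of two metrizable spaces is metrizable), is $G$-connected, and contains the fixed point $(x_0, x_0') \in (X \times X')^G$, so that $\ct_G(X \times X') = \D_G(id_{X \times X'}, c_{(x_0,x_0')})$.

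The key step is then to identify $id_{X \times X'} = id_X \times id_{X'}$ and $c_{(x_0,x_0')} = c_{x_0} \times c_{x_0'}$ as products of $G$-maps, so that \Cref{product} applies with $f = id_X$, $g = c_{x_0}$, $f' = id_{X'}$, $g' = c_{x_0'}$:
$$\ct_G(X \times X') = \D_G(id_X \times id_{X'}, c_{x_0} \times c_{x_0'}) \leq \D_G(id_X, c_{x_0}) + \D_G(id_{X'}, c_{x_0'}) = \ct_G(X) + \ct_G(X').$$
This completes the argument.

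I do not anticipate a serious obstacle here; the only points requiring a line of justification are that metrizability passes to finite products (so the hypothesis of \Cref{product} is met) and that the constant map at $(x_0,x_0')$ literally factors as the product of the two constant maps, which is immediate. One could alternatively phrase the middle equality using \Cref{ct-mor-car} if one prefers constant maps into a possibly different basepoint, but since $X \times X'$ is $G$-connected with nonempty $G$-fixed set, the identity $\ct_G(X\times X') = \D_G(id_{X\times X'}, c_{(x_0,x_0')})$ holds on the nose. The $G$-connectedness of $X \times X'$ follows from the fact that $(X \times X')^H = X^H \times X'^H$ for every closed subgroup $H \leq G$, together with the fact that a product of path-connected spaces is path-connected.
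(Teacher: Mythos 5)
Your proposal is correct and follows essentially the same route as the paper: apply Proposition \ref{product} with $f=id_X$, $g=c_{x_0}$, $f'=id_{X'}$, $g'=c_{x_0'}$ and identify $\ct_G$ with the distance to a constant map via Proposition \ref{ct-mor-car}. The extra verifications you spell out (metrizability and $G$-connectedness of the product, the factorization of the constant map) are exactly the routine details the paper leaves implicit.
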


\begin{proof}
Apply Proposition~\ref{product} and Proposition~\ref{ct-mor-car} to 
$f=id_X$, $f'=id_{X'}$, $g=c_{x_0}$, and $g'=c_{x'_0}$, 
where $x_0\in X^G$ and $x'_0\in X'^G$.    
\end{proof}

For a compact Lie group $G$ acting on smooth $G$-manifolds, the product inequality for equivariant topological complexity was proved by González and Grant in \cite[Theorem~4.2]{G-G}. 
We extend this result to arbitrary metrizable $G$-spaces:

\begin{corollary}
Let $X$ and $X'$ be metrizable $G$-spaces. Then
$$\TC_G(X\times X')\leq \TC_G(X)+\TC_G(X').$$   
\end{corollary}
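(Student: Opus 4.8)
The plan is to reduce the statement for the equivariant topological complexity to the product inequality for the equivariant homotopic distance established in Proposition~\ref{product}, exactly as the preceding corollary does for $\ct_G$. First I would recall, via Corollary~\ref{cor: DG equals TCG}, that for any $G$-space $Z$ we have $\TC_G(Z)=\D_G(pr_1^Z,pr_2^Z)$, where $pr_1^Z,pr_2^Z:Z\times Z\to Z$ are the two projections. Applying this to $Z=X\times X'$ gives
$$\TC_G(X\times X')=\D_G\bigl(pr_1^{X\times X'},pr_2^{X\times X'}\bigr).$$

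The key step is to identify the two projections of $X\times X'$ with products of the projections of $X$ and of $X'$ under the canonical shuffle $G$-homeomorphism $\tau:(X\times X')\times(X\times X')\xrightarrow{\cong}(X\times X)\times(X'\times X')$ that swaps the two middle factors (which is $G$-equivariant for the diagonal actions). Under $\tau$ one has $pr_1^{X\times X'}=(pr_1^X\times pr_1^{X'})\circ\tau$ and $pr_2^{X\times X'}=(pr_2^X\times pr_2^{X'})\circ\tau$. Since $\tau$ is a $G$-homeomorphism, it has a (two-sided) $G$-homotopy inverse, so Proposition~\ref{prop: left right hootopy inverse} yields
$$\TC_G(X\times X')=\D_G\bigl(pr_1^X\times pr_1^{X'},\,pr_2^X\times pr_2^{X'}\bigr).$$
Now I would invoke Proposition~\ref{product} with $f=pr_1^X$, $g=pr_2^X$, $f'=pr_1^{X'}$, $g'=pr_2^{X'}$ — all maps being between metrizable $G$-spaces since $X,X'$ are metrizable and finite products of metrizable spaces are metrizable — to obtain
$$\D_G\bigl(pr_1^X\times pr_1^{X'},\,pr_2^X\times pr_2^{X'}\bigr)\leq \D_G(pr_1^X,pr_2^X)+\D_G(pr_1^{X'},pr_2^{X'})=\TC_G(X)+\TC_G(X'),$$
where the last equality is again Corollary~\ref{cor: DG equals TCG}. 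Combining the displays gives the claim.

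The only genuinely delicate point is making sure the bookkeeping with the shuffle map is correct: one must check that $\tau$ intertwines the diagonal $G$-actions on $(X\times X')^{\times 2}$ and on $(X\times X)\times(X'\times X')$, and that the composites of projections factor as claimed — both are immediate from the definitions but should be stated cleanly. I do not anticipate any real obstacle here, since Proposition~\ref{product} already packages the hard analytic input (the triangle inequality via subadditivity of $\sct_G$ and the metrizability hypothesis). Thus the proof is essentially a one-line application of Proposition~\ref{product}, mirroring the corresponding $\ct_G$ corollary.

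Concretely, the proof would read:

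\begin{proof}
By Corollary~\ref{cor: DG equals TCG}, $\TC_G(X\times X')=\D_G(pr_1^{X\times X'},pr_2^{X\times X'})$. The shuffle $G$-homeomorphism $\tau$ identifies $pr_i^{X\times X'}$ with $(pr_i^X\times pr_i^{X'})\circ\tau$ for $i=1,2$, so by Proposition~\ref{prop: left right hootopy inverse} (applied to the $G$-homotopy equivalence $\tau$ on the source) we get $\TC_G(X\times X')=\D_G(pr_1^X\times pr_1^{X'},pr_2^X\times pr_2^{X'})$. Since $X$ and $X'$ are metrizable, Proposition~\ref{product} applies with $f=pr_1^X$, $g=pr_2^X$, $f'=pr_1^{X'}$, $g'=pr_2^{X'}$, giving
$$\D_G(pr_1^X\times pr_1^{X'},pr_2^X\times pr_2^{X'})\leq \D_G(pr_1^X,pr_2^X)+\D_G(pr_1^{X'},pr_2^{X'})=\TC_G(X)+\TC_G(X'),$$
where we used Corollary~\ref{cor: DG equals TCG} once more. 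Hence $\TC_G(X\times X')\leq \TC_G(X)+\TC_G(X')$.
\end{proof}
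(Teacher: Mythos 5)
Your proof is correct and follows essentially the same route as the paper: rewriting $\TC_G$ via Corollary~\ref{cor: DG equals TCG}, using the shuffle $G$-homeomorphism to identify $pr_i^{X\times X'}$ with $(pr_i^X\times pr_i^{X'})\circ\tau$, and then invoking Proposition~\ref{product}. The only cosmetic difference is that you explicitly cite Proposition~\ref{prop: left right hootopy inverse} to justify the passage through $\tau$, a step the paper leaves implicit.
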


\begin{proof}
Consider $f=pr_1^X:X\times X\to X$, $f'=pr_1^{X'}:X'\times X'\to X'$, $g=pr_2^X:X\times X\to X$, and $g'=pr_2^{X'}:X'\times X'\to X'$. 
There is a natural $G$-homeomorphism making the following diagram commute:
$$
\xymatrix{
{(X\times X')\times(X\times X')} \ar[rr]^{\cong} \ar[dr]_{pr_\varepsilon^{X\times X'}} & & {(X\times X)\times(X'\times X')} \ar[dl]^{pr_\varepsilon^X\times pr_\varepsilon^{X'}} \\
 & {X\times X'} & 
}
$$
for all $\varepsilon\in\{1,2\}$. 
Therefore,
$$\D_G(pr_{1}^{X\times X'}, pr_{2}^{X\times X'})=\D_G(pr_1^X\times pr_1^{X'},\,pr_2^X\times pr_2^{X'}).$$
Applying Proposition~\ref{product}, the desired inequality follows.
\end{proof}

\begin{remark}
The results of this section might also be obtained—possibly under weaker assumptions on the domain space $X$—by adapting to the equivariant setting the lemma of J.~Oprea and J.~Strom \cite[Lemma 4.3]{oprea2011mixing}, used in the proof of the triangle inequality by E.~Macías-Virgós and D.~Mosquera-Lois. 
We have instead presented an alternative approach, which may be applicable in broader contexts beyond the equivariant case.
\end{remark}

\section{Hopf $G$-spaces}\label{sec: Hopg G-space}

Equivariant analogues of Hopf spaces, known as \emph{Hopf $G$-spaces}, are a classical object of study in equivariant homotopy theory. One of the earliest systematic treatments of these spaces can be found in Bredon’s lecture notes \cite{Bredon}.  

In this section, we recall the definition of Hopf $G$-spaces and explore their connection with the equivariant homotopic distance.  

\begin{definition}
A \emph{Hopf $G$-space} is a pointed $G$-space $(X,x_0)$ with $x_0\in X^G$, together with a pointed $G$-map (the \emph{equivariant multiplication})
\[
  \mu : X \times X \longrightarrow X, 
  \qquad (x,y)\mapsto x\cdot y.
\]
The multiplication satisfies unit conditions: there exist pointed $G$-homotopies
\[
  \mu \circ i_1 \;\simeq_G^*\; id_X,
  \qquad
  \mu \circ i_2 \;\simeq_G^*\; id_X,
\]
where $i_1,i_2:X\to X\times X$ are defined by $i_1(x)=(x,x_0)$ and $i_2(x)=(x_0,x)$.  

The space $X$ is said to be \emph{$G$-homotopy associative} if, in addition, there exists a pointed $G$-homotopy between the two natural compositions of the multiplication:
\[
  \mu \circ (\mu \times id_X) \;\simeq_G^*\; 
  \mu \circ (id_X \times \mu).
\]
\end{definition}

\begin{remark}
Throughout this section, the notation $\simeq_G$ denotes $G$-equivariant homotopy, while $\simeq_G^*$ stands for \emph{pointed} $G$-equivariant homotopy, i.e. homotopies through $G$-maps that fix the chosen basepoint.
\end{remark}

A \emph{$G$-homotopy right inverse} of an equivariant multiplication $\mu$ on a pointed $G$-space $X$ is a pointed $G$-map $u:X\to X$ such that
\[
  \mu\circ (id_X,u)\;\simeq_G^*\; s_X\circ p_X,
\]
where $p_X:X\to *$ is the projection to the basepoint and $s_X:* \to X$ is the inclusion.  
An analogous definition applies for $G$-homotopy left inverses, replacing $(id_X,u)$ with $(u,id_X)$.

If $\mu$ is $G$-homotopy associative, then any pointed $G$-map that is a $G$-homotopy right inverse is automatically also a $G$-homotopy left inverse. In this case one simply speaks of a \emph{$G$-homotopy inverse}, customarily denoted by $u(x)=x^{-1}$.

A \emph{group-like $G$-space} is, by definition, a $G$-homotopy associative Hopf $G$-space whose multiplication admits a $G$-homotopy inverse.  
As in the classical case (cf.~\cite[page~119]{whitehead2012elements}), one readily verifies that a $G$-homotopy associative Hopf $G$-space $X$ is group-like if and only if the \emph{equivariant shearing map}
\[
  sh:X\times X\longrightarrow X\times X, 
  \qquad (x,y)\mapsto (x,x\cdot y),
\]
is a pointed $G$-homotopy equivalence.

\bigskip
We now focus on Hopf $G$-spaces that admit a division $G$-map.

\begin{definition}
Let $X$ be a Hopf $G$-space.  
A \emph{division $G$-map} is a pointed $G$-map $\delta:X\times X\to X$ such that there exists a pointed $G$-homotopy
\[
  \mu \circ (pr_1^X,\delta)\;\simeq_G^*\;pr_2^X,
\]
where $pr_1^X,pr_2^X:X\times X\to X$ are the canonical projections.
\end{definition}

\begin{lemma}\label{division}
Let $X$ be a Hopf $G$-space. Then $X$ admits a division $G$-map if and only if the equivariant shearing map
\[
  sh=(pr_1^X,\mu):X\times X\longrightarrow X\times X
\]
admits a pointed $G$-homotopy right inverse.
\end{lemma}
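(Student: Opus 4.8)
The plan is to establish both implications by directly constructing the required $G$-maps and verifying the pointed $G$-homotopies, using only the universal property of the product and elementary manipulations of $G$-homotopies. Throughout, I write $sh = (pr_1^X, \mu): X\times X\to X\times X$ for the equivariant shearing map.

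First I would prove the forward implication. Suppose $\delta: X\times X\to X$ is a division $G$-map, so there is a pointed $G$-homotopy $\mu\circ(pr_1^X,\delta)\simeq_G^* pr_2^X$. Define the pointed $G$-map $r:=(pr_1^X,\delta):X\times X\to X\times X$. I claim $r$ is a pointed $G$-homotopy right inverse of $sh$. Indeed, compute
$$
sh\circ r = (pr_1^X,\mu)\circ(pr_1^X,\delta) = \bigl(pr_1^X\circ(pr_1^X,\delta),\ \mu\circ(pr_1^X,\delta)\bigr) = \bigl(pr_1^X,\ \mu\circ(pr_1^X,\delta)\bigr).
$$
The first coordinate is $pr_1^X$ and the second coordinate is $G$-homotopic (pointedly) to $pr_2^X$; assembling these coordinatewise $G$-homotopies (one of them constant) gives a pointed $G$-homotopy $sh\circ r\simeq_G^* (pr_1^X,pr_2^X)=id_{X\times X}$, as desired.

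Next I would prove the converse. Suppose $sh$ admits a pointed $G$-homotopy right inverse $r:X\times X\to X\times X$, that is, $sh\circ r\simeq_G^* id_{X\times X}$. Write $r=(r_1,r_2)$ with $r_1,r_2:X\times X\to X$ the two component $G$-maps obtained by post-composing $r$ with $pr_1^X$ and $pr_2^X$. Then $sh\circ r = (pr_1^X\circ r,\ \mu\circ r)=(r_1,\ \mu\circ(r_1,r_2))$, and $sh\circ r\simeq_G^* id_{X\times X}$ forces, after post-composing with $pr_1^X$ and $pr_2^X$ respectively, the pointed $G$-homotopies $r_1\simeq_G^* pr_1^X$ and $\mu\circ(r_1,r_2)\simeq_G^* pr_2^X$. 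I would then define $\delta:=r_2:X\times X\to X$ and check that $\mu\circ(pr_1^X,\delta)\simeq_G^* pr_2^X$: from $r_1\simeq_G^* pr_1^X$ one gets $(r_1,r_2)\simeq_G^*(pr_1^X,\delta)$ (a $G$-homotopy in the first coordinate, constant in the second, assembled via the universal property of the product applied to the homotopy $X\times X\times I\to X\times X$), hence $\mu\circ(pr_1^X,\delta)\simeq_G^*\mu\circ(r_1,r_2)\simeq_G^* pr_2^X$ by transitivity of pointed $G$-homotopy.

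The arguments are essentially formal, so there is no deep obstacle; the only point requiring a little care is the repeated assembly of coordinatewise $G$-homotopies into a single $G$-homotopy of maps into a product — one must note that a pair of pointed $G$-homotopies $H_1:F_1\simeq_G^* G_1$ and $H_2:F_2\simeq_G^* G_2$ yields the pointed $G$-homotopy $(H_1,H_2):(F_1,F_2)\simeq_G^*(G_1,G_2)$ via the universal property, and that post-composing a $G$-homotopy with a $G$-map (here $\mu$ or a projection) again yields a $G$-homotopy, all of which respect basepoints since every map in sight is pointed. I would state this assembly principle once at the start and then invoke it freely in both directions.
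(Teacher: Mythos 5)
Your proof is correct and follows essentially the same approach as the paper: in the forward direction, you take the candidate right inverse to be $(pr_1^X,\delta)$ and check coordinatewise; in the converse, you decompose the right inverse into components, set $\delta$ equal to the second, and compare components of $sh\circ r\simeq_G^* id$ to extract the division-map homotopy. The only presentational difference is that you explicitly flag the coordinatewise-assembly-of-homotopies principle, which the paper uses silently.
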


\begin{proof}
Suppose $\delta:X\times X\to X$ is a division $G$-map, and set $\psi:=(pr_1^X,\delta)$. Then
\[
  sh\circ\psi
   =(pr_1^X,\mu)\circ(pr_1^X,\delta)
   =(pr_1^X,\mu\circ(pr_1^X,\delta))
   \;\simeq_G^*\;(pr_1^X,pr_2^X)
   =id_{X\times X}.
\]

Conversely, suppose $\psi:X\times X\to X\times X$ is a pointed $G$-homotopy right inverse of $sh$.  
Write $\psi=(\psi_1,\psi_2)$ and define $\delta:=\psi_2$.  
Then
\[
  (pr_1^X,pr_2^X)=id_{X\times X}
   \;\simeq_G^*\;sh\circ\psi
   =(pr_1^X,\mu)\circ(\psi_1,\psi_2)
   =(\psi_1,\mu\circ(\psi_1,\delta)).
\]
Comparing components yields $pr_1^X\simeq_G^*\psi_1$ and $pr_2^X\simeq_G^*\mu\circ(pr_1^X,\delta)$, which shows that $\delta$ is indeed a division $G$-map.
\end{proof}

If $X$ is a group-like $G$-space, then—as previously noted—the shearing $G$-map is a pointed $G$-homotopy equivalence. Consequently, by Lemma~\ref{division}, the space $X$ admits a division $G$-map $\delta: X \times X \to X$ defined by $\delta(x, y) = x^{-1} \cdot y.$
However, a Hopf $G$-space does not necessarily need to be group-like in order to admit a division $G$-map. In the following, we explore sufficient conditions for the existence of such a $G$-map.

\begin{proposition}
Let $X$ be a $G$-connected Hopf $G$-space endowed with the structure of a finite $G$-CW complex. Then $X$ admits a division $G$-map.    
\end{proposition}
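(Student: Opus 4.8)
The plan is to prove the stronger statement that the equivariant shearing map $sh=(pr_1^X,\mu)\colon X\times X\to X\times X$ is a \emph{pointed} $G$-homotopy equivalence; the existence of a division $G$-map is then immediate from \Cref{division}, since a pointed $G$-homotopy inverse of $sh$ is in particular a pointed $G$-homotopy right inverse. To show that $sh$ is a $G$-homotopy equivalence I would invoke the equivariant Whitehead theorem: as $X$ is a finite $G$-CW complex, so is $X\times X$, and it therefore suffices to check that, for every closed subgroup $H\le G$, the fixed-point map $sh^H\colon (X\times X)^H\to (X\times X)^H$ is an ordinary (weak, hence genuine) homotopy equivalence of CW complexes.

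The next step is the fixed-point analysis. Since $(-)^H$ preserves products we identify $(X\times X)^H=X^H\times X^H$, and under this identification $sh^H$ becomes $(pr_1^{X^H},\mu^H)$, that is, the non-equivariant shearing map of the triple $(X^H,\mu^H,x_0)$; note that $x_0\in X^G\subseteq X^H$. I would then verify that $X^H$ is a path-connected H-space of CW type: it is path-connected because $X$ is $G$-connected; it is a CW complex because $X$ is a $G$-CW complex; and it is an H-space with multiplication $\mu^H$ and homotopy unit $x_0$ because the pointed $G$-homotopies $\mu\circ i_1\simeq_G^{*}\mathrm{id}_X$ and $\mu\circ i_2\simeq_G^{*}\mathrm{id}_X$ restrict, along $(X\times I)^H=X^H\times I$, to pointed homotopies $\mu^H\circ i_1^H\simeq^{*}\mathrm{id}_{X^H}$ and $\mu^H\circ i_2^H\simeq^{*}\mathrm{id}_{X^H}$, where $i_1^H,i_2^H\colon X^H\to X^H\times X^H$ are again the canonical inclusions at $x_0$.

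The decisive input is then the classical fact that the shearing map of a path-connected H-space having the homotopy type of a CW complex is a homotopy equivalence; one way to see this is to note that $sh^H$ is a fibrewise map over $pr_1^{X^H}$ whose restriction to the fibre over $a$ is the left translation $L_a$, which by path-connectedness of $X^H$ is homotopic to $L_{x_0}\simeq\mathrm{id}_{X^H}$, so that $sh^H$ is a fibrewise homotopy equivalence and hence a homotopy equivalence by Dold's theorem (cf.\ \cite{whitehead2012elements}). Applying this to each $X^H$ shows every $sh^H$ is a homotopy equivalence, whence $sh$ is a $G$-homotopy equivalence by the equivariant Whitehead theorem. Finally, since the inclusion $\{x_0\}\hookrightarrow X$ of the fixed basepoint into the $G$-CW complex $X$ is a $G$-cofibration, the space $X\times X$ is well-pointed at $(x_0,x_0)$, and a (free) $G$-homotopy equivalence between well-pointed $G$-spaces that is pointed is automatically a pointed $G$-homotopy equivalence; thus $sh$ admits a pointed $G$-homotopy inverse, and \Cref{division} produces the desired division $G$-map.

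I expect the main obstacle to be organisational rather than conceptual: one must carefully reduce the equivariant assertion to a family of non-equivariant statements about the fixed-point spaces $X^H$ and then check that each such space genuinely satisfies the hypotheses of the classical H-space theorem (path-connectedness, CW type, and that $\mu^H$ is unital), keeping track of the fact that $x_0$ lies in every $X^H$ and that all the relevant homotopies remain pointed. The finiteness hypothesis on $X$ is used essentially to guarantee that $X\times X$ is again a $G$-CW complex, so that the equivariant Whitehead theorem applies directly; otherwise the argument goes through for any $G$-CW complex Hopf $G$-space.
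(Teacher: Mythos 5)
Your proposal is correct and follows essentially the same route as the paper: reduce to showing the shearing map $sh$ is a pointed $G$-homotopy equivalence via \Cref{division}, pass to $H$-fixed points to reduce to the classical statement that the shearing map of a path-connected $H$-space is a (weak) equivalence, and then invoke the equivariant Whitehead theorem using that $X\times X$ is a $G$-CW complex. The only divergence is in the final well-pointedness step: the paper appeals to the fact that a finite $G$-CW complex is a $G$-ENR and is therefore well-based, whereas you argue directly that $\{x_0\}\hookrightarrow X$ is a $G$-cofibration since $x_0\in X^G$ and $X$ is a $G$-CW complex. Both routes are fine; the $G$-ENR argument has the advantage of being a direct citation, while your $G$-cofibration route deserves a word of care (it is cleanest when $x_0$ is taken to be a $0$-cell, which one can always arrange up to $G$-homotopy, or one can route back through the $G$-ENR fact). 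Your invocation of the Dold fibrewise argument to prove the classical shearing-map claim is more explicit than the paper's bare citation, but the substance is the same.
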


\begin{proof}
By Lemma \ref{division}, it suffices to prove that the equivariant shearing map
\[
  sh:X\times X \longrightarrow X\times X
\]
is a pointed $G$-homotopy equivalence. 
For any closed subgroup $H\leq G$, the fixed point space $X^H$ inherits the structure of a classical Hopf space with multiplication $\mu^H$. 
The induced map on fixed points,
\[
  sh^H:X^H\times X^H \longrightarrow X^H\times X^H,\qquad (x,y)\mapsto(x,x\cdot y),
\]
is precisely the classical shear map on $X^H$. 
As the shear map of any path-connected Hopf space is a weak homotopy equivalence, it follows that $sh^H$ is a weak equivalence for every closed subgroup $H\leq G$. 
Therefore $sh$ is an equivariant weak equivalence. 
Since $X\times X$ is a $G$-CW complex, the equivariant Whitehead theorem implies that $sh$ is in fact a $G$-homotopy equivalence.
Moreover, by \cite[Proposition 8.12]{MR889050} and \cite[Proposition 2.7]{G-symmetrized}, the $G$-space $X\times X$ is a $G$-ENR, hence a well-based $G$-space. 
We conclude that $sh$ is a pointed $G$-homotopy equivalence.
\end{proof}

We now establish a connection between the equivariant homotopic distance and the equivariant Lusternik–Schnirelmann category in the setting of Hopf $G$-spaces with a division $G$-map.

\begin{theorem}\label{thm:phd-leq-pcat}
Let $X$ be a $G$-connected Hopf $G$-space with a division $G$-map, and let $f,g:X\times X\rightarrow X$ be pointed $G$-maps. Then, we have the inequality $\D_G(f,g)\leq \ct_G(X).$
\end{theorem}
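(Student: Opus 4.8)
The plan is to use the division $G$-map to replace the pair $(f,g)$ by a single auxiliary pointed $G$-map whose equivariant category is controlled by $\ct_G(X)$. Set $h:=\delta\circ(f,g):X\times X\to X$, which is again a pointed $G$-map. Precomposing the defining $G$-homotopy $\mu\circ(pr_1^X,\delta)\simeq_G^* pr_2^X$ with the $G$-map $(f,g):X\times X\to X\times X$, and using that $(pr_1^X,\delta)\circ(f,g)=(f,h)$ while $pr_2^X\circ(f,g)=g$, I obtain a global $G$-homotopy $\mu\circ(f,h)\simeq_G g$.

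Next I would bound the equivariant category of $h$. Since $X$ is $G$-connected and $X^G\neq\emptyset$ (it contains the basepoint $x_0$), the earlier corollary asserting $\ct_G(\varphi)\leq\ct_G(Y)$ for a $G$-map $\varphi$ with $G$-connected target $Y$ having $Y^G\neq\emptyset$ applies to $h$, giving $\ct_G(h)\leq\ct_G(X)$; write $n:=\ct_G(X)$. Fix a cover $\{U_0,\dots,U_n\}$ of $X\times X$ by $G$-invariant open sets such that, for each $i$, $h|_{U_i}$ is $G$-homotopic to a $G$-map $U_i\to X$ with image in a single orbit. By \cite[Lemma~3.14]{EqTC} that orbit-valued map is in turn $G$-homotopic to the constant $G$-map $c_{x_0}|_{U_i}$, so $h|_{U_i}\simeq_G c_{x_0}|_{U_i}$.

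Finally I would verify that $f$ and $g$ become $G$-homotopic on each $U_i$. Restricting the global $G$-homotopy $\mu\circ(f,h)\simeq_G g$ to $U_i$, then composing the $G$-homotopy $h|_{U_i}\simeq_G c_{x_0}|_{U_i}$ with $\mu\circ(f|_{U_i},-)$, then using the identity $(f|_{U_i},c_{x_0}|_{U_i})=i_1\circ f|_{U_i}$, and finally invoking the Hopf relation $\mu\circ i_1\simeq_G\mathrm{id}_X$, one gets
$$g|_{U_i}\;\simeq_G\;\mu\circ(f|_{U_i},h|_{U_i})\;\simeq_G\;\mu\circ(f|_{U_i},c_{x_0}|_{U_i})\;=\;\mu\circ i_1\circ f|_{U_i}\;\simeq_G\;f|_{U_i}.$$
Every homotopy above is $G$-equivariant, being a composite of $G$-maps with $G$-homotopies. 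Hence $\{U_0,\dots,U_n\}$ witnesses $\D_G(f,g)\leq n=\ct_G(X)$, which is the claim.

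I do not expect a genuine obstacle here: the whole content lies in the construction of $h=\delta\circ(f,g)$ and in the observation that the division property survives precomposition by $(f,g)$. The only points needing care are keeping all the homotopies $G$-equivariant throughout and correctly applying \cite[Lemma~3.14]{EqTC} to pass from an orbit-valued $G$-map on $U_i$ to the constant $G$-map at the fixed basepoint $x_0$; the latter is exactly where $G$-connectedness and $X^G\neq\emptyset$ enter.
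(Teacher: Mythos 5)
Your proof is correct and takes essentially the same approach as the paper's: both hinge on the auxiliary map $h=\delta\circ(f,g)$ (called $\phi$ in the paper) and the same chain of $G$-homotopies $g|_{U}\simeq_G\mu\circ(f|_U,h|_U)\simeq_G\mu\circ(f|_U,c_{x_0}|_U)\simeq_G f|_U$. The only cosmetic difference is that you obtain the cover by quoting the corollary $\ct_G(h)\leq\ct_G(X)$, whereas the paper constructs it explicitly as the preimage under $\phi$ of a $G$-categorical cover of $X$; these are the same mechanism, since that corollary's proof is itself a pullback argument.
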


\begin{proof} 
First observe that, by Proposition \ref{ct-mor-car}, we have 
$$\ct_G(X)=\ct _G(id_X)=\D _G(id_X,c_{x_0}),$$ 
where $x_0$ is the base point of $X.$ 
If we consider $U$ to be a $G$-categorical open subset of $X$ (i.e., $inc _U\simeq _G c_{x_0}|_U$), the division $G$-map $\delta: X \times X \to X$, and the composite map $\phi := \delta \circ (f, g): X \times X \to X,$
along with the invariant open subset $V := \phi^{-1}(U)$, then we obtain the strictly commutative diagram
$$
\xymatrix{
{V} \ar[d]_{\phi|_V} \ar@{^{(}->}[rr]^{\mbox{inc}_V} & & {X \times X} \ar[d]^{\phi} \\
{U} \ar@{^{(}->}[rr]_{\mbox{inc}_U} & & {X}.
}
$$

This gives rise to the following sequence of pointed $G$-homotopies:
$$
\begin{array}{lcl}
g|_V &=& g \circ \mbox{inc}_V \\
     &=& pr_2^X \circ (f, g) \circ \mbox{inc}_V \\
     &\simeq^*_G& \mu \circ (pr_1^X, \delta) \circ (f, g) \circ \mbox{inc}_V \\
     &=& \mu \circ (f, \delta \circ (f, g)) \circ \mbox{inc}_V \\
     &=& \mu \circ (f, \phi) \circ \mbox{inc}_V \\
     &=& \mu \circ (f \circ \mbox{inc}_V, \phi \circ \mbox{inc}_V) \\
     &=& \mu \circ (f|_V, \mbox{inc}_U \circ \phi|_V) \\
     &\simeq_G& \mu \circ (f|_V, c_{x_0}|_U \circ \phi|_V) \\
     &=& \mu \circ (f|_V,c_{x_0}\circ f|_V) \\
     &=& \mu \circ (id_X, c_{x_0}) \circ f|_V \\
     &\simeq^*_G& id_X \circ f|_V = f|_V.
\end{array}
$$
Thus, we have $g|_{V} \simeq_G f|_{V}$. By applying this argument to invariant open covers, we derive the desired inequality.
\end{proof}

Lupton and Scherer \cite[Theorem 1]{L-S} showed that the topological complexity of a path-connected CW $H$-space coincides with its Lusternik–Schnirelmann category. The following result establishes an equivariant analogue of this fact.

\begin{corollary}\label{cor: TCG equals catG}
Let $X$ be a $G$-connected Hopf $G$-space with a division $G$-map. Then
$$\TC_G(X) = \ct_G(X).$$ 
\end{corollary}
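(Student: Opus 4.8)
The plan is to sandwich $\TC_G(X)$ between $\ct_G(X)$ from both sides. For the lower bound $\ct_G(X)\le\TC_G(X)$, I would invoke the general inequality from \Cref{consecu}(2): taking $x=x_0$, the base point, which is a $G$-fixed point, the stabilizer is $H=G_{x_0}=G$, so $\ct_G(X)=\ct_H(X)\le\TC_G(X)$. (Alternatively, this also follows from \Cref{cor: DG equals TCG} together with the fact that $\ct_G(X)=\D_G(i_1,i_2)$ and the composition inequalities of \Cref{prop:composition-inequalities}, exactly as in the proof of \Cref{consecu}(2).)

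For the upper bound $\TC_G(X)\le\ct_G(X)$, I would apply \Cref{thm:phd-leq-pcat} directly. By \Cref{cor: DG equals TCG} we have $\TC_G(X)=\D_G(pr_1^X,pr_2^X)$, where $pr_1^X,pr_2^X:X\times X\to X$ are the two projections. These are pointed $G$-maps (each sends the base point $(x_0,x_0)$ to $x_0\in X^G$), and $X$ is by hypothesis a $G$-connected Hopf $G$-space admitting a division $G$-map. Hence \Cref{thm:phd-leq-pcat} gives $\D_G(pr_1^X,pr_2^X)\le\ct_G(X)$, i.e. $\TC_G(X)\le\ct_G(X)$.

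Combining the two inequalities yields $\TC_G(X)=\ct_G(X)$. I do not anticipate a genuine obstacle here, since both halves are immediate consequences of results already established in the excerpt; the only point requiring a moment's care is checking that $pr_1^X$ and $pr_2^X$ qualify as the pointed $G$-maps $f,g$ in the statement of \Cref{thm:phd-leq-pcat}, which is clear because $X\times X$ is pointed at $(x_0,x_0)$ and each projection is a pointed $G$-map onto $X$.
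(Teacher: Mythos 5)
Your proposal is correct and follows essentially the same route as the paper: the upper bound $\TC_G(X)\le\ct_G(X)$ via \Cref{thm:phd-leq-pcat} applied to $f=pr_1^X$, $g=pr_2^X$, and the lower bound via \Cref{consecu}(2) with $H=G_{x_0}=G$. Your extra remark verifying that the projections are pointed $G$-maps is a correct and welcome detail that the paper leaves implicit.
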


\begin{proof}
The inequality $ \TC_G(X) \leq \ct_G(X) $ follows by taking $ f = pr_1 $ and $ g = pr_2 $ in \Cref{thm:phd-leq-pcat}. The reverse inequality follows from Corollary~\ref{consecu}(2).
\end{proof}

In the following result, we use the product of two $G$-maps $f, g : X \to Y$, defined by
$$
f \cdot g := \mu \circ (f, g) = \mu \circ (f \times g) \circ \Delta,
$$
where $Y$ is a Hopf $G$-space. Here, $\Delta : X \to X \times X$ denotes the diagonal map, $f \times g : X \times X \to Y \times Y$ is the product $G$-map, and $\mu : Y \times Y \to Y$ is the equivariant multiplication arising from the Hopf $G$-space structure on $Y$.

\begin{proposition}\label{prop:multiplication-fib-spaces-ineq}
Let $f, g, h : X \to Y$ be $G$-maps, where $Y$ is a Hopf $G$-space. Then
$$
\D_G(f \cdot h, g \cdot h) \leq \D_G(f, g).
$$
\end{proposition}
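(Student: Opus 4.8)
The plan is to reduce the statement to the composition inequality already proved in Proposition~\ref{prop:composition-inequalities}. The key observation is that $f\cdot h$ and $g\cdot h$ are both obtained by post-composing a single $G$-map into $Y\times Y$ with the equivariant multiplication $\mu$, together with a diagonal on the source. More precisely, consider the $G$-map
$$
\Phi \colon X\times X\to Y\times Y,\qquad \Phi=(f\times h),
$$
and note that $f\cdot h=\mu\circ(f\times h)\circ\Delta_X$ and $g\cdot h=\mu\circ(g\times h)\circ\Delta_X$, where $\Delta_X\colon X\to X\times X$ is the diagonal.

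First I would use the product inequality for the equivariant homotopic distance recorded just before Proposition~\ref{product}: since for any $G$-invariant $U\subseteq X$ with $f|_U\simeq_G g|_U$ one has $(f\times h)|_{U\times X}\simeq_G (g\times h)|_{U\times X}$, it follows directly that
$$
\D_G(f\times h,\,g\times h)\leq \D_G(f,g).
$$
Next, applying part~(1) of Proposition~\ref{prop:composition-inequalities} with the $G$-map $\mu\colon Y\times Y\to Y$ gives
$$
\D_G\bigl(\mu\circ(f\times h),\,\mu\circ(g\times h)\bigr)\leq \D_G(f\times h,\,g\times h).
$$
Finally, applying part~(2) of the same proposition with the $G$-map $\Delta_X\colon X\to X\times X$ yields
$$
\D_G\bigl(\mu\circ(f\times h)\circ\Delta_X,\,\mu\circ(g\times h)\circ\Delta_X\bigr)\leq \D_G\bigl(\mu\circ(f\times h),\,\mu\circ(g\times h)\bigr).
$$
Chaining these three inequalities and identifying the left-hand side with $\D_G(f\cdot h,g\cdot h)$ completes the argument.

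There is essentially no obstacle here: every ingredient is already available, and the only thing to verify carefully is the bookkeeping identity $f\cdot h=\mu\circ(f\times h)\circ\Delta_X$ (and likewise for $g\cdot h$), which is immediate from the definition $f\cdot h=\mu\circ(f,h)$ together with $(f,h)=(f\times h)\circ\Delta_X$. One could alternatively give a direct argument: if $\{U_i\}$ is a $G$-invariant open cover of $X$ with $G$-homotopies $H_i\colon U_i\times I\to Y$ from $f|_{U_i}$ to $g|_{U_i}$, then $\mu\circ(H_i,\,h|_{U_i}\circ \mathrm{pr})\colon U_i\times I\to Y$ is a $G$-homotopy from $(f\cdot h)|_{U_i}$ to $(g\cdot h)|_{U_i}$, which gives the bound at once; but the three-line reduction via Proposition~\ref{prop:composition-inequalities} is cleaner.
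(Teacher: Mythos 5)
Your proof is correct and takes essentially the same route as the paper: both use the decomposition $f\cdot h=\mu\circ(f\times h)\circ\Delta_X$, the elementary bound $\D_G(f\times h,g\times h)\leq\D_G(f,g)$, and then Proposition~\ref{prop:composition-inequalities} to absorb the pre-composition with $\Delta_X$ and post-composition with $\mu$.
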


\begin{proof}
It is known that $\D_G(f \times h, g \times h) \leq \D_G(f, g)$. Applying Proposition \ref{prop:composition-inequalities}, we obtain
$$
\begin{aligned}
\D_G(f \cdot h, g \cdot h) 
&= \D_G\big(\mu \circ (f \times h) \circ \Delta,\, \mu \circ (g \times h) \circ \Delta\big) \\
&\leq \D_G(f \times h, g \times h) \\
&\leq \D_G(f, g),
\end{aligned}
$$
as desired.
\end{proof}

\begin{corollary}\label{cor:grplike-space}
If $X$ is a  group-like $G$-space, then $\D_G(\mu ,\delta )=\D_G(id_X,u),$ \noindent where $\delta :X\times X\rightarrow X$ is the division $G$-map, and $u:X\rightarrow X$ represents the equivariant homotopy inverse.
\end{corollary}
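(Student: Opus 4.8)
The plan is to prove the two inequalities $\D_G(id_X,u)\le \D_G(\mu,\delta)$ and $\D_G(\mu,\delta)\le \D_G(id_X,u)$ separately, in each case realizing one pair of maps as a precomposition, respectively a postcomposition, of the other and then appealing to \Cref{prop:composition-inequalities}. Two elementary identifications drive everything. First, since for a group-like $G$-space the division map has the explicit form $\delta(x,y)=x^{-1}\cdot y=u(x)\cdot y$, we have the \emph{strict} equality $\delta=\mu\circ(u\times id_X)$, while trivially $\mu=\mu\circ(id_X\times id_X)$. Second, writing $i_1\colon X\to X\times X$, $i_1(x)=(x,x_0)$ (a $G$-map, since $x_0\in X^G$), the Hopf axioms give $\mu\circ i_1\simeq_G id_X$, and, using $(u\times id_X)\circ i_1=i_1\circ u$, also $\delta\circ i_1=\mu\circ i_1\circ u\simeq_G u$.

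For the inequality $\D_G(\mu,\delta)\le \D_G(id_X,u)$ I would apply part (1) of \Cref{prop:composition-inequalities} to postcomposition with $\mu$, obtaining
$$\D_G(\mu,\delta)=\D_G\bigl(\mu\circ(id_X\times id_X),\ \mu\circ(u\times id_X)\bigr)\le \D_G(id_X\times id_X,\ u\times id_X),$$
and then invoke the elementary product estimate $\D_G(f\times h,g\times h)\le \D_G(f,g)$ recorded just before \Cref{product} (with $f=id_X$, $g=u$, $h=id_X$) to conclude $\D_G(id_X\times id_X,u\times id_X)\le \D_G(id_X,u)$. An equivalent route is to note $\mu=pr_1^X\cdot pr_2^X$ and $\delta=(u\circ pr_1^X)\cdot pr_2^X$, apply \Cref{prop:multiplication-fib-spaces-ineq} to get $\D_G(\mu,\delta)\le \D_G(pr_1^X,u\circ pr_1^X)$, and then precompose with $pr_1^X$ via part (2) of \Cref{prop:composition-inequalities}.

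For the reverse inequality I would use part (2) of \Cref{prop:composition-inequalities} for precomposition with $i_1$: since $\D_G$ depends only on the $G$-homotopy classes of its two arguments, the identifications above yield
$$\D_G(id_X,u)=\D_G(\mu\circ i_1,\ \delta\circ i_1)\le \D_G(\mu,\delta).$$
Combining the two inequalities gives the asserted equality $\D_G(\mu,\delta)=\D_G(id_X,u)$.

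The argument is essentially formal, so I do not anticipate a genuine obstacle; the only points that merit a little care are the bookkeeping between pointed and unpointed $G$-homotopies (the Hopf-space homotopies are pointed, but these are in particular ordinary $G$-homotopies, which is all that $\D_G$ sees) and the observation that on a group-like $G$-space the division map is literally $x^{-1}\cdot y$, which is what makes $\delta=\mu\circ(u\times id_X)$ hold on the nose rather than merely up to $G$-homotopy.
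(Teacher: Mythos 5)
Your proposal is correct and takes essentially the same approach as the paper: both directions use \Cref{prop:composition-inequalities}, the inequality $\D_G(id_X,u)\le\D_G(\mu,\delta)$ is proved by precomposing with $i_1$ exactly as the paper does, and your ``equivalent route'' for $\D_G(\mu,\delta)\le\D_G(id_X,u)$ (via $\mu=pr_1^X\cdot pr_2^X$, $\delta=(u\circ pr_1^X)\cdot pr_2^X$, \Cref{prop:multiplication-fib-spaces-ineq}, and precomposition with $pr_1^X$) is word-for-word the paper's argument. Your primary route for that direction, using the strict identity $\delta=\mu\circ(u\times id_X)$ together with the product estimate $\D_G(f\times h,g\times h)\le\D_G(f,g)$, is only a cosmetic repackaging of the same idea.
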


\begin{proof}
Recall the pointed $G$-map $i_1:X \to X \times X$ defined by $i_1(x) = (x, x_0)$. Since $\mu \circ i_1 \simeq ^*_G id_X$ and $\delta \circ i_1 \simeq ^*_G u$, it follows that 
$$
\D_G(id_X, u) = \D_G(\mu \circ i_1, \delta \circ i_1) \leq \D_G(\mu, \delta).
$$

Conversely, observe that $\mu = pr_1^X \cdot pr_2^X = (id_X \circ pr_1^X) \cdot pr_2^X$ and $\delta = (u \circ pr_1^X) \cdot pr_2^X$. Therefore,
$$
\D_G(\mu, \delta) \leq \D_G(id_X \circ pr_1^X, u \circ pr_1^X) \leq \D_G(id_X, u). 
$$
\end{proof}

Now, assuming that $X$ is metrizable, we generalize the inequality stated in \Cref{prop:multiplication-fib-spaces-ineq}.

\begin{proposition}\label{prop:fib-mult-ineq}
Let $f,g,h,h':X\rightarrow Y$ be $G$-maps, where $Y$ is a Hopf $G$-space and $X$ is metrizable. Then
$\D_G(f\cdot h,g\cdot h')\leq \D_G(f,g) + \D_G(h,h').$
\end{proposition}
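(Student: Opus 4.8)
The plan is to deduce the estimate from the triangle inequality (Theorem~\ref{triang}) together with the one–sided estimate of Proposition~\ref{prop:multiplication-fib-spaces-ineq} and its mirror form. Since $X$ is metrizable, it is Hausdorff and $X\times X$ is again metrizable, hence completely normal; thus Theorem~\ref{triang} applies to the $G$-maps $f\cdot h,\ g\cdot h',\ g\cdot h:X\to Y$, and choosing $g\cdot h$ as the intermediate map gives
$$
\D_G(f\cdot h,\,g\cdot h')\ \leq\ \D_G(f\cdot h,\,g\cdot h)\ +\ \D_G(g\cdot h,\,g\cdot h').
$$

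\textbf{Bounding the two terms.} The first term is handled directly by Proposition~\ref{prop:multiplication-fib-spaces-ineq}, which yields $\D_G(f\cdot h,g\cdot h)\leq \D_G(f,g)$. For the second term, write $g\cdot h=\mu\circ(g\times h)\circ\Delta$ and $g\cdot h'=\mu\circ(g\times h')\circ\Delta$, where $\Delta:X\to X\times X$ is the diagonal $G$-map. By the (mirror of the) product inequality observed just before Proposition~\ref{product}, namely $\D_G(g\times h,g\times h')\leq \D_G(h,h')$, and by applying Proposition~\ref{prop:composition-inequalities} to post-composition with $\mu$ and pre-composition with $\Delta$, we obtain
$$
\D_G(g\cdot h,\,g\cdot h')\ \leq\ \D_G(g\times h,\,g\times h')\ \leq\ \D_G(h,h').
$$
Combining the two bounds gives $\D_G(f\cdot h,g\cdot h')\leq \D_G(f,g)+\D_G(h,h')$, as claimed.

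\textbf{Main point of care.} There is no serious obstacle here; the only item worth recording explicitly is the mirror version of Proposition~\ref{prop:multiplication-fib-spaces-ineq} (fixing the left factor and varying the right one), whose proof is verbatim the same as the one given there after interchanging the two factors. One should also make sure the metrizability hypothesis is invoked precisely where Theorem~\ref{triang} demands $X$ Hausdorff and $X\times X$ completely normal, since that is the only place the hypothesis on $X$ is used.
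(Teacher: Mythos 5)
Your proof is correct and uses the same ingredients as the paper's: the triangle inequality (Theorem~\ref{triang}), Proposition~\ref{prop:composition-inequalities}, and the basic product bound $\D_G(f\times h,g\times h)\leq \D_G(f,g)$ (together with its mirror). The only real difference is the order of operations: the paper first reduces $\D_G(f\cdot h,g\cdot h')$ to $\D_G(f\times h,g\times h')$ via the composition inequality and then applies the triangle inequality to the product maps $f\times h,\, g\times h,\, g\times h':X\times X\to Y\times Y$, whereas you apply the triangle inequality directly to $f\cdot h,\, g\cdot h,\, g\cdot h':X\to Y$ and then bound each term. Your ordering is marginally more economical in its use of the normality hypothesis (it only needs $X\times X$ completely normal rather than $(X\times X)\times(X\times X)$), but since $X$ metrizable covers both, the two derivations are substantively the same.
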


\begin{proof}
Using the triangle inequality, the fact $\D_G(f\times h,g\times h)\leq \D_G(f,g)$ and \Cref{prop:composition-inequalities}, we obtain
$$\begin{array}{ll}
\D_G(f\cdot h,g\cdot h')& = \D_G(\mu \circ (f\times h)\circ \Delta ,\mu \circ (g\times h')\circ \Delta )\\
& \leq \D_G(f\times h,g\times h')\\
&\leq \D_G(f\times h, g\times h)+ \D_G(g\times h, g\times h')\\
&\leq \D_G(f,g)+ \D_G(h,h'). 
\end{array}
$$
\end{proof}

\subsection{Hopf $G$-spheres}\label{Hopf G sphere}
We now apply our results to the case of spheres equipped with Hopf $G$-structures.
It is a well-known theorem of Adams that $S^n$ is a Hopf space if and only if $n = 0, 1, 3, 7$. 
Note that the Hopf structures on $S^1$, $S^3$, and $S^7$ are given by the multiplication in $\mathbb{C}$ (the complex numbers), $\mathbb{H}$ (the quaternions), and $\mathbb{O}$ (the octonions), respectively (see \cite{I}). 
The corresponding automorphism groups of $\mathbb{C}$, $\mathbb{H}$, and $\mathbb{O}$ are $\mathbb{Z}_2$, $\mathrm{SO}(3)$, and $G_2$, respectively. 
The action of these automorphism groups preserves the Hopf structures, making $S^1$, $S^3$, and $S^7$ into Hopf $G$-spaces, where $G$ is the corresponding automorphism group (see Proposition 2.1 of \cite{C-C}).

Note that the automorphism group of $\mathbb{C}$ is generated by the complex conjugation involution. 
Therefore, $(S^1)^G = \{1, -1\}$, and thus $S^1$ is not a $G$-connected Hopf $G$-space. 
Consequently, $\TC_{G}(S^1)=\infty$.

We now examine how $\mathrm{SO}(3)$ acts on $S^3$ as the automorphism group of the quaternions. 
Recall that
\[
S^3 = \{z = a + bi + cj + dk \mid \|z\| = 1 \},
\]
where $i, j, k$ are the imaginary quaternions satisfying $i^2=j^2=k^2=ijk=-1$. 
The imaginary quaternions $i, j, k$ span a vector space isomorphic to $\mathbb{R}^3$, denoted by $\mathrm{Im}(\mathbb{H})$. 
An element $v = b i + c j + d k$ can thus be viewed as a vector in $\mathrm{Im}(\mathbb{H}) \cong \mathbb{R}^3$.

Let $g \in \mathrm{SO}(3)$ and $z = a + v$. 
Then $g \cdot z = a + g(v)$, where $g(v)$ denotes the standard action of $\mathrm{SO}(3)$ on $\mathbb{R}^3$ via rotations. 
Clearly, under this action, the real part $a$ is fixed. 
In particular, $(S^3)^G = \{1, -1\}$, and hence $\TC_G(S^3)=\infty$.

However, we can consider subgroups $K \subseteq \mathrm{SO}(3)$ that may lead to a $K$-connected Hopf-$K$ space structure on $S^3$. 
Since the action of $\mathrm{SO}(3)$ on $\mathbb{H}$ is linear, the fixed set $\mathbb{H}^K$ for any closed subgroup $K$ of $\mathrm{SO}(3)$ is a real subspace. 
The corresponding induced action on $S^3$ then has fixed sets that are spheres $S^r$ for some $r \leq 3$.

For example, if $K=\mathrm{SO}(2)$ acts on $S^3$ via rotations fixing the real axis $i \cdot \mathbb{R}$, then
\[
(S^3)^{K}=\{a+b i 
\in \mathbb{H} \mid \|a+b i\|=1 \} \cong S^1.
\]
Since the action of any closed subgroup $H$ of $K$ is still linear and via rotations, we also have $(S^3)^H\cong S^1$. 
Therefore, in this case $S^3$ becomes a $K$-connected Hopf $K$-space. 
The principal orbit of this action is homeomorphic to $S^1$. 
Hence, using \cite[Theorem \RomanNumeralCaps 4.3.8]{Br1}, we have $\dim(S^3/K)=\dim(S^3)-1=2$. 
Then, by \Cref{cor: TCG equals catG} and \cite[Corollary 1.12]{Eqlscategory}, we obtain
\[
1 \leq \TC_{SO(2)}(S^3)=\ct_{SO(2)}(S^3) \leq 2.
\]
If $K'$ is any finite subgroup of $K=SO(2)$, then it follows from \cite[Proposition 3.1]{Z-K} that $\ct_{K'}(S^3)=1$. 
Thus, from \Cref{cor: TCG equals catG}, it follows that
\[
\TC_{K'}(S^3)=1.
\]

More generally, if $K$ is any positive-dimensional closed subgroup of $\mathrm{SO}(3)$ whose fixed point set in $\mathrm{Im}(\mathbb{H})$ is nonempty, then $S^3$ is $K$-connected. 
Due to the linearity of the action, the fixed set in this case is always homeomorphic to $S^1$. 
Therefore, again using \Cref{cor: TCG equals catG} and \cite[Corollary 1.12]{Eqlscategory},
\[
1 \leq \TC_{K}(S^3)=\ct_{K}(S^3) \leq 2.
\]

Similar to the quaternion case, the group $G_2 \subseteq \mathrm{SO}(7)$ acts on the octonions via its action on the purely imaginary octonions. 
We consider the octonion algebra $\mathbb{O}$ as $\mathbb{C}\oplus \mathbb{C}^3$, where the $\mathbb{C}^3$ part consists of imaginary octonions. 
Again, this action is linear and orthogonal, inducing an action on $S^7$ viewed as the unit octonions. 
Clearly, the fixed set $(S^7)^{G_2}\cong S^1$.

It is known that the special unitary group $\mathrm{SU}(3)$ embeds into $G_2$. 
We also have $(S^7)^{\mathrm{SU}(3)}\cong S^1$, since the $\mathrm{SU}(3)$ action on the imaginary octonions has no fixed points outside $\mathbb{C}$. 
Since this action is linear, the fixed sets are spheres $S^r$ with $r\geq 1$. 
Therefore, $S^7$ is $\mathrm{SU}(3)$-connected. 
Moreover, it can be observed that $\mathrm{SU}(3)$ acts transitively on the unit sphere in the imaginary octonions. 
Therefore, the principal orbit of an imaginary point in $S^7$ is $S^5 \subseteq \mathbb{C}^3$, giving $\dim(S^7/\mathrm{SU}(3))=7-5=2$. 
Hence, using \Cref{cor: TCG equals catG} and \cite[Corollary 1.12]{Eqlscategory},
\[
1 \leq \TC_{\mathrm{SU}(3)}(S^7)=\ct_{\mathrm{SU}(3)}(S^7) \leq 2.
\]

Finally, if $G$ is any finite subgroup of $SU(3)$ acting on $S^7$, then $S^7$ is $G$-connected. 
By \cite[Proposition 3.1]{Z-K}, we have $\ct_{G}(S^7)=1$. 
Then, from \Cref{cor: TCG equals catG}, it follows that
\[
\TC_{G}(S^7)=1.
\]

\section{Equivariant fibrations}\label{sec: equivariant fibrations}
In this section, we estimate the equivariant homotopic distance between fibre-preserving $G$-maps of $ G$-fibrations in terms of the induced distance on the fibres and the equivariant Lusternik–Schnirelmann category of the base. 

Suppose $p:E\to B$ and $p':E'\to B'$ are $G$-fibrations, with $B$ and $B'$ $G$-connected, and let $f,g:E\to E'$ and $\bar f,\bar g:B\to B'$ be $G$-maps satisfying $p'\circ f=\bar f\circ p$ and $p'\circ g=\bar g\circ p$. This gives the commutative diagram
\begin{equation}\label{dia:comm-diagram-G-fibration}
\xymatrix{
E \ar[d]_{p} \ar[rr]^{f,g} && E' \ar[d]^{p'} \\
B \ar[rr]_{\bar f,\bar g} && B'.}
\end{equation}
Choose $b_0\in B^G$ with $\bar f(b_0)=\bar g(b_0)=b'_0\in B'^G$. Then $f$ and $g$ restrict to $G$-maps $f_0,g_0:F\to F'$, where $F=p^{-1}(b_0)$ and $F'=(p')^{-1}(b'_0)$.  

\begin{theorem}\label{thm: G-fibration}
For $f,g:E\to E'$ as above,
\[
D_G(f,g)+1 \;\leq\; (D_G(f_0,g_0)+1)\,(\ct_G(B)+1).
\]
\end{theorem}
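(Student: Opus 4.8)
The plan is to mimic the classical fibrewise argument of Farber--Grant (for $\TC$ of fibrations) and of García-Calcines, now in the equivariant, homotopic-distance setting. The starting point is to pull back along $p$: from \Cref{thm: DG equals secat} we know that $D_G(f,g)=\sct_G(\overline\pi_{E'})$, where $\overline\pi_{E'}\colon \mathcal P(f,g)\to E$ is obtained by pulling back the free $G$-path fibration $\pi_{E'}\colon (E')^I\to E'\times E'$ along $(f,g)\colon E\to E'\times E'$; and similarly $D_G(f_0,g_0)=\sct_G(\overline\pi_{F'})$ for the analogous pullback over the fibre $F$. The idea is to produce a $G$-cover of $E$ of the required size by combining a $G$-categorical cover $\{V_0,\dots,V_m\}$ of $B$ (with $m=\ct_G(B)$) with a cover witnessing $\sct_G(\overline\pi_{F'})=D_G(f_0,g_0)$.

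First I would use the $G$-fibration $p$ to transport the structure over each $V_j$. Since $B$ is $G$-connected with $b_0\in B^G$, each $G$-categorical $V_j$ admits a $G$-homotopy $\mathrm{inc}_{V_j}\simeq_G c_{b_0}$ (using \cite[Lemma 3.14]{EqTC} to replace an arbitrary orbit value by the fixed point $b_0$). Applying the $G$-homotopy lifting property of $p$ to this $G$-homotopy, starting from $\mathrm{id}$ on $p^{-1}(V_j)$, I obtain a fibrewise $G$-deformation of $p^{-1}(V_j)$ onto $F=p^{-1}(b_0)$; concretely, a $G$-map $r_j\colon p^{-1}(V_j)\to F$ together with a $G$-homotopy $p^{-1}(V_j)\times I\to E$ from the inclusion to $\iota_F\circ r_j$, where $\iota_F\colon F\hookrightarrow E$. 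This is the equivariant analogue of the fact that a fibration restricted to a contractible open set is fibre-homotopy trivial, and it is where the hypothesis that $B$ is $G$-connected and $b_0\in B^G$ is used. The $G$-maps $f$ and $g$ restricted to $p^{-1}(V_j)$ are then $G$-homotopic to $f\circ\iota_F\circ r_j$ and $g\circ\iota_F\circ r_j$ respectively; but $f\circ \iota_F = \iota_{F'}\circ f_0$ up to the identification of fibres over $b_0'$ — here one must be a little careful, since $f$ does not send $F$ into $F'$ on the nose unless $\bar f(b_0)=b_0'$, which we assumed. Thus on $p^{-1}(V_j)$, comparing $f$ and $g$ reduces to comparing $f_0\circ r_j$ with $g_0\circ r_j$, i.e.\ (after composing the maps $F'\hookrightarrow E'$ which we can drop up to $G$-homotopy only along $F'$-valued homotopies) to the homotopic distance $D_G(f_0,g_0)$.

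Concretely, let $\{W_0,\dots,W_k\}$ be a $G$-invariant open cover of $F$ with $k=D_G(f_0,g_0)$ and $f_0|_{W_i}\simeq_G g_0|_{W_i}$ in $F'$. For each pair $(i,j)$ set $U_{ij}:=r_j^{-1}(W_i)\subseteq p^{-1}(V_j)\subseteq E$; these are $G$-invariant open subsets of $E$, and as $(i,j)$ ranges over $\{0,\dots,k\}\times\{0,\dots,m\}$ they cover $E$ because the $W_i$ cover $F$, the $V_j$ cover $B$, and $p^{-1}(V_j)=\bigcup_i r_j^{-1}(W_i)$. On each $U_{ij}$ one has
$$
f|_{U_{ij}}\;\simeq_G\; \iota_{F'}\circ f_0\circ r_j|_{U_{ij}}\;\simeq_G\; \iota_{F'}\circ g_0\circ r_j|_{U_{ij}}\;\simeq_G\; g|_{U_{ij}},
$$
where the outer $G$-homotopies come from the fibrewise $G$-deformation above (post-composed with $f$, resp.\ $g$), and the middle one is the restriction of the $F'$-valued $G$-homotopy $f_0|_{W_i}\simeq_G g_0|_{W_i}$ precomposed with $r_j$. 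Hence $E$ is covered by $(k+1)(m+1)$ $G$-invariant open sets on which $f$ and $g$ are $G$-homotopic, giving $D_G(f,g)+1\le (D_G(f_0,g_0)+1)(\ct_G(B)+1)$.

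The main obstacle I expect is the bookkeeping around the identification of fibres and the honest verification that $f$ carries the $G$-deformation over $V_j$ compatibly, i.e.\ that $f\circ\iota_F$ and $\iota_{F'}\circ f_0$ literally agree and that all the intermediate homotopies are genuinely $G$-equivariant with image inside $E'$ (not just inside $E'$ up to wandering out of $F'$). One clean way to handle this, and the route I would actually take, is to work directly with the pulled-back path-space fibrations: lift the $G$-contraction of $V_j$ to a fibrewise $G$-deformation of $\overline\pi_{E'}^{-1}(p^{-1}(V_j))$ over $p^{-1}(V_j)$ and relate it to $\overline\pi_{F'}$ via the map of pullback squares induced by $\iota_F,\iota_{F'}$, then invoke \Cref{ineq} and the fact (as in \Cref{prop: eq hd leq catG tcG}) that $\sct_G$ of a $G$-fibration is bounded above over each piece of a cover. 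This avoids point-set fuss and keeps everything inside the world of $G$-fibrations and \Cref{ineq,thm: DG equals secat} already established in the paper.
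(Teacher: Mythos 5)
Your proposal is correct and takes essentially the same approach as the paper: the paper also lifts a $G$-contraction of each $G$-categorical set $U_i\subseteq B$ via the $G$-homotopy lifting property to obtain a time-one map $\tilde{H}_{i,1}\colon p^{-1}(U_i)\to F$ (your $r_j$), pulls back the fibre cover along it to form the sets $W_{i,j}=\tilde{H}_{i,1}^{-1}(V_j)$, and then glues the three $G$-homotopies you describe into a single explicit concatenation $K_{i,j}$. Your concerns in the final paragraph about ``bookkeeping around the identification of fibres'' are unfounded: the hypothesis $\bar f(b_0)=\bar g(b_0)=b_0'$ (which the paper makes explicit) guarantees $f\circ\iota_F=\iota_{F'}\circ f_0$ on the nose, so the direct cover-and-concatenated-homotopy argument you sketch goes through without needing to reroute through pulled-back path-space fibrations.
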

\begin{proof}
The proof is similar to that of \cite[Theorem 6.1]{macias2022homotopic}.   
\end{proof}

We now turn to applications of \Cref{thm: G-fibration}.  
For a fibration $p:E\to B$ with fibre $F$, Varadarajan \cite{V} established the classical inequality
\begin{equation}\label{eq:Varadarajan-cat-ineq}
    \ct(E)+1\;\leq\;(\ct(F)+1)(\ct(B)+1),
\end{equation}
relating the Lusternik–Schnirelmann category of the total space, the fibre, and the base.  
As a direct application of \Cref{thm: G-fibration}, we obtain the following equivariant analogue.

\begin{proposition}
Let $p:E\to B$ be a $G$-fibration with $B$ $G$-connected. Suppose $E^G\neq \emptyset$ and let $F=p^{-1}(b_0)$, where $b_0=p(e_0)$ for some $e_0\in E^G$. If $F$ is $G$-connected, then
\[
\ct_G(E)+1 \;\leq\; (\ct_G(F)+1)(\ct_G(B)+1).
\]
\end{proposition}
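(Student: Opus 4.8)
The plan is to obtain this as a direct corollary of \Cref{thm: G-fibration}, applied to a judiciously chosen fibre-preserving pair of $G$-maps. Specifically, I would run the setup of diagram~\eqref{dia:comm-diagram-G-fibration} with $p' = p$ (so that $E' = E$ and $B' = B$), taking $f = id_E$, $g = c_{e_0}$, $\bar f = id_B$ and $\bar g = c_{b_0}$, where $e_0 \in E^G$ and $b_0 = p(e_0)$. All four maps are $G$-maps: the identities trivially, and the constants because $e_0 \in E^G$ forces $b_0 = p(e_0) \in B^G$. The two squares commute since $p \circ id_E = id_B \circ p$ and $p \circ c_{e_0} = c_{b_0} \circ p$, and $B = B'$ is $G$-connected as the theorem requires. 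Because $\bar f(b_0) = \bar g(b_0) = b_0$, the point $b_0$ plays the role of $b'_0$, the target fibre $F' = (p')^{-1}(b'_0)$ coincides with $F = p^{-1}(b_0)$, and the induced maps on the fibres are $f_0 = f|_F = id_F$ and $g_0 = g|_F = c_{e_0}\colon F \to F$, with $e_0 \in F^G$.

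With these choices, \Cref{thm: G-fibration} yields
$$D_G(id_E, c_{e_0}) + 1 \;\le\; \big(D_G(id_F, c_{e_0}) + 1\big)\cdot\big(\ct_G(B)+1\big).$$
The second step is to reinterpret both homotopic distances as equivariant LS categories. Since $F$ is $G$-connected and $e_0 \in F^G$, \Cref{ct-mor-car} gives $D_G(id_F, c_{e_0}) = \ct_G(id_F) = \ct_G(F)$. On the total-space side I would use only the inequality $\ct_G(E) = \ct_G(id_E) \le D_G(id_E, c_{e_0})$ provided by \Cref{rem-ct-mor-car}(1), which holds whenever the target has a $G$-fixed point and in particular does not require $E$ to be $G$-connected. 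Chaining these,
$$\ct_G(E)+1 \;\le\; D_G(id_E, c_{e_0})+1 \;\le\; \big(\ct_G(F)+1\big)\cdot\big(\ct_G(B)+1\big),$$
which is the claimed inequality, an equivariant analogue of Varadarajan's bound~\eqref{eq:Varadarajan-cat-ineq}.

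I do not anticipate a serious obstacle. The point demanding care is that \Cref{thm: G-fibration} is invoked with the single fibration $p$ serving as both $p$ and $p'$, so one must check that the specialized diagram genuinely meets that theorem's hypotheses---most notably $G$-connectedness of the base appearing in the $B'$-slot---and that the restrictions to the fibre are exactly $id_F$ and the constant at $e_0$. The only mildly delicate modelling choice is to invoke the one-sided estimate of \Cref{rem-ct-mor-car}(1) rather than the equality of \Cref{ct-mor-car} on the total space; although one would expect $E$ to be $G$-connected in this situation (its fixed point sets fibre over the $G$-connected base with $G$-connected fibre), it is cleaner not to have to establish this.
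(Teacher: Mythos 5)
Your proof is correct and is essentially identical to the paper's: the same specialization $E'=E$, $B'=B$, $f=id_E$, $g=c_{e_0}$, $\bar f=id_B$, $\bar g=c_{b_0}$ of \Cref{thm: G-fibration}, followed by converting $D_G(id_F,c_{e_0})=\ct_G(F)$ via \Cref{ct-mor-car} and $\ct_G(E)\le D_G(id_E,c_{e_0})$ via \Cref{rem-ct-mor-car}(1). You merely spell out the hypothesis checks and the reason for invoking the one-sided remark rather than the equality on the total-space side, both of which the paper leaves implicit.
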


\begin{proof}
Consider diagram \eqref{dia:comm-diagram-G-fibration} with $E'=E$, $B'=B$, $f=id_E$, $g=c_{e_0}$, $\bar f=id_B$, and $\bar g=c_{b_0}$.  
By \Cref{thm: G-fibration},
\[
D_G(id_E,c_{e_0})+1 \;\leq\; \bigl(D_G(id_F,c_{e_0})+1\bigr)\,(\ct_G(B)+1).
\]
Since $D_G(id_E,c_{e_0})=\ct_G(E)$ and $D_G(id_F,c_{e_0})=\ct_G(F)$ (by \Cref{ct-mor-car} and Remark~\ref{rem-ct-mor-car}(1)), the claim follows.
\end{proof}

Turning to topological complexity, Farber and Grant \cite[Lemma~7]{F-G} proved the inequality
\begin{equation}\label{eq:Farber-Grant-inequality}
    \TC(E)+1 \;\leq\; (\TC(F)+1)(\ct(B\times B)+1),
\end{equation}
relating the complexity of the total space and the fibre with the category of $B\times B$.  
In analogy with the equivariant version of Varadarajan’s inequality, we now obtain the following result.

\begin{proposition}
Let $p:E\to B$ be a $G$-fibration with $B$ $G$-connected and $B^G\neq \emptyset$.  
If $F=p^{-1}(b_0)$ for some $b_0\in B^G$, then
\[
\TC_G(E)+1 \;\leq\; (\TC_G(F)+1)(\ct_G(B\times B)+1),
\]
where $G$ acts diagonally on $B\times B$.
\end{proposition}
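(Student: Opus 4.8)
The plan is to deduce this statement directly from \Cref{thm: G-fibration} applied to the product $G$-fibration $p\times p\colon E\times E\to B\times B$, combined with the identification $\TC_G(-)=\D_G(pr_1^{(-)},pr_2^{(-)})$ from \Cref{cor: DG equals TCG}.

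First I would record the auxiliary facts needed to set up the hypotheses of \Cref{thm: G-fibration}. By \Cref{sub}, the product map $p\times p\colon E\times E\to B\times B$ is a $G$-fibration for the diagonal $G$-action. The base $B\times B$ is $G$-connected: for every closed subgroup $H\leq G$ one has $(B\times B)^H=B^H\times B^H$, which is path-connected since $B^H$ is; moreover $(b_0,b_0)\in (B\times B)^G$, so $(B\times B)^G\neq\emptyset$. Next I would fit the data into the commutative square \eqref{dia:comm-diagram-G-fibration}: take the $G$-fibrations to be $p\times p$ and $p$, the upper $G$-maps to be $f=pr_1^E$ and $g=pr_2^E$ from $E\times E$ to $E$, and the lower $G$-maps to be $\bar f=pr_1^B$ and $\bar g=pr_2^B$ from $B\times B$ to $B$. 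The compatibility conditions $p\circ pr_i^E=pr_i^B\circ(p\times p)$ hold on the nose for $i=1,2$, so the relevant square commutes strictly.

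Then I would identify the fibre data. With base point $(b_0,b_0)\in(B\times B)^G$ we have $\bar f(b_0,b_0)=\bar g(b_0,b_0)=b_0$, and the fibre of $p\times p$ over $(b_0,b_0)$ is $(p\times p)^{-1}(b_0,b_0)=p^{-1}(b_0)\times p^{-1}(b_0)=F\times F$. The restrictions of $f$ and $g$ to this fibre are exactly the projections $pr_1^F,pr_2^F\colon F\times F\to F$. Applying \Cref{thm: G-fibration} now gives
$$\D_G(pr_1^E,pr_2^E)+1\leq\bigl(\D_G(pr_1^F,pr_2^F)+1\bigr)\cdot\bigl(\ct_G(B\times B)+1\bigr).$$
By \Cref{cor: DG equals TCG}, the left-hand side equals $\TC_G(E)+1$ and the first factor on the right equals $\TC_G(F)+1$, which yields the claimed inequality.

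I do not anticipate a serious obstacle here: the argument is essentially a bookkeeping exercise, and the only points requiring a word of justification are that the product of $G$-fibrations is again a $G$-fibration (\Cref{sub}), that $G$-connectedness is inherited by the product under the diagonal action, and that the fibre of $p\times p$ and the restricted projections are what one expects. If anything needs care, it is checking that the strict commutativity of \eqref{dia:comm-diagram-G-fibration} genuinely holds for the chosen projection maps (so that \Cref{thm: G-fibration} applies verbatim), which it does.
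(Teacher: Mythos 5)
Your proposal is correct and matches the paper's own proof essentially step for step: both set up the commutative square with $p\times p$ over $B\times B$ mapping to $p$ over $B$ via the projection $G$-maps, verify that $B\times B$ is $G$-connected with nonempty $G$-fixed set, identify the fibre of $p\times p$ as $F\times F$ with the restricted maps being $pr_1^F,pr_2^F$, apply \Cref{thm: G-fibration}, and translate via \Cref{cor: DG equals TCG}. The only difference is that you explicitly cite \Cref{sub} for the fact that $p\times p$ is a $G$-fibration, which the paper leaves implicit.
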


\begin{proof}
Consider the commutative diagram
\[
\xymatrix{
E\times E \ar[d]_{p \times p} \ar[rr]^{pr_1^E,\,pr_2^E} && E \ar[d]^{p} \\
B\times B \ar[rr]_{pr_1^B,\,pr_2^B} && B,
}
\]
analogous to \eqref{dia:comm-diagram-G-fibration}.  
Since $(B\times B)^H = B^H\times B^H$ for any closed subgroup $H\leq G$, and $B$ is $G$-connected, the product $B\times B$ is also $G$-connected; moreover, $(B\times B)^G = B^G\times B^G \neq \emptyset$.  

Applying \Cref{thm: G-fibration} yields
\[
D_G(pr_1^E,pr_2^E)+1 \;\leq\; \bigl(D_G(pr_1^F,pr_2^F)+1\bigr)(\ct_G(B\times B)+1).
\]
The result follows from \Cref{cor: DG equals TCG}.
\end{proof}

\noindent \textbf{Acknowledgements.}  
The authors gratefully acknowledge the support of the DST–INSPIRE Faculty Fellowship (Faculty Registration No.~IFA24-MA218), Department of Science and Technology, Government of India, and of the Spanish Government under grant PID2023-149804NB-I00.

\bibliographystyle{plain} 
\bibliography{references}

\end{document}